\documentclass[12pt]{article}
\usepackage[utf8]{inputenc}
\usepackage{textcomp}
\usepackage{graphicx}
\usepackage{multirow}
\usepackage{amssymb}
\usepackage{amsmath}
\usepackage{amsthm}
\usepackage{amssymb}
\usepackage{amsfonts}
\usepackage{a4wide}
\usepackage{tikz}
\usepackage{circuitikz}
\usepackage{dsfont}
\usepackage{graphicx} 
\usepackage{color}
\usepackage{yhmath}

\usetikzlibrary{shapes,matrix,arrows,arrows.meta}
\tikzstyle{line}=[draw,-latex']
\tikzstyle{line2}=[-latex']
\begin{document}
\newcommand\mathplus{+}

\newtheorem{defi}{Definition}
\newtheorem{pr}{Proposition}
\newtheorem{Th}{Theorem}
\newtheorem{obs}{Observation}
\newtheorem{rrm}[obs]{Remark}
\newtheorem{cor}[obs]{Corollary}
\newtheorem{lema}[pr]{Lemma}
\newtheorem{ex}{Example}


 \title{Bicovariant Codifferential Calculi}
 \author{Andrzej Borowiec \footnote{andrzej.borowiec@uwr.edu.pl}, Patryk Mieszkalski \footnote{patryk.mieszkalski@uwr.edu.pl}\bigskip\\
 	University of Wroc{\l}aw, Institute of Theoretical Physics, \\
 	pl. M. Borna 9, 50-204 Wroc{\l}aw, Poland}
 \date{}

\maketitle

\begin{abstract}
	 	We develop a technique for studying first-order codifferential calculi (FOCCs) initiated by Doi and Quillen in the context of cyclic cohomology.  Their classification, for a given coalgebra,  reduces to the classification of subbicomodules in the universal bicomodule. For completing this task, the role of one-dimensional generating spaces (a.k.a. singletons) is found to be useful. We are particularly interested in classifying bicovariant codifferential calculi,  which we define over Hopf algebras. This, in turn, can be reduced to classifying Yetter-Drinfeld  (Y-D) submodules. In fact, there are two, mutually dual, Y-D structures on arbitrary Hopf algebra: one used by Woronowicz for constructing bicovariant differential calculi, and the another used here for FOCCs and shown to be related with Woronowicz construction of quantum tangent space. This  argues that such codifferential calculi are better suited to Drinfeld-Jimbo type quantized enveloping algebras, as they are dual to Woronowicz' bicovariant calculi over matrix quantum groups.  Relations with quantum Lie algebras and quantum vector fields are also shown. Some classification results are presented in numerous examples.
\end{abstract}
\tableofcontents
\section{Introduction}

  Noncommutative Differential Geometry (NCDG) has attracted considerable attention in recent decades, applying operator algebra to quantum geometry  \cite{Connes}. As a first step it appeared in the context of cyclic cohomology, following Connes' paper, in the form of universal differential calculus over some (usually noncommutative) algebra $\mathcal{A}$. 
  In fact, algebraic differential calculus becomes  one of the cornerstones of  NCDG, present there from the very beginning in various contexts. It consists of a First Order Noncommutative Differential Calculus (FODC) and its prolongation to a graded differential algebra. Thus, any  FODC  over  $\mathcal{A}$ can then be obtained as a result of the quotient construction. Its prolongation to the higher order (quantum de Rham complex) is functorial. Woronowicz Hopf algebra bicovariant differential calculi provided a new impetus for this development \cite{Woronowicz}. In particular, they become very useful as a tool for investigations of deformed symmetries in terms of Quantum Groups (QG). Following his paper, many examples of  bicovariant/covariant calculi on QG/quantum planes have been elaborated around this time, see, e.g. \cite{Klimyk} Part IV, and references therein, as well as \cite{BeggsMajid}.  More precisely, there are two types of QGs obtained in the process of quantum deformations from their classical analogs: Drinfeld's and Drinfeld-Jimbo quantized enveloping (Lie) algebras \cite{Drinfeld,Jimbo} and their dual counterparts, the so-called matrix QG introduced by Woronowicz \cite{Woronowicz2} on the level of $C^*$ algebras\footnote{Purely algebraic version was introduced later \cite{FRT}.}. 

We recall that the notions of coalgebras and comodules appeared as a formal mutual dual to the notion of algebras and modules, i.e., obtained by reversing arrows in the corresponding categorical formal definitions of those objects.
Similarly, the formal dual of differential calculus over algebras generates codifferential calculus over coalgebras, introduced by Doi and later by Quillen \cite{Doi81,Quillen1988}.
 In fact, except for the finite-dimensional cases, a real dualization works in the opposite direction: having a coalgebra, a bicomodule, and a coderivation one can construct an algebra, a bimodule, and a differential related to the initial objects by dualization. Unfortunately, there is no vast literature on coderivations, with a few exceptions, e.g., \cite{Khalkhali1996,AB99,Anel,Positelski2023}.  Main aim of the present paper is to study the bicovariant version of codifferential calculus over Hopf algebras, which are self-dual objects.
 Although both bicovariant FODDs and FOCCs can be realized on an arbitrary Hopf algebra, in the case of QGs we see essential differences at least from the perspective of classical limit and   possible physical applications. As demonstrated in subsection  4.2  appendices C-there are two mutually dual Y-D structures  on arbitrary Hopf algebra.  
 One of them, we call Woronowicz' type has been used in the celebrated paper \cite{Woronowicz} for constructing bicovariant differential calculi on matrix QGs.  The second one, which is used here to construct bicovariant FOCC, is also used in \cite{Woronowicz} to construct a quantum tangent space and the corresponding quantum Lie algebra. This strongly suggests that the  bicovariant differential calculi are better suited for matrix QGs since they are quantum counterparts of the ordinary differential calculi on matrix group manifolds.

 NCG and noncommutative differential calculi have long-lasting and well-established   applications in physical models related to quantum gravity phenomenology and Planck scale physics, see e.g.  \cite{QGphenomenology} and references therein. In particular, the so-called $\kappa$-deformation \cite{kappa,Lukierski2017} provides ones of the best studied models in this context.
 It is also worth noting that coderivations have recently appeared in the context of $L_\infty$ algebras and generalized geometry  \cite{coderJonke,coderOkava,coderCabus}.

\subsection{Notational conventions }
 
If not specified otherwise, all algebras are unital and associative, and coalgebras are counital and coassociative over a field $\mathbb{K}$ of characteristic $0$, which in most cases would be the complex numbers (we will not specify it if not needed).  
\footnote{Advised references to this part and Appendix A, as well as the formalism used throughout,  are, e.g. \cite{Sweedler69,Montgomery_book,BrzezinskiWisbauer,Radford}.} Undorned tensor product $\otimes$ is over $\mathbb{K}$. In most cases, we will use $\mathcal{A}$ as a symbol for algebra with product $\mu:~\mathcal{A}\otimes\mathcal{A}\to\mathcal{A}$ and unit $\eta:~\mathbb{K}\to\mathcal{A}$. Coalgebras will be denoted by $\mathcal{C}$ with coproduct $\Delta:~\mathcal{C}\to\mathcal{C}\otimes\mathcal{C}$ and counit $\varepsilon:~\mathcal{C}\to\mathbb{K}$. 
Hopf algebras will be denoted by $H$, and for antipode we will use the $S$ symbol.  

For a given algebra $\mathcal{A}$ we denote by $\mathfrak{M}_\mathcal{A}$, ${}_\mathcal{A} \mathfrak{M}$,
${}_\mathcal{A} \mathfrak{M}_\mathcal{A}$ the corresponding category of right/left or bimodules. For right/left or bicomodules  over a coalgebra  $\mathcal{C}$ we shall use similar notation: $\mathfrak{M}^\mathcal{C}$  
${}^\mathcal{C} \mathfrak{M}$, ${}^\mathcal{C} \mathfrak{M}^\mathcal{C}$. 
In the case of Hopf  algebra $H$ some other combinations will be later also  possible, e.g. ${}_H^H \mathfrak{M}^H_H$.

In particular, we can take $C^*=Hom(\mathcal{C},\mathbb{K})$, which is an algebra of linear functionals on $\mathcal{C}$, and replace left/right/bicomodules over $\mathcal{C}$ by right/left/bimodules over $\mathcal{C}^*$,
see Appendix A.

Left free module for algebra $\mathcal{A}$ is isomorphic to: $\mathcal{A}\otimes V$, where $V$ is a vector space. The left action can be defined:
\begin{eqnarray}
    \mu_L(a_1,a_2^i\otimes v_i)&:=&(a_1a_2^i)\otimes v_i.
\end{eqnarray}

Analogously, Left free comodule for coalgebra $\mathcal{C}$ is isomorphic to: $\mathcal{C}\otimes V$. The left coaction then is defined:
\begin{eqnarray}
    \Delta_L(b^i\otimes v_i)&:=&(b^i)_{(1)}\otimes\left((b^i)_{(2)}\otimes v_i\right).
\end{eqnarray}
We are using Sweedler notation - for every element $a$ in coalgerba $\mathcal{C}=(\mathcal{C}, \Delta,\varepsilon)$:
\begin{eqnarray}
    \Delta^\mathcal{C}(a)=\Delta(a)&=&a_{(1)}\otimes a_{(2)}\,,\quad a=\varepsilon(a_{(1)}) a_{(2)}=a_{(1)}\varepsilon(a_{(2)})\\
    \Delta^2(a)&:=&(\Delta\otimes\mathrm{id})\circ\Delta=a_{(1)}\otimes a_{(2)}\otimes a_{(3)}.
\end{eqnarray}
 For element $m$ in a left/right $\mathcal{C}$ comodule $M=(M, \Delta_{L/R})$ we shall modify this notation:
\begin{eqnarray}
    \Delta_L(m)&=&m_{(-1)}\otimes m_{<0>}\,,\quad m=\varepsilon(m_{(-1)})m_{<0>}\\
    \Delta^2_L(m):=(\mathrm{id}\otimes\Delta_L)\circ\Delta_L(m)&=&(\Delta\otimes\mathrm{id})\circ\Delta_L(m)=m_{(-2)}\otimes m_{(-1)}\otimes m_{<0>}\,,\\
    \Delta_R(m)&=&m_{<0>}\otimes m_{(1)}\,,\quad m=m_{<0>}\varepsilon (m_{(1)})\\
    \Delta_R^2(m):=(\Delta_R\otimes\mathrm{id})\circ\Delta_R(m)&=&(\mathrm{id}\otimes\Delta)\circ\Delta_R(m)=m_{<0>}\otimes m_{(1)}\otimes m_{(2)}\,.\end{eqnarray}
Elements from the comodule will always have sharp brackets, and elements from algebra will have the standard brackets. 

In this paper, we mostly deal with bicomodules. For a bicomodule $M\in {}^\mathcal{C} \mathfrak{M}^\mathcal{D}$ it is convenient to use the bicomodule structure map 
\begin{eqnarray}\label{bcoms1} 
   {}_L \Delta_{R}(m)\doteq(\Delta_L\otimes\mathrm{id})\circ\Delta_R(m)={}_R \Delta_{ L}(m)=(\mathrm{id}\otimes\Delta_R)\circ\Delta_L(m)=m_{(-1)}\otimes m_{<0>}\otimes m_{(1)}\,,
\end{eqnarray}
which is injective ${}_L \Delta_{R}: M\rightarrow \mathcal{C}\otimes M\otimes \mathcal{D}$. It  satisfies the obvious relation
\begin{eqnarray}\label{bcoms2}   
  (\mathrm{id}\otimes{}_L \Delta_{R}\otimes\mathrm{id})\circ {}_L \Delta_{R}&=& (\Delta_\mathcal{C}\otimes\mathrm{id}\otimes\Delta_\mathcal{D})\circ {}_L \Delta_{R} \,.
\end{eqnarray}

The following lemma  is well-known (see e.g., \cite{Doi81,Quillen1988}) and will be used throughout:
 \begin{lema} \label{lemma1}Assume $M$ is    A) a left $\mathcal{C}$-comodule, B) a right $\mathcal{D}$-comodule, C) a $\mathcal{C}-\mathcal{D}$ bicomodule, and $V$ is a linear space.
 \begin{itemize}
       \item 
   There is a one-to-one correspondence, i.e., an isomorphism of linear spaces, between  the space of all linear maps $Hom(M,V)$ and the following spaces: \\ A) the space of all left-comodule maps $Com^{\mathcal{C}, -}(M, \mathcal{C}\otimes V)$,\\ B) the space of all right-comodule maps $Com^{-, \mathcal{D}}(M,   V\otimes\mathcal{D})$,\\
C) the space of all bicomodule maps $Com^{\mathcal{C}, \mathcal{D}}(M, \mathcal{C}\otimes V\otimes\mathcal{D})$,
\item Moreover\\
    A') If $V$ is also  left $\mathcal{C}$-comodule then
   $f\in Com^{\mathcal{C}, -}(M,V)\subset Hom(M,V)$ iff 
   \begin{eqnarray}
   	{}\Delta^V_{L}\circ f&=&(\mathrm{id}_\mathcal{C}\otimes f)\circ\, \Delta^M_{L}\doteq\tilde f_L\,.\nonumber
   \end{eqnarray}
    B') If $V$ is also right  $\mathcal{D}$-comodule then
   $f\in Com^{-, \mathcal{D}}(M,V)\subset Hom(M,V)$ iff 
   \begin{eqnarray}
   	\Delta^V_{R}\circ f&=&(f\otimes\mathrm{id}_\mathcal{D})\circ\,\Delta^M_{R}\doteq\tilde f_R\,.\nonumber
   \end{eqnarray}
 C') If $V$ is also  $\mathcal{C}-\mathcal{D}$ bicomodule then
   $f\in Com^{\mathcal{C}, \mathcal{D}}(M,V)\subset Hom(M,V)$ iff 
   \begin{eqnarray}
   	  {}_L \Delta^V_{R}\circ f&=&(\mathrm{id}_\mathcal{C}\otimes f\otimes\mathrm{id}_\mathcal{D})\circ\, {}_L \Delta^M_{R}\doteq\tilde f\,.\nonumber
   \end{eqnarray}
 \end{itemize}
 \begin{proof} It is clear that bicomodules have both left and right comodule structure that commute. We show only the case C).
For any $f\in Hom(M,V)$ we define $\tilde f\in Com^{\mathcal{C}\mathcal{D}}(M, \mathcal{C}\otimes V\otimes\mathcal{D})$, where  $\mathcal{C}\otimes V\otimes\mathcal{D}$ is the (bi)free bicomodule, by $\tilde f=(\mathrm{id}_\mathcal{C}\otimes f\otimes\mathrm{id}_\mathcal{D})\circ \, {}_L \Delta^M_{R}$. The inverse map is $f=(\varepsilon_\mathcal{C}\otimes \mathrm{id}_M\otimes\varepsilon_\mathcal{D})\circ\tilde f$. 
The second part says that $f$ is a bicomodule morphism.
      Particularly, taking $V=M$ and $f=\mathrm{id}_M$, the last condition is automatically satisfied and $\widetilde{\mathrm{id}_M}={}_L \Delta^M_{R}$.
 \end{proof}
\end{lema}

\section{Mathematical formalism of codifferential calculus}
\subsection{Basic definitions and properties}

We recall \cite{Doi81,Quillen1988} that a coderivation is a linear map $\delta:\Upsilon \rightarrow\mathcal{C} $, where $\Upsilon\in
{}^\mathcal{C} \mathfrak{M}^\mathcal{C}$, satisfying co-Leibniz rule
\begin{equation}\label{cLeibniz}
	\Delta\circ\delta=(\mathrm{id}\otimes\delta)\circ\Delta_L+(\delta\otimes\mathrm{id})\circ\Delta_R\,,
\end{equation}
or using Sweedler-type notation introduced earlier
\begin{equation}\label{cLeibniz2}
( \delta(m))_{(1)}\otimes(\delta(m))_{(2)} = m_{(-1)}\otimes \delta(m_{<0>}) + \delta(m_{<0>})\otimes m_{(1)}\,.
\end{equation} 
\begin{rrm}
Restriction of $\delta$ to any subbicomodule $\Upsilon_1\subset\Upsilon$ becomes a coderivation on $\Upsilon_1$.
\end{rrm}
\begin{rrm}\label{coideal}
	 It is easy to deduce, that 
	the image of any coderivation $\mathrm{Im}\delta$ is a coideal in $\mathcal{C}$:  the co-Leibniz formula  \eqref{cLeibniz} gives 
	\begin{eqnarray}\label{cLeibniz3}
		\Delta(\mathrm{Im\delta})&\subset&\mathcal{C}\otimes\mathrm{Im}\delta+\mathrm{Im}\delta\otimes\mathcal{C}.
	\end{eqnarray}
	We know that for every coderivation $\mathrm{Im}\delta$ is a coideal; however, the opposite is not always true. An example of such  a case is a matrix coalgebra Example~\ref{ex_M2x2_coalgebra}. 
    Moreover, applying $\epsilon\otimes\epsilon$ to both sides of \eqref{cLeibniz2}  we arrive to the conclusion that $$\epsilon\circ\delta=0\,.$$
    Consequently, composing $\pi_\delta\circ\delta=0$, where $\pi_\delta:\mathcal{C}\rightarrow\mathcal{C}/\mathrm{Im\delta}$ is the canonical projection,  one gets the trivial coderivation.
	Remember, that $\mathrm{Ker}\varepsilon$ is the biggest coideal in $\mathcal{C}$ and the factor-coalgebra $\mathcal{C}/\mathrm{Ker}\varepsilon\equiv \mathbb{K}$.   
 
\end{rrm}
 
A coderivation $\delta:\Upsilon \rightarrow\mathcal{C} $ is called internal if there exists an element $\Gamma\in \Upsilon^*$ such that
\begin{equation}
	\delta (m)\equiv \delta_\Gamma (m)= m_{(-1)} \Gamma( m_{<0>} )-
	\Gamma( m_{<0>}) m_{(1)}\,.
\end{equation}
For coseparble coalgebras every coderivation is inner [Doi81], see Remark \ref{coseparable} below. 
\begin{ex}	 
For any coalgebra one can consider internal coderivations from the coalgebra itself: every element of the algebra $\alpha\in\mathcal{C}^*$  provides the coderivation:
		\begin{eqnarray}
			\delta_\alpha(a)&:=&a_{(1)}\alpha(a_{(2)})-a_{(2)}\alpha(a_{(1)}).
		\end{eqnarray}
		Let's note, that $\delta_\varepsilon=0$.  
		We can also check that $\left[\delta_\alpha,\delta_\beta\right]=
		\delta_{[\alpha,\beta]_\star}$
which means that the algebra $\mathcal{C}^*$ can also be seen as a Lie algebra of internal coderivations of $\mathcal{C}$.
	\end{ex}
\begin{rrm}
	It is easy to observe now that the transpose map $\delta^*: \mathcal{C}^*\rightarrow M^* $, where $M^*\in
	{}_{\mathcal{C}^*} \mathfrak{M}_{\mathcal{C}^*}$ is a derivation satisfying the standard Leibniz rule
	\begin{equation}
		\delta^*(\alpha\star\beta)=\delta^*(\alpha)\star\beta+\alpha\star\delta^*(\beta)\,,
	\end{equation}
	and $\delta^*(\alpha)(m)=\alpha(\delta(m))$. \footnote{In general, except of finite-dimensional cases, not every derivation $d: \mathcal{C}^*\rightarrow N $, where $N\in
		{}_{\mathcal{C}^*} \mathfrak{M}_{\mathcal{C}^*}$ can be obtained in this way.} In particular, for the last example 
		$\delta^*_\alpha(\beta)= [\alpha,\beta]_\star$ becomes an internal derivation of the algebra $\mathcal{C}^*$.
\end{rrm}
Similarly to the case of algebras, for each coalgebra there exists a universal coderivation \cite{Doi81}. Firstly, for a coalgebra $(\mathcal{C}, \Delta, \varepsilon)$ one defines a universal bicomodule $\Upsilon^U_\mathcal{C}$ as a quotient 
\begin{eqnarray}\label{univCoder}
	\Upsilon^U_\mathcal{C}= \mathcal{C}\otimes \mathcal{C}\,/\,\mathrm{Im}\Delta\equiv \mathrm{Coker}\Delta
\end{eqnarray}
together with 
 \begin{eqnarray}\label{DeltaU}
 	\Delta_L^U[a\otimes b]:=a_{(1)}\otimes[a_{(2)}\otimes b]\,,\quad
 	\Delta_R^U[a\otimes b]:=[a\otimes b_{(1)}]\otimes b_{(2)}\,,
 \end{eqnarray}
 where $[a\otimes b]$ denotes the corresponding equivalence class of a simple tensor $a\otimes b\in \mathcal{C}\otimes \mathcal{C}$ ($[\mathrm{Im}\Delta]=0$). Then one gets the exact sequence of bicomodules  
 \begin{equation}\label{couniv}
 	0\longrightarrow   \mathcal{C} \stackrel{\Delta}{\longrightarrow} \mathcal{C}\otimes \mathcal{C}\stackrel{\pi}{\longrightarrow}
 	\Upsilon^U_\mathcal{C}\longrightarrow 0\,,
 \end{equation}
 where $\pi(a\otimes b)=[a\otimes b]$ is a canonical projection.
 The  universal coderivation is defined by \cite{Doi81}
 \begin{equation}
 \delta^U[a\otimes b]:=a\varepsilon(b)-b\varepsilon(a)
 \end{equation}
 This is not an internal derivation, except the case of coseparable coalgebras (Remark \ref{coseparable}).
\begin{lema} 
      For the coderivation $(\Upsilon^U_\mathcal{C},\delta^U)$ one gets the short exact sequence of vector spaces: 
   \begin{eqnarray}\label{kerdelta}
       \begin{tikzpicture}
                \matrix (m) [matrix of math nodes,row sep=3em,column sep=4em,minimum width=2em]
                {
          0 & \mathrm{Ker}\delta^U &  \Upsilon^U_\mathcal{C} & \mathrm{Ker}\varepsilon & 0 \\};
          \draw[->] (m-1-1) edge (m-1-2); \draw[->] (m-1-2) edge (m-1-3);
                \draw[->] (m-1-3) edge node [above] {$\delta^U$} (m-1-4);
                \draw[->] (m-1-4) edge (m-1-5);
            \end{tikzpicture}
   \end{eqnarray}
   \begin{proof}
       We already know $\varepsilon\circ\delta=0$. We need to show that $\delta^U: \Upsilon^U_\mathcal{C}\rightarrow \mathrm{Ker}\varepsilon$ is surjective. Take $a\in \mathrm{Ker}\varepsilon$ and $b\notin \mathrm{Ker}\varepsilon$ then
       \begin{eqnarray}    \delta^U([a\otimes b])&=& a\varepsilon(b)\,.
       \end{eqnarray}
   \end{proof}
\end{lema}
Thus in the finite-dimensional case ($\dim \mathcal{C}=n$) one gets $\dim \Upsilon^U_\mathcal{C}=n(n-1)$ and $\dim  \mathrm{Ker}\delta^U=(n-1)^2$.

Dualizing the exact sequence \eqref{couniv} one gets a commutative diagram with exact rows of being bimodule morphisms
 \begin{eqnarray}\label{codual}
 	\begin{tikzpicture}
 		\matrix (m) [matrix of math nodes,row sep=3em,column sep=4em,minimum width=2em]
 		{
 			0 &\mathcal{C}^{*} & (\mathcal{C}\otimes\mathcal{C})^* & (\Upsilon^U_\mathcal{C})^*& 0\\
 			& & \mathcal{C}^*\otimes\mathcal{C}^* & \mathrm{Ker}\mu& \\};
 		\draw[->] (m-1-2) edge node [above] {$\ $} (m-1-1);
 		\draw[<-] (m-1-2) edge node [above] {$\Delta^*$} (m-1-3);
 		\draw[<-] (m-1-3) edge node [above] {$\pi^*$} (m-1-4);
 		\draw[<-] (m-1-4) edge node [above] {$\ $} (m-1-5);
 		\draw[<-] (m-2-3) edge node [above] {$\ $} (m-2-4);
 		\draw[->] (m-1-5) edge node [left] {$\ $} (m-2-4);
 		\draw[<-] (m-1-2) edge node [right] {$\ \mu $} (m-2-3);
 		\draw[->] (m-2-4) edge node [left] {$\cup $} (m-1-4);
 		\draw[<-] (m-1-3) edge node [right] {$\cup $} (m-2-3);
 	\end{tikzpicture}
 \end{eqnarray}
 Notice that $\Phi\in (\mathcal{C}\otimes\mathcal{C})^*$ are bilinear forms on the vector space $\mathcal{C}$, and $(\Upsilon^U_\mathcal{C})^*$ consist of the forms vanishing on all coproducts, i.e. $\Phi(\Delta(a))=0$ for arbitrary $a\in \mathcal{C}$.
  The vertical arrows are injective maps that become identities in the finite-dimensional case. 
 \begin{rrm}
 In particular, $\mathrm{Ker}\mu\equiv\Omega^U_{\mathcal{C}^*}\subset 
 (\Upsilon^U_\mathcal{C})^*$ is a subbimodule consisting of universal differential one-forms of the algebra $\mathcal{C}^*$.	 
 \end{rrm}
  \begin{rrm}\label{coseparable}
  For coseparable coalgebras, the exact sequence \eqref{couniv} of bicomodules splits, see \cite{Doi81}, and all derivations are internal.   We recall that coseparability of a coalgebra $\mathcal{C}$ is equivalent to the existence of a cointegral functional $\omega: \mathcal{C}\otimes\mathcal{C}\rightarrow \mathbb{K}$ with the following properties: 
  \begin{equation}
  	\omega\circ\Delta=\varepsilon\,,\quad (\mathrm{id}\otimes\omega)\circ(\Delta\otimes\mathrm{id})=(\omega\otimes\mathrm{id})\circ(\mathrm{id}\otimes\Delta)\,. 
  \end{equation} Thus defining an injective bicomodule map $\hat\omega: \mathcal{C}\otimes\mathcal{C}\rightarrow\mathcal{C}$ by $\hat{\omega}(a\otimes b)= a_{(1)} \omega(a_{(2)}\otimes b)=\omega(a\otimes b_{(1)})b_{(2)}$ one gets $\omega=\varepsilon\circ\hat{\omega}$ and the splitting condition $\hat\omega\circ\Delta=\mathrm{id}_\mathcal{C}$ is fulfilled. Similarly, one can construct another injective bicomodule map $\omega^U:\Upsilon^U_\mathcal{C}\rightarrow\mathcal{C}\otimes\mathcal{C}$ by
  \begin{eqnarray}
  	\omega^U([a\otimes b])&:=&a\otimes b- a_{(1)}\otimes b_{(2)}\omega(a_{(2)}\otimes b_{(1)})=\nonumber\\
  	 a\otimes b- a_{(1)}\otimes a_{(2)}\omega(a_{(3)}\otimes b_{})&=& a\otimes b- \omega(a_{}\otimes b_{(1)})b_{(2)}\otimes b_{(3)}\,.
   \end{eqnarray}
  Thus $\pi\circ\omega^U=\mathrm{id}_{\Upsilon^U_\mathcal{C}}$ and $\hat\omega\circ\omega^U=0$. It causes the bicomodule decomposition $\mathcal{C}\otimes\mathcal{C}=\mathrm{Im}\Delta\oplus \mathrm{Im}\omega^U$. Moreover, introducing a new functional $\bar\omega^U=(\varepsilon\otimes\varepsilon)\circ\omega^U$ it turns out that $\bar\omega^U([a\otimes b])=\varepsilon(a)\varepsilon(b)-\omega(a\otimes b)$ and the universal coderivations becomes internal, i.e., $\delta^U= (\mathrm{id}\otimes\bar\omega^U)\circ\Delta^U_L-(\bar\omega^U\otimes\mathrm{id})\circ\Delta^U_R\,$. Since arbitrary codervation $\delta:\Upsilon\rightarrow\mathcal{C}$ has a form $\delta=\delta^U\circ\hat\delta$, where $\hat\delta\in\mathrm{Com}^{(\mathcal{C},\mathcal{C})}(\Upsilon, \Upsilon^U_\mathcal{C})$, see Proposition \ref{Doi} and formula \eqref{morphism3} below, then it automatically becomes internal for coseparable coalgebras.
 \end{rrm}
  However, in the general case, the following is true.
 
 \begin{pr}\label{split_couniv} The exact sequence \eqref{couniv} splits as a sequence of right (resp. left) comodules. In particular, the right (resp. left) comodule $\mathcal{C}\otimes\mathcal{C}$ decomposes into a direct sum $\mathcal{C}\otimes\mathcal{C}=(\mathrm{Ker}\varepsilon\otimes\mathcal{C})
 	\oplus \mathrm{Im}\Delta$ (resp. $\mathcal{C}\otimes\mathcal{C}= (\mathcal{C}\otimes\mathrm{Ker}\varepsilon)
 	\oplus \mathrm{Im}\Delta$ ) of its right (resp. left) subcomodules. 
 	\begin{proof}
 		We provide the proof for the right comodules. First, define the map $r_\varepsilon (a\otimes b)=\varepsilon(a)b$. It is a right comodule map $r_\varepsilon:\mathcal{C}\otimes\mathcal{C}\rightarrow \mathcal{C}$ such that $$r_\varepsilon\circ\Delta=\mathrm{id}_\mathcal{C}\,.$$
 		Next, define the right comodule map $\sigma_R\doteq (\delta^U\otimes\mathrm{id})\circ\Delta^U_R:\Upsilon^U_\mathcal{C} \rightarrow \ker\varepsilon\otimes\mathcal{C}\subset\mathcal{C}\otimes\mathcal{C}$ by the formula (c.f. Lemma \ref{lemma1}, B))
 		\begin{equation}\label{sigmaR}
 			\sigma_R([a\otimes b])=(a\varepsilon (b_{(1)})-b_{(1)}\varepsilon (a))\otimes b_{(2)}=
 			a\otimes b-\varepsilon(a)\Delta(b)
 		\end{equation}
 		such that $\pi\circ\sigma_R=\mathrm{id}_{\Upsilon^U_\mathcal{C}}$. In particular, $\sigma_R$ is injective, since its kernel is trivial. The last thing we need to check is that  $r_\varepsilon\circ\sigma_R=0$ and  $\mathrm{Im}\,\sigma_R=\mathrm{Ker}\varepsilon\otimes\mathcal{C}$. In fact, assuming $\varepsilon(a)=0$ one gets from \eqref{sigmaR}
 		\begin{equation}
 				\sigma_R([a\otimes b])= 
 			a\varepsilon (b_{(1)})\otimes b_{(2)}=a\otimes b\,, 
 		\end{equation}
 That completes the proof.		
 		 	\end{proof}
 \end{pr}
 We recall that a right free comodule has the form $V\otimes\mathcal{C}$, where $V$ is an arbitrary $\mathbb{K}$ space.
 \begin{cor}\label{cor_sigmaR_iso}
 	The mapping $\sigma_R$ (resp. $\sigma_L$) provides a right (resp. left) comodule isomorphism between $\Upsilon^U_\mathcal{C}$ and the right free comodule
 	$\mathrm{Ker}\varepsilon\otimes\mathcal{C}$ (resp. left free $ \mathcal{C}\otimes \mathrm{Ker}\varepsilon$),  where	 
 	\begin{eqnarray}\label{sigmaL}
 	 	\sigma_L([a\otimes b])=a_{(1)} \otimes(a_{(2)}\varepsilon (b)-b\varepsilon (a_{(2)}))
 	 	=-a\otimes b + \varepsilon(b)\Delta(a) \,.
 	 \end{eqnarray} 
     We can define the left inverse $\sigma_R^{-1}=\pi|_{\mathrm{Ker}\varepsilon\otimes\mathcal{C}}\,$, i.e.
     \begin{eqnarray}
 	 	\sigma_R^{-1}(a\otimes b)&=&[a\otimes b].
 	 \end{eqnarray}	
 if $\varepsilon(a)=0$ and,  similarly, $\sigma_L^{-1}(a\otimes b)=-[a\otimes b]$	 if $\varepsilon(b)=0$ . The subspace  
 $\sigma_R^{-1}(\mathrm{Ker}\varepsilon\otimes\mathrm{Ker}\varepsilon)=
 \ \sigma_L^{-1}(\mathrm{Ker}\varepsilon\otimes\mathrm{Ker}\varepsilon)= \mathrm{Ker}\delta^U$, c.f. \eqref{kerdelta}. \footnote{$\mathrm{Ker}\delta^U$ contains e.g. the diagonal elements $[a\otimes a]$ with $\varepsilon(a)\neq 0$, and then the elements $[a\otimes b+b\otimes a]$. (For group-like $a$ one gets $[a\otimes a]=0$.).}
We notice that $\mathrm{Ker}\varepsilon\otimes\mathcal{C}$ (resp. $ \mathcal{C}\otimes \mathrm{Ker}\varepsilon$) is a right (resp. left) free comodule which can be endowed with a bicomodule structure if we define
 induced left (resp. right) comodule structure by
 	\begin{eqnarray}
 	\widehat{\Delta^U}_L(a\otimes b)=a_{(1)} \otimes(((a_{(2)}\varepsilon (b_{(1)})-b_{(1)}\varepsilon (a_{(2)}))\otimes b_{(2)})\,,
 \end{eqnarray}
for $a\in \mathrm{Ker}\varepsilon$. Resp.
	\begin{eqnarray}
	\widehat{\Delta^U}_R(a\otimes b)=(a_{(1)} \otimes((a_{(2)}\varepsilon (b_{(1)})-b_{(1)}\varepsilon (a_{(2)})))\otimes b_{(2)}\,,
	\end{eqnarray}
for $b\in \mathrm{Ker}\varepsilon$. The last equations come from the embedding  $\mathrm{Ker}\varepsilon\otimes\mathcal{C}$ (resp. $ \mathcal{C}\otimes \mathrm{Ker}\varepsilon$) in $\mathcal{C}\otimes\mathcal{C}$. 
 	\end{cor}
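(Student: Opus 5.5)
The plan is to read the corollary off Proposition~\ref{split_couniv} and then check each of the remaining claims by short Sweedler computations. For the right case I use $\sigma_R=(\delta^U\otimes\mathrm{id})\circ\Delta^U_R$. It is a right comodule map because Lemma~\ref{lemma1} B) lifts any linear map, here $\delta^U$, to a right comodule map into the free comodule. Proposition~\ref{split_couniv} gives $\pi\circ\sigma_R=\mathrm{id}_{\Upsilon^U_\mathcal{C}}$ and $\mathrm{Im}\,\sigma_R=\mathrm{Ker}\varepsilon\otimes\mathcal{C}$. Hence $\sigma_R$ is a right comodule isomorphism onto $\mathrm{Ker}\varepsilon\otimes\mathcal{C}$, and its inverse is $\pi$ restricted there, i.e. $\sigma_R^{-1}(a\otimes b)=[a\otimes b]$ for $\varepsilon(a)=0$.

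The left case is obtained by mirroring the argument. I would set $\sigma_L=(\mathrm{id}\otimes\delta^U)\circ\Delta^U_L$ and expand:
\begin{equation*}
\sigma_L([a\otimes b])=a_{(1)}\otimes\bigl(a_{(2)}\varepsilon(b)-b\,\varepsilon(a_{(2)})\bigr)=\varepsilon(b)\Delta(a)-a\otimes b .
\end{equation*}
The sign convention of $\delta^U$ forces a sign here: $\pi\circ\sigma_L=-\mathrm{id}$, because $[\Delta(a)]=0$. This explains the formula $\sigma_L^{-1}(a\otimes b)=-[a\otimes b]$ on $\mathcal{C}\otimes\mathrm{Ker}\varepsilon$. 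Surjectivity onto $\mathcal{C}\otimes\mathrm{Ker}\varepsilon$ follows exactly as before: if $\varepsilon(b)=0$, then $\sigma_L([a\otimes b])=-a\otimes b$. Well-definedness on the quotient also needs a check, namely $\sigma_L(\Delta(c))=\varepsilon(c_{(2)})\Delta(c_{(1)})-\Delta(c)=0$.

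For the identification with $\mathrm{Ker}\delta^U$, take $a\otimes b$ with $\varepsilon(a)=\varepsilon(b)=0$. Then $\delta^U[a\otimes b]=0$, so $\sigma_R^{-1}(\mathrm{Ker}\varepsilon\otimes\mathrm{Ker}\varepsilon)\subseteq\mathrm{Ker}\delta^U$. For the reverse inclusion, if $\delta^U(x)=0$, apply $\mathrm{id}\otimes\varepsilon$ to $\sigma_R(x)=(\delta^U\otimes\mathrm{id})\Delta^U_R(x)$. This gives $\delta^U(x)=0$, so $\sigma_R(x)\in\mathrm{Ker}\varepsilon\otimes\mathrm{Ker}\varepsilon$. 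For the equality of the two preimages, note that on $\mathrm{Ker}\varepsilon\otimes\mathrm{Ker}\varepsilon$ both inverse maps are $\pm\pi$, which have the same image. Alternatively, the same $(\varepsilon\otimes\mathrm{id})$ argument works for $\sigma_L$.

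Finally, I would define the transported left coaction by $\widehat{\Delta^U}_L=(\mathrm{id}\otimes\sigma_R)\circ\Delta^U_L\circ\sigma_R^{-1}$, so that it is a coaction commuting with the right one by construction. Evaluating on $a\otimes b$ with $\varepsilon(a)=0$ gives $a_{(1)}\otimes\sigma_R([a_{(2)}\otimes b])$, and applying \eqref{sigmaR} yields the stated formula. The right-hand version for $\mathcal{C}\otimes\mathrm{Ker}\varepsilon$ is dual. I expect the only real obstacle to be sign and convention bookkeeping, namely the minus sign in $\sigma_L$ and ensuring the transported structure agrees with the one induced from the embedding in $\mathcal{C}\otimes\mathcal{C}$. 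The remark that it ``comes from the embedding'' should be read as this transport, since $\mathrm{Ker}\varepsilon\otimes\mathcal{C}$ is not itself a left subcomodule of $\mathcal{C}\otimes\mathcal{C}$.
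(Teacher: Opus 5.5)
Your approach is the paper's own. The corollary just repackages the proof of Proposition~\ref{split_couniv}: $\pi\circ\sigma_R=\mathrm{id}$ and $\mathrm{Im}\,\sigma_R=\mathrm{Ker}\varepsilon\otimes\mathcal{C}$. The following checks are all correct:
\begin{itemize}
\item the left version, with $\pi\circ\sigma_L=-\mathrm{id}$;
\item the identification of $\mathrm{Ker}\delta^U$ via $(\mathrm{id}\otimes\varepsilon)\circ\sigma_R=\delta^U$, together with $\mathrm{Ker}(\mathrm{id}\otimes\varepsilon)\cap(\mathrm{Ker}\varepsilon\otimes\mathcal{C})=\mathrm{Ker}\varepsilon\otimes\mathrm{Ker}\varepsilon$;
\item the reading of $\widehat{\Delta^U}_L$ as the transported coaction $(\mathrm{id}\otimes\sigma_R)\circ\Delta^U_L\circ\sigma_R^{-1}$.
\end{itemize}

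The one step that fails as you state it is the last one, ``the right-hand version is dual.'' This is exactly where the sign you anticipated matters. Transport with your own $\sigma_L^{-1}=-\pi$. For $\varepsilon(b)=0$ this gives
\[
(\sigma_L\otimes\mathrm{id})\circ\Delta^U_R\circ\sigma_L^{-1}(a\otimes b)=-\sigma_L([a\otimes b_{(1)}])\otimes b_{(2)}=a\otimes\Delta(b)-\Delta(a)\otimes b .
\]
This is the \emph{negative} of the displayed formula. Expanded, the displayed formula reads $\widehat{\Delta^U}_R(a\otimes b)=\Delta(a)\otimes b-a\otimes\Delta(b)$.

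The displayed formula is in fact not a right coaction. Applying $\mathrm{id}\otimes\mathrm{id}\otimes\varepsilon$ to it gives $\Delta(a)\varepsilon(b)-a\otimes b=-a\otimes b$. By contrast, the same element of $\mathcal{C}^{\otimes 3}$ passes the counit test $\varepsilon\otimes\mathrm{id}\otimes\mathrm{id}$ for $\widehat{\Delta^U}_L$ when $\varepsilon(a)=0$.

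The printed formula corresponds to projecting onto $\mathcal{C}\otimes\mathrm{Ker}\varepsilon$ with $\sigma_L\circ\pi$. However, the projection of $\mathcal{C}\otimes\mathcal{C}$ onto $\mathcal{C}\otimes\mathrm{Ker}\varepsilon$ along $\mathrm{Im}\,\Delta$ is $a\otimes b\mapsto a\otimes b-\varepsilon(b)\Delta(a)=-\sigma_L([a\otimes b])$, i.e.\ $-\sigma_L\circ\pi$.

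So your argument proves the corrected statement, with the opposite overall sign in $\widehat{\Delta^U}_R$. You should carry out this computation and record the correction explicitly, rather than asserting agreement with the displayed formula by duality.
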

\begin{rrm}\label{induced}
 	Let $\phi: \mathcal{C}\rightarrow\mathcal{D}$ be a coalgebra map, i.e. $(\phi\otimes\phi)\circ\Delta^\mathcal{C}=\Delta^\mathcal{D}\circ\phi$ and $\varepsilon_\mathcal{C}=\varepsilon_\mathcal{D}\circ\phi$. Then any left/right/bicomodule $\Upsilon$ over $\mathcal{C}$ automatically becomes a left/right/bicomodule over $\mathcal{D}$ if we introduce induced left coactions $\Delta^\phi_L=(\phi\otimes\mathrm{id})\circ\Delta_L$ which has the following property
$$(\mathrm{id}\otimes\Delta^\phi_L)\circ\Delta^\phi_L=(\Delta^\mathcal{D}\otimes\mathrm{id})\circ\Delta^\phi_L=(\phi\otimes\phi\otimes\mathrm{id})\circ(\Delta^\mathcal{C}\otimes\mathrm{id})\circ\Delta_L\,,$$
 and analogously for the case of a right  induced  coaction $\Delta^\phi_R=(\mathrm{id}\otimes\phi)\circ\Delta_R$. For a bicomodule module structure map (see Lemma \ref{lemma1} above) one gets instead 
 $$(\Delta^\phi_L\otimes\mathrm{id})\circ\Delta^\phi_R=(\phi\otimes\mathrm{id}\otimes\phi)\circ(\Delta_L\otimes\mathrm{id})\circ\Delta_R\,.$$
 Moreover, any $\mathcal{C}$ codervation
 	$\delta:\Upsilon\rightarrow\mathcal{C}$  becomes coderivation over $\mathcal{D}$, i.e. more exactly $\phi\circ\delta:\Upsilon^\phi \rightarrow\mathcal{D}$, where $\Upsilon^\phi$ bears the induced left and right coactions.	 
 \end{rrm}
 If $\phi$ is injective, i.e. $\mathcal{C}\subset \mathcal{D}$, then $\mathcal{C}$ is called subcoalgebra of $\mathcal{D}$ or $\mathcal{D}$ is called coalgebra extension of $\mathcal{C}$. In particular, any $\mathcal{C}$ comodule can be considered as well as a comodule over an arbitrary extension of $\mathcal{C}$. To avoid this non-uniqueness, one can additionally assume that
 $\mathcal{C}$ is the smallest coalgebra for $M$, i.e., $\mathcal{C}=C(M)$ \footnote{If $C(M)$ is a minimal subcoalgebra for $M$ then it does not need to be minimal for a codifferential $\delta: M\rightarrow \mathcal{C}$, in a sense that the image $\delta(M)$ not necessarily belongs to $C(M)$.}. 
 If $\phi$ is surjective, then $\ker\phi$ is a coidal and $\mathcal{D}\equiv \mathcal{C}/\ker\epsilon$.
We are now in a position to define a morphism between two coderivations.

\begin{defi} A morphism between two coderivations
$\delta_i: \Upsilon_i\rightarrow \mathcal{C}_i\,, i=1,2$ is as a pair $(\phi,\Psi)$, where  	$\phi: \mathcal{C}_1\rightarrow \mathcal{C}_2$ is a coalgebra map and $\Psi: \Upsilon_1\rightarrow  \Upsilon_2$ a $\mathcal{C}_2$-bicomodule  (called also colinear) map making the following diagram commutative (c.f. Lemma \ref{lemma1})
 \begin{eqnarray}\label{morphism-gen}
 	\begin{tikzpicture}
 		\matrix (m) [matrix of math nodes,row sep=3em,column sep=4em,minimum width=2em]
 		{
 			\mathbb{K}&\mathcal{C}_1& \Upsilon_1 && \mathcal{C}_2\otimes \Upsilon_1\otimes\mathcal{C}_2 \\
 		\ &\mathcal{C}_2 & \Upsilon_2  && \mathcal{C}_2\otimes \Upsilon_2\otimes\mathcal{C}_2 \\};
 		\draw[->] (m-1-3) edge node [above] {$\delta_1$} (m-1-2);
 		\draw[->] (m-1-2) edge node [above] {$\varepsilon_1$} (m-1-1);
 		\draw[->] (m-2-2) edge node [above] {$\varepsilon_2$} (m-1-1);
 		\draw[->] (m-1-3) edge node [above] {$(\phi\otimes\mathrm{id}\otimes\phi)\circ {}_L \Delta^1_{R}$} (m-1-5);
 		\draw[->] (m-2-3) edge node [above] {$\delta_2$} (m-2-2);
 		\draw[->] (m-2-3) edge node [above] {$ {}_L \Delta^2_{R}$} (m-2-5);
 		\draw[->] (m-1-2) edge node [right] {$\phi$} (m-2-2);
 		\draw[->] (m-1-3) edge node [right] {$\Psi$} (m-2-3);
 		\draw[->] (m-1-5) edge node [right] {$\mathrm{id}\otimes\Psi\otimes\mathrm{id}$} (m-2-5);
 	\end{tikzpicture}
 \end{eqnarray}
   
 i.e. $\phi\circ\delta_1=\delta_2\circ\Psi$ and $\Psi(\upsilon)_{(-1)}\otimes\Psi(\upsilon)_{<0>}\otimes \Psi(\upsilon)_{(1)}=\phi(\upsilon_{(-1)})\otimes\Psi(\upsilon_{<0>})\otimes \phi(\upsilon_{(1)})$. 
 \end{defi}
 The diagram commutes when all objects are considered over the algebra $\mathcal{C}_2$.  In particular,  
 every $\mathcal{C}_1$ comodule (bicomodule) becomes $\mathcal{C}_2$ one and every $\mathcal{C}_1$ coderivation becomes 	$\mathcal{C}_2$ coderivation.  
 
 Setting $\mathcal{C}_1=	\mathcal{C}_2=	\mathcal{C}$ and $\phi=\mathrm{id}_\mathcal{C}$ one gets
 \begin{equation}\label{morphism1}
 	 \delta_1=\delta_2\circ\Psi\,.
 \end{equation}
 i.e., the diagram below needs to be commutative:
  \begin{eqnarray}\label{morphism2}
 	\begin{tikzpicture}
 		\matrix (m) [matrix of math nodes,row sep=3em,column sep=4em,minimum width=2em]
 		{
 			\Upsilon_1 & \mathrm{Ker}\varepsilon\subset\mathcal{C} \\
 			\Upsilon_2 & \\};
 		\draw[->] (m-1-1) edge node [above] {$\delta_1$} (m-1-2);
 		\draw[->] (m-2-1) edge node [above] {$\delta_2$} (m-1-2);
 		\draw[->] (m-1-1) edge node [left] {$\Psi$} (m-2-1);
 	\end{tikzpicture}
 \end{eqnarray}  
 This implies $\mathrm{Ker}\Psi\subset\mathrm{Ker}\delta_1$, $\Psi(\mathrm{Ker}\delta_1)\subset\mathrm{Ker}\delta_2$ and $\delta_2(\mathrm{Im}\Psi)=\mathrm{Im}\delta_1\subset  \mathrm{Im}\delta_2$.  
    \begin{rrm}(Definition of a factor derivation)
 	If $\Psi$ in the diagram above is a bicomodule epimorphism, i.e. $\Upsilon_2=\Upsilon_1/\mathrm{Ker} \Psi$, then 
 	$\delta_2$ is called a factor derivation. We are most interested in the case when $\mathrm{Ker}\Psi$ is a maximal subbicomodule of $\mathrm{Ker}\delta_1$.
 \end{rrm}
 \begin{rrm}
 	$\delta^U$ can be considered as a factor derivation $\tilde\delta: \mathcal{C}\otimes \mathcal{C}\rightarrow \mathcal{C}$ by the subicomodule $\mathrm{Im}\,\Delta$, where $\tilde\delta (a\otimes b)=a\varepsilon(b)-b\varepsilon(a)$, and $\tilde\delta(\mathrm{Im}\,\Delta)=0$. One should notice that  $\mathrm{Im}\,\Delta$ is a maximal subbicomodule of $\mathrm{Ker}\tilde\delta$. It means, since $\pi(\mathrm{Ker}\tilde\delta)=\mathrm{Ker}\delta^U$, that there is no proper subbicomodules in $\mathrm{Ker}\delta^U$. Even more, one finds the following	 
 \end{rrm}
 \begin{lema}
 	There are no proper right or left subcomodules in $\mathrm{Ker}\delta^U$.
 	\begin{proof}
 	We show for right comodules. Consider an element $x=\sum_i [a^i\otimes b^i]\in \mathrm{Ker}\delta^U$.
  We want $\Delta^U_R(x)\in \mathrm{Ker}\delta^U\otimes\mathcal{C}$. Therefore, $(\delta^U\otimes \mathrm{id})(\Delta^U_R(x))=\sigma_R(x)$ should vanish. But, c.f. \eqref{sigmaR}, $\sigma_R(x)= \sum_ia^i\otimes b^i -\sum_i\varepsilon(a^i)\Delta(b^i)=0$ means that the element $x$ is trivial. 
 	\end{proof}	 
 \end{lema}
 
 Most important, a bicomodule  $\Upsilon^U_\mathcal{C}$ enjoys the  the following universality property: 
 \begin{pr}\label{Doi} (Y. Doi \cite{Doi81}) There is one-to-one correspondence between
 	\begin{equation}
 \mathrm{Com}^{(\mathcal{C},\mathcal{C})}(\Upsilon, \Upsilon^U_\mathcal{C})	\longleftrightarrow	 \mathrm{Coder}(\Upsilon,\mathcal{C})
 	\end{equation}
 which is an isomorphism of $\mathbb{K}$-spaces.
 	\begin{proof}
 		Obviously, having $\psi\in  \mathrm{Com}^{(\mathcal{C},\mathcal{C})}(\Upsilon, \Upsilon^U_\mathcal{C})$ we can define the corresponding $\delta_\psi=\delta^U\circ\psi$. Conversely, for given $\delta: \Upsilon\rightarrow \mathcal{C}$ one finds an inverse transformation by setting ($\epsilon\circ\delta=0$ ):
 		\begin{equation}
 			\psi_\delta(m)\equiv \hat\delta (m)\doteq [\delta(m_{<0>})\otimes m_{(1)}]\,= -\,[m_{(-1)}\otimes \delta(m_{<0>})]\,.\label{hatdelta}
 		\end{equation}
 		It follows from \eqref{cLeibniz} that the last two expressions differ by $[\Delta(\delta(m))]=0$.	 
 	\end{proof}
 \end{pr}
In other words there is a morphism $\psi=\hat\delta$ from a given derivation $\delta: \Upsilon\rightarrow \mathcal{C}$ to the universal one
 \begin{equation}\label{morphism3}
 	\delta=\delta^U\circ\hat\delta\,.
 \end{equation}
 such that the diagram below is commutative:
 \begin{eqnarray}\label{morphism4}
 	\begin{tikzpicture}
 		\matrix (m) [matrix of math nodes,row sep=3em,column sep=4em,minimum width=2em]
 		{
 			\Upsilon & \mathrm{Ker}\varepsilon\subset\mathcal{C} \\
 			\Upsilon^U_\mathcal{C} & \\};
 		\draw[->] (m-1-1) edge node [above] {$\delta$} (m-1-2);
 		\draw[->] (m-2-1) edge node [above] {$\delta^U$} (m-1-2);
 		\draw[->] (m-1-1) edge node [left] {$\hat\delta$} (m-2-1);
 	\end{tikzpicture}
 \end{eqnarray}  
implying $\mathrm{Ker}\hat\delta\subset\mathrm{Ker}\delta$, $\hat\delta(\mathrm{Ker}\delta)\subset\mathrm{Ker}\delta^U$ and $\delta^U({\mathrm{Im}}\hat\delta)={\mathrm{Im}}\delta\subset  \mathrm{Im}\delta^U= \mathrm{Ker}\varepsilon$.
It appears that $\widehat{\delta}=\pi\circ\sigma_R=-\pi\circ\sigma_L$.
 \begin{rrm} Since $\mathrm{Coder}(\Upsilon,\mathcal{C})\subset \mathrm{Hom}(\Upsilon,\mathcal{C})\leftrightarrow	\mathrm{Com}^{(\mathcal{C},\mathcal{C})}(\Upsilon, \mathcal{C}^{\otimes 3} )$ then, following Lemma \ref{lemma1}, for any coderivation $\delta$ one can associate the bicomodule map $\tilde\delta(m)=m_{(-1)}\otimes \delta(m_{<0>})\otimes m_{(1)}$. In particular, $\widetilde{\delta^U}\circ\pi=\Delta\otimes\mathrm{id}-\mathrm{id}\otimes\Delta$ \cite{Quillen1988}.
\end{rrm}
\begin{lema}\label{lema_[phiophi]}
If $\phi\circ\delta_1=\delta_2\circ\Psi$ is a morphism of two coderivations (cf. (\ref{morphism-gen}, \ref{morphism1})) then
	\begin{equation}\label{morphism5}
    [\phi\otimes\phi]\circ\widehat{\delta_1}=\widehat{\delta_2}\circ\Psi\,.
		\end{equation}
\begin{proof}
 We calculate (c.f. \eqref{hatdelta}): $\widehat{\delta_2} (\Psi(v))=[\delta_2\Psi(v)_{<0>}\otimes \Psi(v)_{(1)}] =
[\delta_2\Psi(v_{<0>})\otimes \phi(v_{(1)})]=$\\$[\phi(\delta_1 v_{<0>})\otimes \phi(v_{(1)})]  $, where the coalgebra morphism $\phi:\mathcal{C}_1\rightarrow \mathcal{C}_2$ implies a bicomodule morpism $[\phi\otimes\phi]:\Upsilon^U_{\mathcal{C}_1}\rightarrow\Upsilon^U_{\mathcal{C}_2}$, i.e. $[a\otimes b]\mapsto [\phi(a)\otimes\phi(b)]$
 as shown on the diagram below which extends the previous one \eqref{morphism-gen} by incorporating universal coderivations 
\begin{eqnarray}\label{morphism-genU}
	\begin{tikzpicture}
		\matrix (m) [matrix of math nodes,row sep=3em,column sep=4em,minimum width=2em]
		{\mathbb{K}	&\mathcal{C}_1&\Upsilon^U_{\mathcal{C}_1} & \Upsilon_1 && \mathcal{C}_2\otimes\Upsilon_1\otimes  \mathcal{C}_2\\
		\ &	\mathcal{C}_2 &\Upsilon^U_{\mathcal{C}_2} & \Upsilon_2  &&  \mathcal{C}_2\otimes\Upsilon_2\otimes\mathcal{C}_2 \\};
			\draw[->] (m-1-2) edge node [above] {$\varepsilon_1$} (m-1-1);
			\draw[->] (m-2-2) edge node [above] {$\varepsilon_2$} (m-1-1);
		\draw[->] (m-1-3) edge node [above] {$\delta^U_1$} (m-1-2);
		\draw[->] (m-1-4) edge node [above] {$\widehat{\delta_1}$} (m-1-3);
		\draw[->] (m-2-4) edge node [above] {$\widehat{\delta_2}$} (m-2-3);
		\draw[->] (m-1-4) edge node [above] {$(\phi\otimes\mathrm{id}\otimes\phi)\circ {}_L \Delta^1_{R}$ }(m-1-6);
		\draw[->] (m-2-3) edge node [above] {$\delta^U_2$} (m-2-2);
		\draw[->] (m-2-4) edge node [above] {$ {}_L \Delta^2_{R}$} (m-2-6);
		\draw[->] (m-1-2) edge node [right] {$\phi$} (m-2-2);
		\draw[->] (m-1-4) edge node [right] {$\Psi$} (m-2-4);
		\draw[->] (m-1-3) edge node [right] {$[\phi\otimes\phi]$} (m-2-3);
		\draw[->] (m-1-6) edge node [right] {$\mathrm{id}\otimes\Psi\otimes\mathrm{id}$} (m-2-6);
	\end{tikzpicture}
\end{eqnarray}
As already mentioned, the mappings $\Psi$ and $[\phi\otimes\phi]$ are bicomodule morphisms over $\mathcal{C}_2$.
\end{proof}
\end{lema}
\begin{rrm}\label{rrm_morphism} There are some special cases to be discussed:
	\begin{itemize}
		\item If $\phi$ is injective, i.e. $\mathcal{C}_1\equiv\mathrm{Im}\phi$ is a subcoalgebra of $\mathcal{C}_2$ then $[\phi\otimes\phi]$ is also injective, and $\Upsilon^U_{\mathcal{C}_1}\subset \Upsilon^U_{\mathcal{C}_2}$.
	 \item If $\phi$ is surjective then its kernel is a coideal in $\mathcal{C}_1$ and $\mathcal{C}_2\equiv \mathcal{C}_1/\mathrm{Ker}\phi$. Then $[\phi\otimes\phi]$ is also surjective and $\Upsilon^U_{\mathcal{C}_2}\equiv\Upsilon^U_{\mathcal{C}_1}/\mathrm{Ker}[\phi\otimes\phi]$.
	 	\item If $\phi$ is an automorpismm then $[\phi\otimes\phi]$ is an automorphism.
\end{itemize}
\end{rrm}  
 These mean that with any coderivation $\delta:\Upsilon\rightarrow \mathcal{C}$ one can associate a unique subbicomodule  $\hat\delta(\Upsilon)$ of $\Upsilon^U_\mathcal{C}$ such that the coderivation $\delta$ is the restriction of $\delta^U$ to the image $\hat\delta(\Upsilon)$.  Consequently, the bicomodule map $\hat\delta$ becomes a morphism between these two coderivations, i.e. $\delta^U\circ\hat\delta=\delta$. For example, for an internal derivation $\delta_\alpha\,,\alpha\in \mathcal{C}^*$ one gets $\widehat{\delta_\alpha}(\mathcal{C})=\mathrm{span}\{[a_{(1)}\alpha(a_{(2)})\otimes a_{(3)}]\,|\, a\in \mathcal{C}\}$.
  However, the correspondence between subbicomodules and coderivations is not one-to-one since non-isomorphic coderivations can have the same subbicomodule $\hat\delta_1(\Upsilon_1)=\hat\delta_2(\Upsilon_2)\subset \Upsilon^U_\mathcal{C}$. To fix this problem, we introduce the following definition
  
 \begin{defi}\label{def_cdiff}
 	First Order Codifferential Calculus (FOCC) over a coalgebra $\mathcal{C}$ is a pair $(\Upsilon,\delta)$, where $\Upsilon\in 
 	{}^\mathcal{C} \mathfrak{M}^\mathcal{C}$ and $\delta: \Upsilon\rightarrow \mathcal{C}$  is a coderivation, such that  
 	the corresponding bicomodule homomorphism $\hat\delta$ is injective, i.e. $\mathrm{Ker}\hat{\delta}=0$.	 
 \end{defi}
 The last requirement is dual to the one that is present in the definition of a First Order Differential Calculus (FODC): a bimodule of one-forms should be generated by differentials. The universal coderivation, of course, satisfies the above condition ($\widehat{\delta^U}=\mathrm{id}_{\Upsilon^U(\mathcal{C})}$).
 As an immediate consequence, one gets:
  \begin{rrm}
 	The classification of FOCC on $\mathcal{C}$ is equivalent to the classification of subbicomodules of $\Upsilon^U_\mathcal{C}$ up to automorphisms of $\mathcal{C}$. This is because every FOCC is isomorphic to the restriction of $\delta^U$ to some subbimodule of $\Upsilon^U_\mathcal{C}$. For example, taking subbicomodule $\Upsilon\subset \mathcal{C}\otimes \mathcal{C}$ one gets $\pi(\Upsilon)\subset \Upsilon^U_\mathcal{C}$. For a more specific subclass, one can assume $\Upsilon=\mathcal{L}\otimes \mathcal{R}$, where $\mathcal{L}$ (resp. $\mathcal{R}$) is a left (resp. right) subcomodule (left (resp. right) coideal) of $\mathcal{C}$. Alternatively, one can look for subspaces of $W\subset\mathrm{Ker}\varepsilon$, such that $\sigma_R^{-1}(W\otimes \mathcal{C})$ (resp. $\sigma_L^{-1}(\mathcal{C}\otimes W)$ is a subbimodule in $\mathcal{C}\otimes\mathcal{C}$.
    But not all subbicomodules of $\Upsilon^U_\mathcal{C}$ are either of these forms. More systematically, it would be convenient to decompose a colagebra $\mathcal{C}$ into its indecomposable components and perform an analogous decomposition for $\Upsilon^U_\mathcal{C}$, c.f. next section (for canonical decompositions see, e.g.,\cite{Xu,Montgomery}).
 \end{rrm}
  \begin{ex}\label{ex_C1otimesC2/()}
 	Consider a non-simple coalgebra $\mathcal{C}$. For every two non-trivial subcoalgebras $\mathcal{C}_1,\mathcal{C}_2\subseteq\mathcal{C}$ one can associate the subicomodule:
 	\begin{eqnarray}
 		\pi(\mathcal{C}_1\otimes\mathcal{C}_2)&=&\frac{\mathcal{C}_1\otimes\mathcal{C}_2}{\mathrm{Im}\Delta\cap\mathcal{C}_1\otimes\mathcal{C}_2}=\frac{\mathcal{C}_1\otimes\mathcal{C}_2}{\mathrm{Im}\Delta^{\mathcal{C}_1\cap\mathcal{C}_2}}.\label{ex_c1oc2/()_eq}
 	\end{eqnarray}
 	where $\pi$ is the canonical projection. The corresponding FOCC has values in $\mathcal{C}_1+\mathcal{C}_2\subseteq\mathcal{C}$.
 	
 	In particular, for a coaugmented coalgebra, we can choose one subcoalgebra generated by a group-like element $ \{\mathbb{K}e\}$, and then  for any coaugmented subcoalgebra $\mathcal{C}_1$ we obtain FOCCs:
 	\begin{eqnarray}
 		\pi(\mathcal{C}_1\otimes \{\mathbb{K}e\})=\overline{\mathcal{C}_1}\otimes  \{\mathbb{K}e\},&\qquad&
 		\pi(\{\mathbb{K}e\}\otimes\mathcal{C}_1 )=\{\mathbb{K}e\}\otimes\overline{\mathcal{C}_1},
 	\end{eqnarray}
 	where $\overline{\mathcal{C}_1}=\mathcal{C}_1/\{e\cdot\mathbb{K}\}$ is a factor bicomodule. 
 	\begin{proof}
 	  We see that $\mathrm{Im}\Delta^{\mathcal{C}_1\cap\mathcal{C}_2}\subseteq\mathrm{Im}\Delta\cap\mathcal{C}_1\otimes\mathcal{C}_2$. However, we show that these subspaces are equal. Notice that  $\mathrm{Im}\Delta\cap\mathcal{C}_1\otimes\mathcal{C}_2=\{\Delta x\in\mathcal{C}_1\otimes\mathcal{C}_2| x\in\mathcal{C}\}\,.$
 		From the other hand:
 		\begin{eqnarray}
 			(\Delta\otimes\mathrm{id})\circ\Delta x&\in&\mathcal{C}_1\otimes\mathcal{C}_1\otimes\mathcal{C}_2,\nonumber\\
 			(\mathrm{id}\otimes\Delta)\circ\Delta x&\in&\mathcal{C}_1\otimes\mathcal{C}_2\otimes\mathcal{C}_2,\nonumber\\
 			\Rightarrow\Delta^2x&\in&\mathcal{C}_1\otimes(\mathcal{C}_1\cap\mathcal{C}_2)\otimes\mathcal{C}_2\,.\nonumber
 		\end{eqnarray}
 		So $x=(\varepsilon\otimes\mathrm{id}\otimes\varepsilon)\circ\Delta^2 x\in\mathcal{C}_1\cap\mathcal{C}_2,$
 		which means $\mathrm{Im}\Delta\cap\mathcal{C}_1\otimes\mathcal{C}_2\subseteq\mathrm{Im}\Delta^{\mathcal{C}_1\cap\mathcal{C}_2}$. 
 	\end{proof}
 \end{ex}

 \subsection{Direct sum decompositions}
 We recall see, eg., \cite{BrzezinskiWisbauer} that for any family $\{\mathcal{C}_i\}_{i\in\Lambda}$ of coalgebras one can introduce on the vector direct sum $\mathcal{C}= \oplus_{i\in\Lambda}\mathcal{C}_i$ the structure of coalgebra by settings the coproduct $\Delta:\mathcal{C}\rightarrow\mathcal{C}\otimes\mathcal{C}$ and the counit
 $\varepsilon: \mathcal{C}\rightarrow\mathbb{K}$  as unique maps satisfying the properties
 \begin{eqnarray}
 	\Delta\circ\xi_k=(\xi_k\otimes\xi_k)\circ\Delta_k\quad \mbox{and}\quad \varepsilon\circ\xi_k=\varepsilon_k\quad \mbox{for each}\quad k\in\Lambda\,,
 \end{eqnarray} 
 where $\xi_k:\mathcal{C}_k\rightarrow\mathcal{C}$ are canonical inclusions (now coalgebra morpfisms), $\Delta_k:\mathcal{C}_k\rightarrow\mathcal{C}_k\otimes\mathcal{C}_k\subset \mathcal{C}\otimes\mathcal{C}$ and $\varepsilon_k: \mathcal{C}_k\rightarrow\mathbb{K}$ denote the corresponding coproducts and counits.\footnote{Here $\Lambda$, unless otherwise stated, is an arbitrary set. Further, if no confusion arises, we skip writing it explicitly.} Thus we call $( \mathcal{C}, \Delta,\varepsilon)$ the direct sum  (or coproduct) of  the coalgebras $( \mathcal{C}_i, \Delta_i,\varepsilon_i)_{i\in\Lambda}$. Moreover, the universal property holds: for any family $\phi_i:\mathcal{C}_i\rightarrow\mathcal{D}$ of coalgebra morphisms there exists a unique coalgebra
 morphism $\phi:\mathcal{C}\rightarrow\mathcal{D}$  making the diagram commutative:
 \begin{eqnarray}\label{uni3}
 	\begin{tikzpicture}
 		\matrix (m) [matrix of math nodes,row sep=3em,column sep=4em,minimum width=2em]
 		{
 			\mathcal{C}_k & \mathcal{C}=\oplus_i \mathcal{C}_i\\
 			\ &\mathcal{D}\\};
 		\draw[->] (m-1-1) edge node [above] {$\xi_k$} (m-1-2);
 		\draw[->] (m-1-2) edge node [right] {$\phi$} (m-2-2);
 		\draw[->] (m-1-1) edge node [left] {$\phi_k$} (m-2-2);
 	\end{tikzpicture}
 \end{eqnarray} 
  \begin{cor}
     In contrast to the canonical inclusions  $\xi_n$, canonical projections  $\pi_n:\mathcal{C}\rightarrow\mathcal{C}_n$  are not coalgebra morphisms. We have $\Delta_n\circ\pi_n=(\pi_n\otimes\pi_n)\circ\Delta$,  $\pi_n\circ\xi_k\circ\pi_k=\delta_{nk}\pi_k$ and $\varepsilon\circ\xi_n\circ\pi_n=\varepsilon_n\circ\pi_n $, i.e., $\varepsilon_n\circ\pi_n$ is not equal to $\varepsilon$\,.
 \end{cor}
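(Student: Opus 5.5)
The plan is to verify the four assertions of the corollary one at a time, working only from the defining properties of the direct sum coalgebra. These are $\Delta\circ\xi_k=(\xi_k\otimes\xi_k)\circ\Delta_k$ and $\varepsilon\circ\xi_k=\varepsilon_k$. We also use the vector-space identities $\pi_n\circ\xi_k=\delta_{nk}\,\mathrm{id}_{\mathcal{C}_k}$ and $\sum_k\xi_k\circ\pi_k=\mathrm{id}_\mathcal{C}$, where the sum is finite on each element.

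First, I would establish $\Delta_n\circ\pi_n=(\pi_n\otimes\pi_n)\circ\Delta$. It suffices to check this on $\xi_k(c)$ for $c\in\mathcal{C}_k$, since these span $\mathcal{C}$. The right-hand side equals $(\pi_n\xi_k\otimes\pi_n\xi_k)\Delta_k(c)=\delta_{nk}\Delta_k(c)$, and so does the left-hand side. The identity $\pi_n\circ\xi_k\circ\pi_k=\delta_{nk}\pi_k$ is immediate from $\pi_n\circ\xi_k=\delta_{nk}\mathrm{id}$. Likewise, $\varepsilon\circ\xi_n\circ\pi_n=\varepsilon_n\circ\pi_n$ follows directly from $\varepsilon\circ\xi_n=\varepsilon_n$.

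Next comes the negative statement: $\pi_n$ is not a coalgebra morphism, and $\varepsilon_n\circ\pi_n\neq\varepsilon$. Here I would assume the family has at least two nonzero members $\mathcal{C}_n$ and $\mathcal{C}_k$ with $k\neq n$; otherwise the claim fails trivially. Take $c\in\mathcal{C}_k$ with $\varepsilon_k(c)\neq0$. Such an element exists because any nonzero coalgebra has nonzero counit, as $c=\varepsilon(c_{(1)})c_{(2)}$. Then $\varepsilon(\xi_k c)=\varepsilon_k(c)\neq0$, while $\varepsilon_n(\pi_n\xi_k c)=0$. This shows that $\pi_n$ fails counitality. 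The comultiplicativity identity above holds, so the failure is exactly in the counit condition.

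The only genuine care needed is this last point: making explicit the hypothesis that there are at least two nonzero summands, and producing an element of nonzero counit. Everything else is a routine check on generators.
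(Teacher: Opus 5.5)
Your proposal is correct. It is exactly the routine check, via $\Delta\circ\xi_k=(\xi_k\otimes\xi_k)\circ\Delta_k$, $\varepsilon\circ\xi_k=\varepsilon_k$ and $\pi_n\circ\xi_k=\delta_{nk}\,\mathrm{id}$, that the paper leaves implicit: it states the corollary without proof.

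Your extra hypothesis is a genuine tacit requirement of the statement; strictly, you only need some $\mathcal{C}_k\neq 0$ with $k\neq n$. Your conclusion that $\pi_n$ is comultiplicative and fails only counitality is exactly what the paper's ``i.e.'' clause expresses.
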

 Similarly, for any family $\Upsilon_i$ of right (left or bi) comodules over a coalgebra $\mathcal{C}$ one can uniquely define their direct sum (coproduct) $\Upsilon=\oplus_i\Upsilon_i$ together with the following universal property: for any family of comodule morphisms $\Phi_k: \Upsilon_k\rightarrow\Gamma$ there exists a unique (right/left/bi) comodule morphism  $\Phi: \Upsilon\rightarrow\Gamma$
 satisfying the relations $\Phi\circ\bar\xi_k=\Phi_k$ as shown on the commutative diagram
 \begin{eqnarray}\label{uni3b}
 	\begin{tikzpicture}
 		\matrix (m) [matrix of math nodes,row sep=3em,column sep=4em,minimum width=2em]
 		{
 			\Upsilon_k & \Upsilon=\oplus_i \Upsilon_i \\
 			\ &\Gamma \\};
 		\draw[->] (m-1-1) edge node [above] {$\bar\xi_k$} (m-1-2);
 		\draw[->] (m-1-2) edge node [right] {$\Phi$} (m-2-2);
 		\draw[->] (m-1-1) edge node [left] {$\Phi_k$} (m-2-2);
 	\end{tikzpicture}
 \end{eqnarray}  
 where $\bar\xi_k:\Upsilon_k\rightarrow \Upsilon$ are canonical inclusions (now morphisms). Particularly, for a family of bicomodules $\Upsilon_i\in {}^{\mathcal{C}}\mathfrak{M}^{\mathcal{C}}$, one gets
 \begin{equation}
    {}_L{} \Delta_{R} \circ\bar\xi_k =(\mathrm{id}\otimes\bar\xi_k\otimes \mathrm{id})\circ {}_L{} \Delta^k_{R}\,.
 \end{equation}
 If additionally $\Upsilon_i\in {}^{\mathcal{C}_i}\mathfrak{M}^{\mathcal{C}_i}$ and $\mathcal{C}=\oplus_i \mathcal{C}_i$ then
 \begin{equation}
    {}_L{} \Delta_{R} \circ\bar\xi_k =(\xi_k\otimes\bar\xi_k\otimes \xi_k)\circ {}_L{} \Delta^k_{R}\,.
 \end{equation}
   
 In such a case, one can say that the decompositions $\Upsilon=\oplus_i \Upsilon_i $ is compatible with the coalgebra decomposition $\mathcal{C}=\oplus_i\mathcal{C}_i$. 
 \begin{ex}
 	Let $\mathcal{C}=\oplus_i\mathcal{C}_i$. For any vector space $V$ the free right comodule $V\otimes\mathcal{C}=\oplus_i V\otimes \mathcal{C}_i$ decomposes in a compatible way into free subcomodules. 
 	Since $\ker\varepsilon\supseteq\oplus_i\ker\varepsilon_i$ then $\oplus_i(\ker\varepsilon_i\otimes \mathcal{C}_i)$ is a subcomodule in $\ker\varepsilon\otimes \mathcal{C}$. In general, a direct sum of free comodules does not need to be a free comodule.
 \end{ex}
 \begin{pr}\begin{enumerate}
 		\item  Let as above $\Upsilon=\oplus_i \Upsilon_i $ be a direct sum of $\mathcal{C}$-bicomodules and let $\delta_i:\Upsilon_i\rightarrow\mathcal{C}$ be a family of $\mathcal{C}$-coderivations. Then there exists a unique coderivation $\delta:\Upsilon\rightarrow\mathcal{C}$ such that $\delta\circ\bar\xi_i=\delta_i$. Moreover, $\bar\xi_i$ become morphisms between coderivations.
 		\item Assume additionally that the decomposition $\Upsilon=\oplus_i \Upsilon_i $ is compatible with the coalgebra decomposition $\mathcal{C}=\oplus_i\mathcal{C}_i$ and  
 		  $\delta_i:\Upsilon_i\rightarrow\mathcal{C}_i$ are FOCC over $\mathcal{C}_i$.    
 		Then $\delta$ is a FOCC as well. We call it a direct sum of FOCCs, i.e. $\delta=\oplus_i\delta_i$.
 	\end{enumerate}
 	\begin{proof}
 		Utilizing the universal property of the vector space direct sum we define $\delta$ as a unique linear map satisfying the property $\delta\circ\bar\xi_i=\delta_i$. The remaining properties then follow from this definition.
 	\end{proof}	 
 \end{pr}
 \begin{ex} 
 	Let $\mathcal{C}=\oplus_i\mathcal{C}_i$ and $\delta^U_i: \Upsilon_{\mathcal{C}_i}^U\rightarrow\mathcal{C}_i$ denote the corresponding universal codifferentials.
 	Then  $\oplus_i\delta^U_i:\oplus_i \Upsilon_{\mathcal{C}_i}^U\rightarrow\oplus_i\mathcal{C}_i$ defines a direct sum FOCC which is not universal for the coalgebra  $\oplus_i\mathcal{C}_i$. This is because the universal bicomodule  $\Upsilon^U_{\mathcal{C}}$ contains all possible FOCC, which becomes a restriction of the universal coderivation to a suitable subbicomodule. More exactly, one gets:
 \end{ex}
  
\begin{pr}\label{pr_bicomodul_decom}
    Let $\mathcal{C}=\bigoplus_i\mathcal{C}_i$. For any bicomodule $M$ over $\mathcal{C}$, there exists a  direct sum decomposition into the corresponding subbicomodules:
    \begin{eqnarray}
        M=\bigoplus_{ij} M_{ij}=\bigoplus_{M_{ij}\neq \{0\}} M_{ij}\,,\qquad   M_{ij}&\doteq& {}_L \Delta_{R}^{-1} (\mathcal{C}_i\otimes M\otimes\mathcal{C}_j)
    \end{eqnarray}
    where subspaces \footnote{In fact, we can write $M_{ij}= {}_L \Delta_{R}^{-1} ((\mathcal{C}_i\otimes M\otimes\mathcal{C}_j)\cap \mathrm{Im}{}_L \Delta_{R})$ which explains why some subspaces $M_{ij}$  might be trivial even when $\mathcal{C}$ is a minimal coalgebra for $M$, see example below.}
   are defined as inverse images of the bicomodule structure map ${}_L\Delta_{R}: M\rightarrow \mathcal{C}\otimes M\otimes\mathcal{C}=\bigoplus_{ij}\mathcal{C}_i\otimes M\otimes\mathcal{C}_j$. Moreover, if $N\subset M$ is a subbicomodule and
   $ N=\bigoplus_{ij} N_{ij}$ denotes its corresponding decomposition then $ N_{ij}\subset  M_{ij}$.   
   \begin{proof}
       Since ${}_L\Delta_{R}$ is injective then $M_{ij}\cap M_{i'j'}=\{0\}$ for $(i,j)\neq (i',j')$. We need to show that ${}_L\Delta_{R}(M_{ij})\subset \mathcal{C}_i\otimes M_{ij}\otimes\mathcal{C}_j$, i.e., 
   $M_{ij}\in {}^{\mathcal{C}_i}\mathfrak{M}^{\mathcal{C}_j}\subset {}^{\mathcal{C}}\mathfrak{M}^{\mathcal{C}}$. Equivalently, we show that  $M_{ij}\in  {}_{\mathcal{C}^*}\mathfrak{M}_{\mathcal{C}^*}$ is a $\mathcal{C}^*$ subbimodule of $M$ (see Appendix A, \eqref{a1}), i.e. that  ${}_L\Delta_{R}(\alpha\star m\star\beta)\in  \mathcal{C}_i\otimes M\otimes\mathcal{C}_j$ for $m\in M_{ij}\,, \alpha,\beta\in \mathcal{C}^*$. 
    Consider ${}_L\Delta_{R}(m)=\sum_{x,y} c_x\otimes m_{xy}\otimes c_y$, where $m\in M_{ij}\,,c_x\in \mathcal{C}_i\,,c_y\in\mathcal{C}_j$, the sum is finite and without loss of generality it can be assumed  that  the elements $m_{xy}$  are linearly independent. Now, ${}_L\Delta_{R}(\alpha\star m\star\beta)= 
   (\beta\otimes {}_L \Delta_{R}\otimes\alpha)\circ {}_L \Delta_{R}(m)=  (\beta\otimes \mathrm{id}_{\mathcal{C}_i}
   \otimes\mathrm{id}_M\otimes\mathrm{id}_{\mathcal{C}_j}\otimes\alpha)\circ (\Delta_{\mathcal{C}_i}\otimes\mathrm{id}_M\otimes\Delta_{\mathcal{C}_j})\circ {}_L \Delta_{R}(m) =\sum_{x,y} \beta((c_x)_{(1)})(c_x)_{(2)}\otimes m_{xy}\otimes \alpha((c_y)_{(2)})(c_y)_{(1)}\,.$
   
   Next, we have to show that $M=\bigoplus_{ij} M_{ij}$. Indeed, if $m\in M$ then its image ${}_L\Delta_{R}(m)=\sum_{kl}\sum_{xy}c_{k x}\otimes m^{kl}_{xy}\otimes c_{ly}$ is a finite sum of some elements $\sum_{xy}c_{k x}\otimes m^{kl}_{xy}\otimes c_{l y}\in \mathcal{C}_k\otimes M_{kl}\otimes\mathcal{C}_l$ which are images of the corresponding elements from $M_{kl}$. 
   
   The last statement is obvious since $\mathcal{C}_i\otimes N\otimes\mathcal{C}_j\subset\mathcal{C}_i\otimes M\otimes\mathcal{C}_j$. The proof is done.\\
    \end{proof}
\end{pr}
\begin{ex}
 	 The coalgebra  $\mathcal{C}=\oplus_i \mathcal{C}_i$ can be treated as a bicomodule with $\mathcal{C}$ being minimal coalgebra for itself. In this case  ${}_L \Delta_{R}(c)=c_{(1)}\otimes c_{(2)}\otimes c_{(3)}$ and $\mathcal{C}_{ij}=0$ for $i\neq j$ and $\mathcal{C}_{ii}=\mathcal{C}_i $.  
 \end{ex}

 \begin{lema}\label{pr_UPU(C1otimesC2)}
 	For any decomposition \footnote{In practice, we are most interested in some special (canonical) decompositions, e.g., into indecomposable or irreducible components \cite{Xu,Montgomery, Zhang}. However, this is not a subject of the present publication.} $\mathcal{C}=\bigoplus_i\mathcal{C}_i$, its universal bicomodule  decomposes accordingly into subbicomodules:
 	\begin{eqnarray}\label{Udecomp}
 		\Upsilon_\mathcal{C}^U&=&\bigoplus_i\Upsilon_{\mathcal{C}_i}^U\oplus\bigoplus_{i\neq j}\mathcal{C}_i\otimes\mathcal{C}_j,
 	\end{eqnarray}
 	\begin{proof}
 		We know that bicomodule $\mathcal{C}\otimes\mathcal{C}=\bigoplus_{i,j}\mathcal{C}_i\otimes\mathcal{C}_j$ decomposes as a vector space into subbicomodules. From the other hand $\mathrm{Im}\Delta=\oplus_i\mathrm{Im}\Delta_i \subset\bigoplus_i\mathcal{C}_i\otimes\mathcal{C}_i$. These imply \eqref{Udecomp} after factorization as well as a family of canonical injections $\xi_{ij}$ of bicomodule morphisms ; $\bar\xi_{ii}:\Upsilon_{\mathcal{C}_i}\rightarrow\Upsilon_\mathcal{C}^U$, for $i\neq j$
 		$\bar\xi_{ij}: \mathcal{C}_i\otimes\mathcal{C}_j\rightarrow\Upsilon_\mathcal{C}^U$. Thus
 		\begin{eqnarray}
 		 \delta^U_\mathcal{C}\circ\bar{\xi}_{ii}&=& \delta^U_{\mathcal{C}_i}\,,\\
 			\delta^U_\mathcal{C}\circ\bar{\xi}_{ij}&=&\mathrm{id}_i\otimes\varepsilon_j-\varepsilon_i\otimes\mathrm{id}_j\quad \mbox{for}\quad i\neq j
 		\end{eqnarray}
 		where $\mathrm{Im}(\delta^U_\mathcal{C}\circ\bar{\xi}_{ij}) \subset\mathcal{C}_i\oplus\mathcal{C}_j \supset \mathrm{Im}(\delta^U_\mathcal{C}\circ\bar{\xi}_{ji})$.
 	\end{proof}
 \end{lema}
 \begin{rrm}
 	It follows from the above considerations that for $i\neq j$ the vector space $\mathcal{C}_i\otimes\mathcal{C}_j\oplus \mathcal{C}_j\otimes\mathcal{C}_i $ has $\mathcal{C}_i\oplus\mathcal{C}_j$ bicomodule structure with the codifferential 
 		\begin{equation}
 		\delta^U_\mathcal{C}\circ(\bar{\xi}_{ij}+\bar{\xi}_{ji})=(\mathrm{id}_i\otimes\varepsilon_j-\varepsilon_i\otimes\mathrm{id}_j) +( \mathrm{id}_j\otimes\varepsilon_i-\varepsilon_j\otimes\mathrm{id}_i)\,.
 	\end{equation}
 	As a vector space it decomposes into antisymmetric and symmetric part $\mathcal{C}_i\wedge\mathcal{C}_j\bigoplus \mathcal{C}_j\odot\mathcal{C}_i $, where the Grassmannian (wedge) product  $\mathcal{C}_i\wedge\mathcal{C}_j =\mathrm{Span}\{a\otimes b-b\otimes a|a\in \mathcal{C}_i\,,b\in\mathcal{C}_j\}$. It is ease to observe that the symmetric subspace $\mathcal{C}_i\odot\mathcal{C}_j =\mathrm{Span}\{a\otimes b+b\otimes a|a\in \mathcal{C}_i\,,b\in\mathcal{C}_j\}$ belongs to the kernel of $\delta^U_\mathcal{C}$. For $i=j$ one gets the standard decomposition  $\mathcal{C}_i\otimes\mathcal{C}_i=\mathcal{C}_i\wedge\mathcal{C}_i\bigoplus \mathcal{C}_i\odot\mathcal{C}_i $.
 \end{rrm}

  \subsection{Cocommutative case}
 We recall that on a tensor product of two arbitrary vector spaces one can define the canonical flip (switch) transformation
 $\tau: V\otimes W\rightarrow W\otimes V$ by $\tau (a\otimes b)=b\otimes a$. It satisfies $\tau^2=\mathrm{id}$ as well the celebrated braid relation
 \begin{equation}\label{braid}
 	\hat\tau\doteq(\mathrm{id}\otimes\tau)\circ(\tau\otimes \mathrm{id})\circ(\mathrm{id}\otimes\tau)=(\tau\otimes \mathrm{id})\circ(\mathrm{id}\otimes\tau)\circ (\tau\otimes \mathrm{id})
 \end{equation}
 on $V\otimes W\otimes Z$. It defines the canonical braiding $\hat\tau:V\otimes W\otimes Z\rightarrow Z\otimes W\otimes V$ by $\hat\tau(v\otimes w\otimes z)=z\otimes w\otimes v$.
 We recall that any right $\mathcal{C}$ comodule $(M, \Delta_R)$ becomes automatically a left $\mathcal{C}^\mathrm{cop}$ comodule with the coaction $\Delta^\tau_R=\tau\circ\Delta_R$, where the coalgebra $\mathcal{C}^\mathrm{cop}$ has the same counit and flipped coproduct $\Delta^\tau=\tau\circ\Delta$. Similarly, $\Delta^\tau_L=\tau\circ\Delta_L$ becomes a right $\mathcal{C}^\mathrm{cop}$ coaction. Therefore, for cocommutative coalgebras ($\Delta=\Delta^\tau$), every left (resp. right) comodule becomes automatically a cocommutative bicomodule if we set $\Delta_R=\tau\circ\Delta_L$ (resp. $\Delta_L=\tau\circ\Delta_R$)
 
 In general, one can introduce a cocommutator subspace for a bicomodule $M$ over coalgebra $\mathcal{C}$ (see \cite{Quillen1988,Khalkhali1996}) \footnote{Further, we shall use the same symbol to denote the canonical inclusion $\natural: M^\natural\rightarrow M$.}:
 \begin{eqnarray}
 	M^\natural&:=&\mathrm{Ker}(\Delta_L-\tau\circ\Delta_R)=\mathrm{Ker}(\tau\circ\Delta_L-\Delta_R)\,.
 \end{eqnarray}
 The subspace $M^\natural$ (which can be trivial -- see example~\ref{ex_P_cA1}) is not a subbicomodule; however, for cocommutative coalgebras, $\Delta=\Delta^\tau$, it is. Before showing this let us notice that the vector space decomposition
 \begin{equation}
 	\Upsilon^{U}_\mathcal{C}=\Upsilon^{U\wedge}_\mathcal{C}\oplus\Upsilon^{U\odot}_\mathcal{C}\,,
 \end{equation}
 into antisymmetric and symmetric subspaces, can be obtained from the obvious decomposition 
 $\mathcal{C}\otimes\mathcal{C}=\mathcal{C}\wedge\mathcal{C}\oplus \mathcal{C}\odot\mathcal{C}$,
 where    $\Upsilon^{U\wedge}_\mathcal{C}\doteq\pi(\mathcal{C}\wedge\mathcal{C})$ and $\Upsilon^{U\odot}_\mathcal{C}\doteq\pi(\mathcal{C}\odot\mathcal{C})$.  
 The Grassmannian (wedge) product  $\mathcal{C}\wedge\mathcal{C} =\mathrm{Span}\{a\otimes b-b\otimes a|a, b\in \mathcal{C}\}$ while the symmetric subspace $\mathcal{C}\odot\mathcal{C} =\mathrm{Span}\{a\otimes b+b\otimes a|a, b\in \mathcal{C}\}$.  
 It is ease to observe that $\Upsilon^{U\odot}_\mathcal{C}$ belongs to the kerner of $\delta^U_\mathcal{C}$.   
 \begin{pr} \label{natural} 
 	Let $\mathcal{C}$ be a cocommutative coalgebra, then for  any bicomodule  $M\in {}^{\mathcal{C}}\mathfrak{M}^{\mathcal{C}} $  
 	the cocommutator subspace $M^\natural\subset M$ is the biggest subbicomodule for which $\Delta_L=\Delta^\tau_R$.
 	
 	Additionally, for any coderivation $\delta: M\rightarrow \mathcal{C}$  one gets
 	$\hat\delta (M^\natural)\subseteq \Upsilon^{U\natural}_\mathcal{C}\subset \Upsilon^{U\wedge}_\mathcal{C}$.
 	
 	Moreover, if $\widehat{\delta}(M)\subset\Upsilon_\mathcal{C}^{U\wedge}$, then $\widehat{\delta}(M)\subseteq\Upsilon_\mathcal{C}^{U\natural}$.
 	\begin{proof}
 		Since the algebra is cocommutative we have two left coaction $\Delta_L$ and $\Delta^\tau_R$  (equivalently, two right coactions  $\Delta_R$ and $\Delta^\tau_L$), which coincide on $M^\natural$, i.e., 
 		\begin{eqnarray}
 			\Delta_L(m)&=& \Delta^\tau_R(m)=\sum_k c_k\otimes m_k
 		\end{eqnarray}
 		where the sum is finite and we can assume without loosing generality that $c_k\in \mathcal{C}$ are linearly independent. We have to show that $\Delta_L(M^\natural)=\mathcal{C}\otimes M^\natural$ or $\Delta_L(m_k)=\Delta^\tau_R(m_k)$. 
 		Applying $(\Delta\otimes \mathrm{id})=(\Delta^\tau\otimes \mathrm{id})$ to both sides of the last equation  one gets   $\sum_k c_k\otimes \Delta_L(m_k)=\sum_k c_k\otimes \Delta^\tau_R(m_k)$. 
 		The first embedding results from the fact that   $\hat\delta$ is a bicomodule map. Indeed, $\Delta^U_R\circ\hat\delta=(\hat\delta\otimes\mathrm{id})\circ\Delta_R=(\hat\delta\otimes\mathrm{id})\circ\Delta^\tau_L=\Delta^{U\tau}_L\circ\hat\delta$. To show the second embedding we assume that
 		$\sum_i a^i_{(1)}\otimes[a^i_{(2)}\otimes b^i]=\sum_i b^i_{(2)}\otimes[a^i\otimes b^i_{(1)}]$. Applying now $\pi\circ(\mathrm{id}\otimes \delta^U)$ to both sides one gets $\sum_i  [a^i\otimes b^i]=-\sum_i [b^i\otimes a^i]$\,.
 		For the last embedding, we assume that for all elements in $\hat\delta(M)$: $\sum_i  [a^i\otimes b^i]=-\sum_i [b^i\otimes a^i]$. Now we can check:
 		\begin{eqnarray}
 			\Delta_L(\sum_i[a^i\otimes b^i])&=&-\Delta_L(\sum_i [b^i\otimes a^i])=-\sum_ib^i_{(1)}\otimes[b^i_{(2)}\otimes a^i]\nonumber\\
 			&=&\sum_ib^i_{(2)}\otimes[a^i\otimes b^i_{(1)}]=\Delta_R^\tau(\sum_i[a^i\otimes b^i])\,.
 		\end{eqnarray}
 	\end{proof}
 \end{pr}
 \begin{ex} (\cite{Quillen1988,Khalkhali1996})
 	$(\mathcal{C}\otimes\mathcal{C})^\natural=\mathrm{Im}(\Delta^\tau)\cong\mathcal{C}$ as a vector space and $\pi((\mathcal{C}\otimes\mathcal{C})^\natural)\subset\ker\delta^U$. More generally, for any vector space $V$ and bifree bicomodule $(\mathcal{C}\otimes V\otimes\mathcal{C})^\natural\cong \mathcal{C}\otimes V$.
 	\begin{proof}
 		Applying $(\mathrm{id}\otimes\tau)\circ(\tau\otimes \mathrm{id})$ to the coassociativity constraints one concludes that $\mathrm{Im}(\tau\circ\Delta)\subset (\mathcal{C}\otimes\mathcal{C})^\natural$. Coversely, let $\sum_i a^i\otimes b^i\in (\mathcal{C}\otimes\mathcal{C})^\natural$, i.e.
 		$\sum_i (a^i)_{(1)}\otimes  (a^i)_{(2)}\otimes b^i =\sum_i  (b^i)_{(2)}\otimes a^i\otimes  (b^i)_{(1)}$. Applying now $\mathrm{id}\otimes\varepsilon\otimes\mathrm{id} $ to both sides yields\\
 		$\sum_i a^i\otimes b^i= \sum_i \varepsilon(a^i)(\tau\circ\Delta )(b^i)$.	 
 	\end{proof} 
 	Of course, for cocommutative coalgebra $(\mathcal{C}\otimes\mathcal{C})^\natural=\mathrm{Im}(\Delta)=\ker\pi$ is a subbicomodule, where $\pi:\mathcal{C}\otimes\mathcal{C}\rightarrow \Upsilon^U_\mathcal{C}$.
 \end{ex}
 In the case of direct sum decomposition one finds
  \begin{lema}
 	If $M=\oplus_i M_i$ then its cocommutator subspace $M^\natural=\oplus_i M_i^\natural$, where $M, M_i\in {}^{\mathcal{C}}\mathfrak{M}^{\mathcal{C}}$.
 	\begin{proof}
 		It is clear that $M^\natural\supset\oplus_i M_i^\natural$. We show the opposite. Let $M^\natural\ni m=\sum_k m_k$, where $m_k\in M_k$ and the sum is finite. Then
 		\begin{eqnarray}
 			\Delta_L(\sum_k m_k) = \Delta^\tau_R(\sum_k m_k) &= &\sum_k\Delta_L\circ\bar\xi_k (m_k)=\sum_k \Delta_R^\tau\circ \bar\xi_k (m_k)\nonumber\\
 			=	\sum_k(\mathrm{id}\otimes\bar\xi_k)\circ\Delta^k_L  (m_k)&=& 	\sum_k(\mathrm{id}\otimes\bar\xi_k)\circ\Delta_R^{k\,\tau}(m_k)\in \bigoplus_k\mathcal{C}\otimes M_k\,,
 		\end{eqnarray}
 		where $\bar\xi_k: M_k\rightarrow M$ are canonical injections. Since $m_k$ are linearly independent and all maps are injective, then one gets  the result.  
 	\end{proof}
 \end{lema}
 \begin{pr}
 	Let $\mathcal{C}=\bigoplus_i\mathcal{C}_i$ be a decomposition of cocommutative coalgebra and $M=\bigoplus M_{ij}$ the corresponding decomposition of some bicomodule $M$.  Then
 	\begin{eqnarray}
 		M^\natural&=&\bigoplus_i M^\natural_{ii}\,,
 	\end{eqnarray}
 	where $M_{ij}$ are the subbimodules from Proposition \ref{pr_bicomodul_decom}. In particular, applying this result to the universal bicomodule one gets:
 	\begin{eqnarray}
 		\Upsilon^{U\natural}_\mathcal{C}&=&\bigoplus_i\Upsilon^{U\natural}_{\mathcal{C}_i}\,.
 	\end{eqnarray}
 	\begin{proof}
 		From the last Lemma, we have $M^\natural=\bigoplus_{ij} M_{ij}^\natural$. However, the cocommutator subspace $M_{ij}^\natural$ can be non trivial only for $i=j$ since
 		\begin{eqnarray} 
 			\Delta_L(M_{ij})\subset\mathcal{C}_i\otimes M_{ij}\,\quad \mbox{and}\quad\Delta_R^\tau (M_{ij})&\subset&\mathcal{C}_j\otimes M_{ij}\,.
 		\end{eqnarray}
 		It implies  $\mathrm{Ker}(\Delta_L-\Delta_R^\tau)\cap M_{i\neq j}=0$. So the statement.
 	\end{proof}
 \end{pr}
 We could say that for cocommutative coalgebras $\mathcal{C}$, the $\Upsilon^{U\natural}$ is universal cocommutative FOCC. One can call it a co-K\"ahler FOCC \footnote{We recall that K\"ahler differential is a universal one for differential calculi over commutative algebras with equal right and left action on forms.}'
 
 
 	

\subsection{Some notes about classification of FOCCs}

In fact, following Proposition \ref{Doi} and Remark \ref{rrm_morphism}, classification of FOCC over some coalgebra $\mathcal{C}$ can be reduced to the classification of subbicomodules of the universal one  up to an isomorpism. We consider two subbicomodules isomorphic $\Upsilon_1\cong \Upsilon_2\subset \Upsilon^U_\mathcal{C}$ if there exists an automorphism $\phi: \mathcal{C}\rightarrow \mathcal{C}$ such that $[\phi\otimes\phi](\Upsilon_1)=\Upsilon_2$. It is well-known known that right (resp. left) $\mathcal{C}$-(sub)comodules can be equivalently characterized  as a left  (resp. right) (sub)modules over the algebra $\mathcal{C}^*$. Therefore, one can use the  well established terminology borrowed from  the general theory of  comodules and their decompositions, see e.g.  \cite{Montgomery,Zhang,Xu}.\footnote{However, in the current literature, similar methodology is not used for bicomodules.}
Nevertheless, it is convenient to consider  subbicomudules $\Upsilon \subset \Upsilon^U_\mathcal{C}$ equivalently as a subbimodules over the algebra $\mathcal{C}^*$ (see Appendix A, \eqref{a1})
\begin{equation}\label{a1bis}
	\alpha\star [a\otimes b]\star\beta\doteq(\beta\otimes\mathrm{id}\otimes\alpha)\circ {}_L \Delta^U_{R}([a\otimes b])=[a_{(2)}\otimes b_{(1)}]\alpha(b_{(2)})\beta(a_{(1)})\,.
\end{equation}
Then
\begin{equation}\label{a1delta}
\delta^U(\alpha\star [a\otimes b]\star\beta)=(\varepsilon(a_{(2)}) b_{(1)}- \varepsilon(b_{(1)}) a_{(2)} )\alpha(b_{(2)})\beta(a_{(1)})=  \beta(a)\alpha\star b -\alpha(b) a\star\beta\,.
\end{equation}

Further, we need the notion of generating  space.
 \begin{pr}Let $V$ be a non-trivial vector subspace $V\subset \Upsilon^U_\mathcal{C}$. We denote by $\Upsilon(V)$ the smallest subbicomodule containing $V$, which is called a generating space for $\Upsilon(V)\supset V$. Then 
    	\begin{eqnarray}\label{spanBimodule}
    		\Upsilon(V)=\mathrm{Span}\{  \alpha\star v\star\beta | v\in V, \alpha,\beta\in \mathcal{C}^* \}\,.    		
    	\end{eqnarray}
    	\begin{proof}
    		The proof follows from the $\mathcal{C}$-comodule -- $\mathcal{C}^*$-module correspondence as presented in Appendix A, \eqref{a1}.
    		\end{proof}
    \end{pr}
    
    We call $V$ a maximal generating space if $V=\Upsilon(V)$. For a given subbicomodule, can exist many non-isomorphic generating spaces, but only one maximal. 
 
\begin{lema}\label{lem_gen_space}
	Let $\Upsilon(V)$  be as above.  Thus the following holds:\\
	i)
	\begin{eqnarray}\label{subset}
		U\subset V&\Rightarrow& \Upsilon(U)\subseteq \Upsilon(V).
	\end{eqnarray}
	ii)  
	\begin{eqnarray}\label{sum}
		\Upsilon(\oplus_i V_i)&=&\mathbf{+}_i\;\Upsilon(V_i).
	\end{eqnarray}
	i.e. the internal direct sum is not, in general, preserved by this operation. \\
iii) If $\phi\in \mathrm{Aut}(\mathcal{C})$ then $\Upsilon([\phi\otimes\phi](V))=[\phi\otimes\phi](\Upsilon(V))\cong\Upsilon(V)$ (c.f. Remark \ref{rrm_morphism}).
\begin{proof}
	First two properties follow directly from \eqref{spanBimodule}. Third one comes from the formula (c.f. \eqref{a4})
	\begin{equation}
		[\phi\otimes\phi](\alpha\star[a\otimes b]\star\beta)=(\phi^{-1})^*(\alpha)\star[\phi(a)\otimes\phi(b)]\star(\phi^{-1})^*(\beta)\,.
	\end{equation}
	
\end{proof}
\end{lema}
\begin{rrm}\label{Singleton}
	It follows from  \eqref{sum} that one dimensional subspaces play a fundamental role in this decomposition. Any higher dimensional subspace generates  subbicomodule of $\Upsilon^U_\mathcal{C}$ that is a sum of subbicomodules generated by one-dimensional ones. 
	Therefor we can focus on one-dimensional subspaces $<v>\doteq \{ \mathbb{K}\upsilon\}$ as a generator of subbimodules $\Upsilon <v>$, where $0\neq v\in \Upsilon^U_\mathcal{C}$ are called singleton in \cite{Xu}.
	One should notice that $<v>\in\mathbb{P}(\Upsilon^U_\mathcal{C})$ can be seen as an element of the projective space associated with the vector space $\Upsilon^U_\mathcal{C}$. Thus, making use of some decomposition $v=\sum_i \lambda_i v_i\in +_{\lambda_i\neq 0} <v_i>$ one gets
	\begin{equation}\label{singleton}
		\Upsilon<\sum_i \lambda_i v_i>\subseteq +_i \Upsilon<\lambda_i v_i>= +_{\lambda_i\neq 0} \Upsilon<v_i>\,.
	\end{equation}
\end{rrm}

\begin{defi}
    A finite-dimensional bicomodule $M$ is called elementary if there are \underline{no} proper subbicomodules $M_1,M_2\subset M$, such that $M=M_1+M_2$.
\end{defi}
It is important to consider \underline{proper} subbicomodules $M_1,M_2\subset M$, since requirement $M=M_1+M_2$ would be trivial otherwise. One can also verify that every finite-dimensional  elementary  bicomodule can be generated by a singleton.
\begin{rrm}
Finally, let us consider cocommutative coalgebra $\mathcal{C}$,and take $V\subset\Upsilon^{U\natural}_\mathcal{C}$. Then
	\begin{eqnarray}\label{spanBimoduleComm}
	\Upsilon^\natural(V)=\mathrm{Span}\{\alpha\star v| v\in V, \alpha\in \mathcal{C}^*\}
    =\mathrm{Span}\{v\star\alpha| v\in V, \alpha\in \mathcal{C}^*\}\,    		
\end{eqnarray}
is the smallest cocommutative subbicomodule in $\Upsilon^{U\natural}_\mathcal{C}$ containing $V$.
\end{rrm}

In the following examples, we attempt to classify, up to coalgebra automorphisms of $\mathcal{C}$ singleton-generated subbicomodules.
Such an approach provides the classification of finite-dimensional FOCC - unless we consider an infinite sum of subbimodules.

%

 \section{Illustrative examples}
 	
 	

 \begin{ex}\label{ex_M2x2_coalgebra}
     Consider 4-dimensional coalgebra of $2\times 2$ matrices $\mathcal{C}\doteq\mathrm{Span}\{x,u,v,y\}$, with the coproduct:
     \begin{eqnarray}
         \Delta x=x\otimes x+u\otimes v,&\qquad&\Delta u=x\otimes u+u\otimes y,\\
         \Delta v=y\otimes v+v\otimes x,&\qquad&\Delta y=y\otimes y+v\otimes u,
     \end{eqnarray}
     and $\varepsilon (x)=\varepsilon (y)=1$, $\varepsilon (u)=\varepsilon (v)=0$. there is an involutive coalgebra automorphism $\phi:(x\leftrightarrow y, u\leftrightarrow v)$.

     There are only two non-trivial, non-isomorphic FOCC and both are not only elementary, but they are also simple (does not have any subbicomodules)\footnote{Any singleton in a simple bicomodule generates it whole.}:
    \begin{eqnarray}
        \Upsilon<[y\otimes x]>&=&\mathrm{span}\{[y\otimes x],[y\otimes u],[u\otimes x],[u\otimes u]\},\\
        \Upsilon<[x\otimes x]>&=&\mathrm{span}\{[x\otimes x],[x\otimes u],[v\otimes x],[v\otimes u]\}.
    \end{eqnarray}
    One can check that $\Upsilon<[x\otimes x]>=\Upsilon<[y\otimes y]>$. The universal calculus  decomposes into:
    \begin{eqnarray}
        \Upsilon_\mathcal{C}^U&=&\Upsilon<[x\otimes x]>\oplus\Upsilon<[y\otimes x]>\oplus[\phi\otimes\phi](\Upsilon<[y\otimes x]>).
    \end{eqnarray}
    We know that the image od exterior coderivation is always coideal. In this case:
    \begin{eqnarray}
        \delta^U({\Upsilon<[y\otimes x]>})=\mathrm{Span}\{x-y,u\},&\qquad&\delta^U({\Upsilon<[x\otimes x]>})=\mathrm{Span}\{u,v\}.
    \end{eqnarray}
    In this example, one can find the coideal $\mathcal{I}=\mathrm{span}\{u\}$ which is not the image of any FOCC, c.f. Remark \ref{coideal}. \footnote{This is no longer true for the 3-dimensional upper-triangular matrix coalgebra, i.e. when we set $v=0$.}
 \end{ex}

 \subsection{Sweedler coalgebra}\label{ex_sweedler_coalgebra}
 We classify all elementary FOCCs for Sweedler coalgebra $H=\mathrm{span}\{1,g,X,Xg\}$:
\begin{eqnarray}
    \Delta 1=1\otimes 1,&\qquad&\Delta g=g\otimes g,\\
    \Delta X=X\otimes 1+g\otimes X,&\qquad&\Delta Xg=Xg\otimes g+1\otimes Xg
\end{eqnarray}
with $\varepsilon1=\varepsilon g=1$, $\varepsilon X=\varepsilon Xg=0$\footnote{It is in fact Hopf algebra, however in this section we are interested only in its coalgebraic structure. We will come back to this example as Hopf algebra later on.}.

There exists a coalgebra (involutive) automorphism $\phi$:
\begin{eqnarray}
    \left[\begin{array}{c}1\\X\end{array}\right]&\leftrightarrow\left[\begin{array}{c}g\\Xg\end{array}\right]
\end{eqnarray}
which will be used to reduce the number of solutions. Below, we present all the elementary FOCCs:

\begin{enumerate}
    \item Infinite family of 1-dimensional cocalculi parametrized by   $[\alpha:\beta]\in \mathbb{P}(\mathbb{K}^2)$
    \begin{eqnarray}
        \Upsilon<\alpha[g\otimes 1]+\beta[X\otimes 1]>&=&\mathrm{Span}\{\alpha[g\otimes 1]+\beta[X\otimes 1]\}.
    \end{eqnarray}
    \item Only two 2-dimensional  FOCCs
    \begin{eqnarray}
        \Upsilon<[1\otimes X]>&=&\mathrm{Span}\{[1\otimes X],[1\otimes g]\},\\
        \Upsilon<[Xg\otimes 1]>&=&\mathrm{Span}\{[Xg\otimes 1],[g\otimes 1]\}.
    \end{eqnarray}
    \item Two families of 3-dimensional cocalculi, $\gamma\neq 0, (\alpha,\beta)\in\mathbb{K}^2$
    \begin{eqnarray}
         \Upsilon<[1\otimes X]+\frac{1}{\gamma}[Xg\otimes 1]>
        =\mathrm{Span}\{[1\otimes X]+\frac{1}{\gamma}[Xg\otimes 1],[1\otimes g],[g\otimes 1]\},\end{eqnarray}
    \begin{eqnarray}
       &&\Upsilon<[Xg\otimes X]+\alpha[1\otimes X]+\beta[Xg\otimes 1]>=\nonumber\\
        &&\qquad\mathrm{Span}\{[Xg\otimes X]+\alpha[1\otimes X]+\beta[Xg\otimes 1],\nonumber\\
        &&\qquad\phantom{=span\{}[1\otimes Xg]+\alpha[1\otimes g],[X\otimes 1]+\beta[g\otimes 1]\}.
    \end{eqnarray}
    \item Two-parameter family of 4-dimension  FOCCs $(\alpha,\beta)\in\mathbb{K}^2$
    \begin{eqnarray}
        &&\Upsilon<[X\otimes X]+\alpha[g\otimes 1]+\beta[X\otimes 1]>=\nonumber\\
        &&\qquad\mathrm{Span}\{[X\otimes X]+\alpha[g\otimes 1]+\beta[X\otimes 1],[X\otimes g],[1\otimes X],[1\otimes g]\}.
    \end{eqnarray}
\end{enumerate}
We note that FODCs on more general Radford Hopf algebras have recently been studied in \cite{Weber2025}.

 \subsection{Coalgebra generated by a vector space}
 
 For given vector space $\mathbf{V}$, the space $\mathcal{C}_\mathbf{V}:=\mathbb{K}\oplus\mathbf{V}$ has well defined cocommutative coalgebraic structure with one group-like element $1$ and every vector $v\in\mathbf{V}$ having a primitive coproduct.
 The universal bicoalgebra decomposes accordingly
 $$\Upsilon^U_{\mathcal{C}_\mathbf{V}}=\pi(\mathbf{V}\otimes\mathbf{V})\oplus \pi(\mathbf{V}\otimes 1)$$
 into the vector subspace $\pi(\mathbf{V}\otimes\mathbf{V})\subset [\ker\varepsilon\otimes\ker\varepsilon]\subset\ker\delta^U$ and the cocommutative subbicomodule $[\mathbf{V}\otimes 1]\subset \Upsilon^{U\natural}_{\mathcal{C}_\mathbf{V}}$.
 Automorphisms of $\mathbf{V}$  are in one-to-one correspondence with automorphisms of $\mathcal{C}_\mathbf{V}$. Therefore, all elementary singletons of the form $<[x\otimes 1]>\,, 0\neq x\in \mathbf{V}$ generate
$$ \Upsilon(<[x\otimes 1]>) =\mathrm{Span}\{[x\otimes 1]\}=\mathrm{Span}\{[1\otimes x]\}\subset\Upsilon^{U\natural}_{\mathcal{C}_\mathbf{V}}$$
isomorphic cocommutative 1-dimensional subbicomodules.

By choosing an unordered basis $\mathbf{V}=\mathrm{Span}\{x_1,x_2,\ldots,x_N\}
$ we reduce the number of automorphisms (for simplicity we may assume that the dimension $N$ is finite).

Singletons $\hat \omega=\sum_{i=1,j=1}^{n,m} \omega^{ij}[x_i\otimes x_j]$ from $\pi(\mathbf{V}\otimes\mathbf{V})$ represent bilinear forms on $\mathbf{V}^*$. They generate indecomposable (even elementary) FOCCs
$$  \Upsilon(<\hat \omega>) =\mathrm{Span}\{\hat\omega, [x_i\otimes 1], [v_i\otimes 1]| i=1\ldots n\}= \mathrm{Span}\{\hat\omega, [x_j\otimes 1], [u_j\otimes 1]| j=1\ldots m\}$$
where $v_i=\sum_{j=1}^m\omega^{ij}x_j$,   $u_j=\sum_{i=1}^m \omega^{ij}x_i$ (some spanning vectors $v_i$ or $u_j$ might be linearly dependent). If $\hat\omega$ is symmetric then $m=n$ and $v_i=u_i$.
If it is antisymmetric then $m=n$ and $v_i=-u_i$ and, moreover,  $\Upsilon<\hat \omega>\ \subset \Upsilon^{U\natural}_{\mathcal{C}_\mathbf{V}}$. One can check, that for both symmetric and antisymmetric elements $\hat\omega$ we have $\dim\Upsilon<\hat\omega>=1+r$, where $r$ is a rank of the bilinear form $\omega$.

 \subsection{Divided power coalgebra}
 
Consider graded coalgebra generated by one primitive element $X$
 \begin{eqnarray}
 	\mathcal{C}&:=&\mathrm{Span}\{X^i;0\leq i\in \mathbb{N}\},
 \end{eqnarray}
 where ($X^0\equiv 1, X^1\equiv X$)\footnote{After the transformation $X^n\to \frac{1}{n!}X^n$ we would obtaint coproduct in a form $\Delta(X^n)=\sum_{i=0}^n\binom{n}{i}[X^i\otimes X^{n-i}]$. For simplicity, we used a basis without binomial coefficients.}:
 \begin{eqnarray}
 	\Delta(X^n)=\sum_{i=0}^nX^i\otimes X^{n-i}\,, \quad \varepsilon(x^n)=\delta^{0n}.
 \end{eqnarray}
 This is a Hopf algebra, but for now, we focus only on coalgebraic structure. This coalgebra does not have any automorphisms. 
 
 There is a coalgebra filtration $\mathcal{C}^{0}\subset\ldots\subset\mathcal{C}^{n-1}\subset\mathcal{C}^n \subset \ldots$ 
 of $\mathcal{C}$ by finite-dimensional subcoalgebras:
 \begin{eqnarray}
 	\mathcal{C}^n&:=&\mathrm{Span}\{X^i;0\leq i\leq n\}\,,\quad \dim\mathcal{C}^n=n+1\,.
 \end{eqnarray}
It induces the universal bicomodule filtration by finite-dimensional subbicomodules, which turns out to be elementary FOCCs generated by singletons:
\begin{eqnarray}
 	\Upsilon^{U}_{\mathcal{C}^n}=\pi(\mathcal{C}^n\otimes\mathcal{C}^n)
=\Upsilon<[X^n\otimes X^n]>=\mathrm{Span}\{[1\otimes X^i], [X^i\otimes X^j];1\leq i, j\leq n\}\,,
 \end{eqnarray}
 Notice that the spanning vectors are linearly independent. 
 In fact, all FOCCs   (c.f. example~\ref{ex_C1otimesC2/()}), $n+m\geq 1$:
 \begin{equation}
     \Upsilon<[X^n\otimes X^m]>=\pi(\mathcal{C}^n\otimes\mathcal{C}^m)\, 
 \end{equation}
 are non-isomorphic (except $\Upsilon<[X\otimes 1]>\cong\Upsilon<[1\otimes X]>$) and elementary. We can also calculate the dimension:
 \begin{eqnarray}
 	\dim\Upsilon<[X^n\otimes X^m]>&=&nm+\max(n,m).
 \end{eqnarray}
 Since this coalgebra is cocommutative, we are also interested in cocommutative FOCCs.
 \begin{pr}
    For any $m\geq 1$ there is a family of finite-dimensional cocommutative FOCCs for the coalgebra $\mathcal{C}^m$, $m\geq n\geq 1$, in the form:
     \begin{eqnarray}
         \Upsilon<\upsilon^n>=\mathrm{Span}\{\upsilon^k;1\leq k\leq n\}&\subseteq&\Upsilon^{U\natural}_{\mathcal{C}^m}\,,
     \end{eqnarray}
     where the vectors:
     \begin{eqnarray}
         \upsilon^k&=&\sum_{i=0}^{k-1}\left(1-\frac{i}{k}\right)[X^{k-i}\otimes X^i].
     \end{eqnarray}
do satisfy the antisymmetry property $\upsilon^k+ \tau(\upsilon^k)=[\Delta(X^k)]\equiv 0$.
     \begin{proof}
     The last formula determining antisymmetry can be checked by a direct calculation.
     The poof of the bicomodule property is more involved.
     We use formulas from Appendix E showing that for any $n\geq 1$:
         \begin{eqnarray}
             \Delta^U_L(\upsilon^n)&=&1\otimes\upsilon^n +\sum_{j=1}^{n-1}\left(1-\frac{j}{n}\right)X^j\otimes\upsilon^{n-j}\,,\label{E1}\\
             \Delta^U_R(\upsilon^n)&=&\upsilon^n\otimes 1+\sum_{j=1}^{n-1}\left(1-\frac{j}{n}\right)\upsilon^{n-j}\otimes X^j\,.\label{E2}
         \end{eqnarray}
         They show that  $\Upsilon<\upsilon^n>$ is closed under the left and right comultiplications, and consists of cococmmutative elements.
     \end{proof}
 \end{pr}
 Our calculation suggests that in fact $\Upsilon^{U\natural}_{\mathcal{C}^n}=\mathrm{span}\{\upsilon^i;1\leq i\leq n\}$, however, we did not find how to prove it. Again, one gets natural filtration of the family of cocommutative FOCCs:
 \begin{eqnarray} \Upsilon<\upsilon^1>\subset\Upsilon<\upsilon^2>\subset\cdots\subset\Upsilon<\upsilon^n>\subset\cdots\,.
 \end{eqnarray}
 Therefore, one also finds infinite-dimensional cocommutative FOCC:
 \begin{eqnarray}
     \Upsilon_\infty&:=&+_{n\in\mathbb{N}_+}\Upsilon<\upsilon^n>=\mathrm{Span}\{\upsilon^i;1\leq i\}\,.
 \end{eqnarray}

  \subsection{Set coalgebra}\label{ex_P_cA1}
 Set coalgebra is $\mathcal{C}=\mathbb{K}(\mathcal{O})=\bigoplus_{p\in\mathcal{O}}\mathcal{C}_p$, where $\mathcal{C}_p\cong\mathbb{K}$ are one-dimensional subcoalgebras  and the set $\mathcal{O}$ constitutes a basis for $\mathcal{C}$. To classify FOCCs for this example, we use lemma~\ref{pr_UPU(C1otimesC2)}. 
 As a result, one gets decomposition into one-dimensional subbicomodules:\footnote{ Although the set coalgebra is cocommutative,  it does not admit cocommutative FOCCs since $\Upsilon^{U\natural}_{\mathcal{C}_p}=0$.}
 \begin{eqnarray}
 	\Upsilon^U_{\mathbb{K}(\mathcal{O})}&=&\bigoplus_{\substack{p,q \in\mathcal{C}\\p\neq q}}(\mathcal{C}_p\otimes\mathcal{C}_q)=\bigoplus_{\substack{p,q \in\mathcal{C}\\p\neq q}}\Upsilon<[p\otimes q]>=\bigoplus_{\substack{p,q \in\mathcal{C}\\p\neq q}}<[p\otimes q]>  \,,
 \end{eqnarray}
 with $\delta^U([p\otimes q])=p-q$. Applying this decomposition, one can associate to any subset $\mathcal{I}\subset\{(p,q)\in\mathcal{O}\times\mathcal{O};p\neq q\}$ the corresponding FOCC  
 \begin{eqnarray}\label{set_singleton}
 	\Upsilon_\mathcal{I}&=&\bigoplus_{(p,q)\in\mathcal{I}}<[p\otimes q]>.
 \end{eqnarray} 
  decomposes into elementary singletons $<[p\otimes q]>$. In particular, for any non-elementary singleton (obviously with a finite number of non-zero coefficients $\alpha(p,q)$)
 \begin{eqnarray}\label{set_singlet}
 	\Upsilon\left(<\sum_{(p,q)\in\mathcal{I}}\alpha(p,q)[p\otimes q]>\right)&=&\bigoplus_{\alpha(p,q)\neq 0}<[p\otimes q]>\ \subset \Upsilon_\mathcal{I}
 \end{eqnarray}
 the dimension is equal to the number of non-zero coefficients, regardless of the values of $\alpha(p,q)$. Moreover, any finite-dimensional FOCC has this form. Here, we are interested in the classification of all such calculi.
 \begin{obs}
 	Every bijection $\phi$ on the generating set $\mathcal{O}$ prolongs to an automorphism of $ \mathbb{K}(\mathcal{O})$ and provides isomorphic calculi $[p\otimes q]\mapsto [\phi(p)\otimes\phi(q)]$. In particular, isomorphic calculi need to have the same dimension, however it is a not sufficient requirement. For example, the following two-dimensional FOCC are not isomorphic 
 	\begin{eqnarray}
 		\Upsilon_1&=&\Upsilon<[p\otimes q],[r\otimes s]>=\mathrm{Span}\{[p\otimes q],[r\otimes s]\}\,,\nonumber\\
 		\Upsilon_2&=&\Upsilon<[p\otimes q],[q\otimes p]>=\mathrm{Span}\{[p\otimes q],[q\otimes p]\}\,.\nonumber
 	\end{eqnarray}
 \end{obs}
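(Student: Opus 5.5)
The plan is to reduce isomorphism of calculi on $\mathbb{K}(\mathcal{O})$ to a combinatorial statement about the subsets $\mathcal{I}$ in \eqref{set_singleton}, viewed as directed graphs on $\mathcal{O}$ without loops. There are three steps.

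\emph{Step 1: every bijection of $\mathcal{O}$ induces an automorphism.} Let $\phi:\mathcal{O}\to\mathcal{O}$ be a bijection, extended linearly to $\mathbb{K}(\mathcal{O})$. On basis elements it satisfies $\Delta\phi(p)=\phi(p)\otimes\phi(p)=(\phi\otimes\phi)\Delta p$ and $\varepsilon\phi(p)=1=\varepsilon(p)$. Hence it is a coalgebra automorphism with inverse induced by $\phi^{-1}$. By Remark \ref{rrm_morphism} and Lemma \ref{lem_gen_space} iii), the map $[\phi\otimes\phi]$ is a bicomodule automorphism of $\Upsilon^U_{\mathbb{K}(\mathcal{O})}$, and it sends $[p\otimes q]\mapsto[\phi(p)\otimes\phi(q)]$. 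Consequently $[\phi\otimes\phi](\Upsilon_\mathcal{I})=\Upsilon_{(\phi\times\phi)(\mathcal{I})}$. Since the map is a linear isomorphism, isomorphic calculi have equal dimension.

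\emph{Step 2: every automorphism arises this way.} To show that dimension is not sufficient, I first prove the converse of Step 1. Let $g=\sum_p a_p\,p$ be group-like. Comparing $\Delta g=\sum_p a_p\, p\otimes p$ with $g\otimes g=\sum_{p,q}a_pa_q\, p\otimes q$ in the basis $p\otimes q$ gives $a_pa_q=0$ for $p\neq q$ and $a_p^2=a_p$. Together with $\varepsilon(g)=1$, this forces exactly one coefficient to equal $1$ and the others to vanish. So the group-like elements are exactly the elements of $\mathcal{O}$. A coalgebra automorphism maps group-likes bijectively onto group-likes, so it restricts to a bijection of $\mathcal{O}$ and is determined by that restriction.

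Next, the basis elements $[p\otimes q]$ with $p\neq q$ form a basis of $\Upsilon^U$. Hence the basis vectors contained in $\Upsilon_\mathcal{J}$ are exactly those with $(p,q)\in\mathcal{J}$, so $\mathcal{J}$ is recovered from $\Upsilon_\mathcal{J}$. It follows that $\Upsilon_\mathcal{I}\cong\Upsilon_\mathcal{J}$ if and only if $\mathcal{J}=(\phi\times\phi)(\mathcal{I})$ for some bijection $\phi$ of $\mathcal{O}$, i.e. the two directed graphs are isomorphic.

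\emph{Step 3: the example.} Here $\mathcal{I}_1=\{(p,q),(r,s)\}$, where $(r,s)\neq(q,p)$ (for instance $p,q,r,s$ pairwise distinct), and $\mathcal{I}_2=\{(p,q),(q,p)\}$. Consider the property ``$\mathcal{I}$ contains a pair together with its reverse''. It is invariant under $\phi\times\phi$, since $(\phi\times\phi)(b,a)$ is the reverse of $(\phi\times\phi)(a,b)$. The set $\mathcal{I}_2$ has this property and $\mathcal{I}_1$ does not, so $\Upsilon_1\not\cong\Upsilon_2$, although both are two-dimensional.

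The only genuinely non-formal point is Step 2, namely that no automorphism beyond permutations of $\mathcal{O}$ exists. The group-like computation settles it, so I expect no real obstacle.
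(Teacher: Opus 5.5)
Your proof is correct and follows the reasoning the paper has in mind. The paper states this Observation, and the graph criterion of Theorem~\ref{th_set_iso} that follows it, without proof, tacitly assuming that permutations of $\mathcal{O}$ are the only relevant automorphisms. Your Step~2 supplies exactly that missing ingredient for the non-isomorphism claim: the group-like elements of $\mathbb{K}(\mathcal{O})$ are precisely the elements of $\mathcal{O}$, so every coalgebra automorphism is a permutation of the basis. Combined with recovering $\mathcal{I}$ from $\Upsilon_\mathcal{I}$, this also proves Theorem~\ref{th_set_iso}.
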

 We are now in a position to provide a graphical method for the classification.  For any subbicomodule 
 we associate a directed graph $G(\mathcal{O},E)$, with $\mathcal{O}$ and $E$ being the set of vertices and edges, respectively, see e.g., \cite{Harary}. It means that we have a source map $s:~E\to \mathcal{O}$ and a target map $t:~E\to \mathcal{O}$\footnote{Source map returns the starting point of an edge and target map returns its ending point.}. 
  Every singleton in the subbicomodule produces an edge:
 \begin{eqnarray}
     (<[p\otimes q]>\subseteq\Upsilon)&\longleftrightarrow\left(\begin{tikzpicture}[baseline=(o)]
        \node[] at (0,-0.1) (o) {};
        \node[circle,draw,fill=black,scale=0.5] at (0,0) (q) {};
        \node[] at (-0.2,0.2) {$q$};
        \node[circle,draw,fill=black,scale=0.5] at (1,0) (p) {};
        \node[] at (1.2,0.2) {$p$};
        \draw[->] (q) to (p);
    \end{tikzpicture}\subseteq G(\mathcal{O},E) \right)\,
 \end{eqnarray}
 and represents a direct sum component.
 Loops (edges with $s(e)=t(e)$) are not allowed, and there can be at most one edge in a given direction between two points (i.e., two edges in the opposite directions are allowed).
 This leads to the following theorem:
 \begin{Th}\label{th_set_iso}
 	Let $\mathcal{C}$ to be set coalgebra. Two finite-dimensional FOCCs are isomorphic if, and only if they generate isomorphic graphs.
 \end{Th}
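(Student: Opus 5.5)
We identify each finite-dimensional FOCC with a subbicomodule $\Upsilon\subset\Upsilon^U_{\mathbb{K}(\mathcal{O})}$. This is legitimate by Proposition~\ref{Doi} and the classification remark following Definition~\ref{def_cdiff}. Isomorphism of two such subbicomodules means that some coalgebra automorphism $\phi$ of $\mathbb{K}(\mathcal{O})$ satisfies $[\phi\otimes\phi](\Upsilon_1)=\Upsilon_2$. The plan has three steps: (1) show every subbicomodule is spanned by the singletons $[p\otimes q]$ it contains, so it is determined by its edge set $E$; (2) show every coalgebra automorphism of $\mathbb{K}(\mathcal{O})$ is induced by a bijection of $\mathcal{O}$; (3) compare the two notions of isomorphism.

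\textbf{Step (1).} We use the decomposition $\Upsilon^U=\bigoplus_{p\neq q}\langle[p\otimes q]\rangle$ from Lemma~\ref{pr_UPU(C1otimesC2)}. The bicomodule structure is ${}_L\Delta_R[p\otimes q]=p\otimes[p\otimes q]\otimes q$. For a subbicomodule $\Upsilon$ and $u=\sum\alpha(p,q)[p\otimes q]\in\Upsilon$, apply $\chi_p\star(\cdot)\star\chi_q$ via \eqref{a1bis}, where $\chi_p\in\mathcal{C}^*$ is the dual basis functional. This extracts $\alpha(p,q)[p\otimes q]\in\Upsilon$, which is exactly formula \eqref{set_singlet}. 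Hence $\Upsilon=\bigoplus_{(p,q)\in E}\langle[p\otimes q]\rangle$, where $E=\{(p,q):[p\otimes q]\in\Upsilon\}$. We associate to $\Upsilon$ the graph $G(\mathcal{O},E)$ with edge $q\to p$, following the convention above. Loops are absent because $[p\otimes p]=[\Delta p]=0$, and the assignment $\Upsilon\mapsto G$ is a bijection onto loopless simple digraphs with finitely many edges.

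\textbf{Step (2).} This is the main point to check. A coalgebra automorphism maps group-like elements to group-like elements. The group-likes of $\mathbb{K}(\mathcal{O})$ are exactly the elements of $\mathcal{O}$, since group-likes are linearly independent and a group-like $g=\sum\lambda_p p$ satisfies $\sum\lambda_p p\otimes p=\sum\lambda_p\lambda_{p'}p\otimes p'$, forcing a single $\lambda_p=1$. So $\phi$ restricts to an injection $\mathcal{O}\to\mathcal{O}$, and applying the same argument to $\phi^{-1}$ makes it a bijection. Conversely, every bijection extends to an automorphism, as stated in the Observation.

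\textbf{Step (3).} For an automorphism $\phi$ induced by a bijection, $[\phi\otimes\phi][p\otimes q]=[\phi(p)\otimes\phi(q)]$. Therefore $[\phi\otimes\phi](\Upsilon_1)=\Upsilon_2$ holds iff $(p,q)\in E_1\Leftrightarrow(\phi(p),\phi(q))\in E_2$, which says exactly that $\phi$ is a directed-graph isomorphism $G_1\to G_2$ on the full vertex set $\mathcal{O}$. Conversely, a graph isomorphism is a bijection of $\mathcal{O}$ preserving edges. It prolongs to a coalgebra automorphism carrying $\Upsilon_1$ onto $\Upsilon_2$ by Step (1).

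Finally, the graphs must be understood on the whole vertex set $\mathcal{O}$, including isolated vertices. For finite $E$ this is harmless: an isomorphism between the edge-supporting subgraphs extends to all of $\mathcal{O}$ whenever the complements have equal cardinality, which is automatic if $\mathcal{O}$ is infinite or if both graphs live on the same finite $\mathcal{O}$. So isomorphism of the supporting subgraphs suffices, and the theorem follows.
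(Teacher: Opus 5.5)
Your argument is correct and follows the route the paper takes implicitly: the splitting $\Upsilon^U_{\mathbb{K}(\mathcal{O})}=\bigoplus_{p\neq q}\langle[p\otimes q]\rangle$ from Lemma~\ref{pr_UPU(C1otimesC2)}, formula \eqref{set_singlet}, and the Observation that bijections of $\mathcal{O}$ induce isomorphic calculi. There is one harmless slip: under the convention \eqref{a1bis}, $\chi_p\star(\cdot)\star\chi_q$ extracts the $[q\otimes p]$-component, so you should use $\chi_q\star(\cdot)\star\chi_p$. Your Step (2), which shows via group-like elements that every coalgebra automorphism of $\mathbb{K}(\mathcal{O})$ is a permutation of $\mathcal{O}$, is not spelled out in the paper but is exactly what the ``only if'' direction needs, so your version is the more complete one.
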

 We show how it works, classifying low-dimensional FOCCs. We count weak graphs separately\footnote{Directed graph is weak if it is connected as non-directed graph (\cite[chapter 16]{Harary}).}. 
There is only $N=1$ non-isomorphic one-dimensional FOCC represented by graph:
 \begin{eqnarray}
    \begin{tikzpicture}[baseline=(o)]
        \node[] at (0,-0.1) (o) {};
        \node[circle,draw,fill=black,scale=0.5] at (0,0) (p) {};
        \node[circle,draw,fill=black,scale=0.5] at (0,1) (q) {};
        \draw[->] (p) to (q);
    \end{tikzpicture}
 \end{eqnarray}
This means that all one-dimensional FOCCs are isomorphic.
 For two-dimensional FOCCs one finds $N=1+4$ different graphs:
 \begin{eqnarray}
    \begin{tikzpicture}[baseline=(o)]
        \node[] at (0,-0.1) (o) {};
        \node[circle,draw,fill=black,scale=0.5] at (-0.2,0) (p1) {};
        \node[circle,draw,fill=black,scale=0.5] at (-0.2,1) (q1) {};
        \node[circle,draw,fill=black,scale=0.5] at (0.2,0) (p2) {};
        \node[circle,draw,fill=black,scale=0.5] at (0.2,1) (q2) {};
        
        \draw[->] (p1) to (q1);
        \draw[->] (p2) to (q2);
    \end{tikzpicture}\qquad
    \begin{tikzpicture}[baseline=(o)]
        \node[] at (0,-0.1) (o) {};
        \node[circle,draw,fill=black,scale=0.5] at (-0.3,0) (p1) {};
        \node[circle,draw,fill=black,scale=0.5] at (0.3,0) (p2) {};
        \node[circle,draw,fill=black,scale=0.5] at (0,1) (q) {};
        
        \draw[->] (p1) to (q);
        \draw[->] (p2) to (q);
    \end{tikzpicture}\qquad
    \begin{tikzpicture}[baseline=(o)]
        \node[] at (0,-0.1) (o) {};
        \node[circle,draw,fill=black,scale=0.5] at (-0.3,0) (p1) {};
        \node[circle,draw,fill=black,scale=0.5] at (0.3,0) (p2) {};
        \node[circle,draw,fill=black,scale=0.5] at (0,1) (q) {};
        
        \draw[->] (p1) to (q);
        \draw[<-] (p2) to (q);
    \end{tikzpicture}\qquad
    \begin{tikzpicture}[baseline=(o)]
        \node[] at (0,-0.1) (o) {};
        \node[circle,draw,fill=black,scale=0.5] at (-0.3,0) (p1) {};
        \node[circle,draw,fill=black,scale=0.5] at (0.3,0) (p2) {};
        \node[circle,draw,fill=black,scale=0.5] at (0,1) (q) {};
        
        \draw[<-] (p1) to (q);
        \draw[<-] (p2) to (q);
    \end{tikzpicture}\qquad
    \begin{tikzpicture}[baseline=(o)]
        \node[] at (0,-0.1) (o) {};
        \node[circle,draw,fill=black,scale=0.5] at (0,0) (p) {};
        \node[circle,draw,fill=black,scale=0.5] at (0,1) (q) {};
        
        \draw[->,bend left=25] (p) to (q);
        \draw[<-,bend right=25] (p) to (q);
    \end{tikzpicture}
 \end{eqnarray}
There are $N=5+12$ non-isomorphic graphs for three-dimensional FOCCs:
 \begin{eqnarray}
    &\begin{tikzpicture}[baseline=(o)]
        \node[] at (0,-0.1) (o) {};
        \node[circle,draw,fill=black,scale=0.5] at (-0.4,0) (p1) {};
        \node[circle,draw,fill=black,scale=0.5] at (-0.4,1) (q1) {};
        \node[circle,draw,fill=black,scale=0.5] at (0,0) (p2) {};
        \node[circle,draw,fill=black,scale=0.5] at (0,1) (q2) {};
        \node[circle,draw,fill=black,scale=0.5] at (0.4,0) (p3) {};
        \node[circle,draw,fill=black,scale=0.5] at (0.4,1) (q3) {};
        
        \draw[->] (p1) to (q1);
        \draw[->] (p2) to (q2);
        \draw[->] (p3) to (q3);
    \end{tikzpicture}\qquad\begin{tikzpicture}[baseline=(o)]
        \node[] at (0,-0.1) (o) {};
        \node[circle,draw,fill=black,scale=0.5] at (-0.7,0) (p0) {};
        \node[circle,draw,fill=black,scale=0.5] at (-0.7,1) (q0) {};
        \node[circle,draw,fill=black,scale=0.5] at (-0.3,0) (p1) {};
        \node[circle,draw,fill=black,scale=0.5] at (0.3,0) (p2) {};
        \node[circle,draw,fill=black,scale=0.5] at (0,1) (q) {};

        \draw[->] (p0) to (q0);
        \draw[->] (p1) to (q);
        \draw[->] (p2) to (q);
    \end{tikzpicture}\qquad
    \begin{tikzpicture}[baseline=(o)]
        \node[] at (0,-0.1) (o) {};
        \node[circle,draw,fill=black,scale=0.5] at (-0.7,0) (p0) {};
        \node[circle,draw,fill=black,scale=0.5] at (-0.7,1) (q0) {};
        \node[circle,draw,fill=black,scale=0.5] at (-0.3,0) (p1) {};
        \node[circle,draw,fill=black,scale=0.5] at (0.3,0) (p2) {};
        \node[circle,draw,fill=black,scale=0.5] at (0,1) (q) {};

        \draw[->] (p0) to (q0);
        \draw[->] (p1) to (q);
        \draw[<-] (p2) to (q);
    \end{tikzpicture}\qquad
    \begin{tikzpicture}[baseline=(o)]
        \node[] at (0,-0.1) (o) {};
        \node[circle,draw,fill=black,scale=0.5] at (-0.7,0) (p0) {};
        \node[circle,draw,fill=black,scale=0.5] at (-0.7,1) (q0) {};
        \node[circle,draw,fill=black,scale=0.5] at (-0.3,0) (p1) {};
        \node[circle,draw,fill=black,scale=0.5] at (0.3,0) (p2) {};
        \node[circle,draw,fill=black,scale=0.5] at (0,1) (q) {};

        \draw[->] (p0) to (q0);
        \draw[<-] (p1) to (q);
        \draw[<-] (p2) to (q);
    \end{tikzpicture}\qquad
    \begin{tikzpicture}[baseline=(o)]
        \node[] at (0,-0.1) (o) {};
        \node[circle,draw,fill=black,scale=0.5] at (-0.4,0) (p0) {};
        \node[circle,draw,fill=black,scale=0.5] at (-0.4,1) (q0) {};
        \node[circle,draw,fill=black,scale=0.5] at (0,0) (p) {};
        \node[circle,draw,fill=black,scale=0.5] at (0,1) (q) {};

        \draw[->] (p0) to (q0);
        \draw[->,bend left=25] (p) to (q);
        \draw[<-,bend right=25] (p) to (q);
    \end{tikzpicture}&\nonumber\\
    &\begin{tikzpicture}[baseline=(o)]
        \node[] at (0,-0.1) (o) {};
        \node[circle,draw,fill=black,scale=0.5] at (-0.3,0) (p1) {};
        \node[circle,draw,fill=black,scale=0.5] at (0.3,0) (p2) {};
        \node[circle,draw,fill=black,scale=0.5] at (-0.3,1) (q1) {};
        \node[circle,draw,fill=black,scale=0.5] at (0.3,1) (q2) {};
        
        \draw[->] (p1) to (q1);
        \draw[->] (p2) to (q2);
        \draw[->] (q1) to (q2);
    \end{tikzpicture}\qquad
    \begin{tikzpicture}[baseline=(o)]
        \node[] at (0,-0.1) (o) {};
        \node[circle,draw,fill=black,scale=0.5] at (-0.3,0) (p1) {};
        \node[circle,draw,fill=black,scale=0.5] at (0.3,0) (p2) {};
        \node[circle,draw,fill=black,scale=0.5] at (-0.3,1) (q1) {};
        \node[circle,draw,fill=black,scale=0.5] at (0.3,1) (q2) {};
        
        \draw[->] (p1) to (q1);
        \draw[<-] (p2) to (q2);
        \draw[->] (q1) to (q2);
    \end{tikzpicture}\qquad
    \begin{tikzpicture}[baseline=(o)]
        \node[] at (0,-0.1) (o) {};
        \node[circle,draw,fill=black,scale=0.5] at (-0.3,0) (p1) {};
        \node[circle,draw,fill=black,scale=0.5] at (0.3,0) (p2) {};
        \node[circle,draw,fill=black,scale=0.5] at (-0.3,1) (q1) {};
        \node[circle,draw,fill=black,scale=0.5] at (0.3,1) (q2) {};
        
        \draw[->] (p1) to (q1);
        \draw[<-] (p2) to (q2);
        \draw[<-] (q1) to (q2);
    \end{tikzpicture}\qquad
    \begin{tikzpicture}[baseline=(o)]
        \node[] at (0,-0.1) (o) {};
        \node[circle,draw,fill=black,scale=0.5] at (-0.3,0) (p1) {};
        \node[circle,draw,fill=black,scale=0.5] at (0.3,0) (p2) {};
        \node[circle,draw,fill=black,scale=0.5] at (-0.3,1) (q1) {};
        \node[circle,draw,fill=black,scale=0.5] at (0.3,1) (q2) {};
        
        \draw[<-] (p1) to (q1);
        \draw[<-] (p2) to (q2);
        \draw[->] (q1) to (q2);
    \end{tikzpicture}\qquad
    \begin{tikzpicture}[baseline=(o)]
        \node[] at (0,-0.1) (o) {};
        \node[circle,draw,fill=black,scale=0.5] at (-0.3,0) (p1) {};
        \node[circle,draw,fill=black,scale=0.5] at (0.3,0) (p2) {};
        \node[circle,draw,fill=black,scale=0.5] at (0,0.45) (r) {};
        \node[circle,draw,fill=black,scale=0.5] at (0,1) (q) {};
        
        \draw[<-] (p1) to (r);
        \draw[<-] (p2) to (r);
        \draw[->] (r) to (q);
    \end{tikzpicture}\qquad
    \begin{tikzpicture}[baseline=(o)]
        \node[] at (0,-0.1) (o) {};
        \node[circle,draw,fill=black,scale=0.5] at (-0.3,0) (p1) {};
        \node[circle,draw,fill=black,scale=0.5] at (0.3,0) (p2) {};
        \node[circle,draw,fill=black,scale=0.5] at (0,0.45) (r) {};
        \node[circle,draw,fill=black,scale=0.5] at (0,1) (q) {};
        
        \draw[<-] (p1) to (r);
        \draw[<-] (p2) to (r);
        \draw[<-] (r) to (q);
    \end{tikzpicture}&\nonumber\\
    &\begin{tikzpicture}[baseline=(o)]
        \node[] at (0,-0.1) (o) {};
        \node[circle,draw,fill=black,scale=0.5] at (-0.3,0) (p1) {};
        \node[circle,draw,fill=black,scale=0.5] at (0.3,0) (p2) {};
        \node[circle,draw,fill=black,scale=0.5] at (0,0.45) (r) {};
        \node[circle,draw,fill=black,scale=0.5] at (0,1) (q) {};
        
        \draw[->] (p1) to (r);
        \draw[->] (p2) to (r);
        \draw[->] (r) to (q);
    \end{tikzpicture}\qquad
    \begin{tikzpicture}[baseline=(o)]
        \node[] at (0,-0.1) (o) {};
        \node[circle,draw,fill=black,scale=0.5] at (-0.3,0) (p1) {};
        \node[circle,draw,fill=black,scale=0.5] at (0.3,0) (p2) {};
        \node[circle,draw,fill=black,scale=0.5] at (0,0.45) (r) {};
        \node[circle,draw,fill=black,scale=0.5] at (0,1) (q) {};
        
        \draw[->] (p1) to (r);
        \draw[->] (p2) to (r);
        \draw[<-] (r) to (q);
    \end{tikzpicture}\qquad
    \begin{tikzpicture}[baseline=(o)]
        \node[] at (0,-0.1) (o) {};
        \node[circle,draw,fill=black,scale=0.5] at (-0.3,0) (p1) {};
        \node[circle,draw,fill=black,scale=0.5] at (0.3,0) (p2) {};
        \node[circle,draw,fill=black,scale=0.5] at (0,1) (q) {};
        
        \draw[<-] (p1) to (q);
        \draw[->] (p2) to (q);
        \draw[->] (p1) to (p2);
    \end{tikzpicture}\qquad
    \begin{tikzpicture}[baseline=(o)]
        \node[] at (0,-0.1) (o) {};
        \node[circle,draw,fill=black,scale=0.5] at (-0.3,0) (p1) {};
        \node[circle,draw,fill=black,scale=0.5] at (0.3,0) (p2) {};
        \node[circle,draw,fill=black,scale=0.5] at (0,1) (q) {};
        
        \draw[<-] (p1) to (q);
        \draw[->] (p2) to (q);
        \draw[<-] (p1) to (p2);
    \end{tikzpicture}\qquad
    \begin{tikzpicture}[baseline=(o)]
        \node[] at (0,-0.1) (o) {};
        \node[circle,draw,fill=black,scale=0.5] at (0,0) (p1) {};
        \node[circle,draw,fill=black,scale=0.5] at (0.6,0) (p2) {};
        \node[circle,draw,fill=black,scale=0.5] at (0,1) (q) {};
        
        \draw[->,bend left=25] (p1) to (q);
        \draw[<-,bend right=25] (p1) to (q);
        \draw[->] (p1) to (p2);
    \end{tikzpicture}\qquad
    \begin{tikzpicture}[baseline=(o)]
        \node[] at (0,-0.1) (o) {};
        \node[circle,draw,fill=black,scale=0.5] at (0,0) (p1) {};
        \node[circle,draw,fill=black,scale=0.5] at (0.6,0) (p2) {};
        \node[circle,draw,fill=black,scale=0.5] at (0,1) (q) {};
        
        \draw[->,bend left=25] (p1) to (q);
        \draw[<-,bend right=25] (p1) to (q);
        \draw[<-] (p1) to (p2);
    \end{tikzpicture}&
 \end{eqnarray}

\section{Bicovariant FOCC over Hopf algebras}
In this section we will work with Hopf algebra $H=(H, \Delta, \mu=\cdot,\varepsilon, 1, S)$.   
Basic facts and notation concerning Hopf bimodules (a.k.a. bicovariant bimodules \cite{Woronowicz2}) are recalled in Appendix B. Having these in mind, we can define a bicovariant coderivation.  
\begin{defi}\label{def_bCov}\footnote{We can also define a left covariant bicomodule as an object $(M,\triangleright,\Delta_L,\Delta_R)\in{}^H_H\mathfrak{M}^H$.}
    Let $\Upsilon\in {}^H_H\mathfrak{M}^H_H$ be a bicovariant bimodule over a Hopf algebra $H$. Then a coderivation $(\Upsilon,\delta)$ over $H$ is called bicovariant if 
    \begin{eqnarray}\label{bicovariant-coderivation}
        \delta(a\triangleright m\triangleleft b)&=&a\,\delta(m)\,b.
    \end{eqnarray}
 If $(\Upsilon,\delta)$ is a FOCC, then we will call it  bicovariant FOCC.
\end{defi}
 
We are now in position to formulate the following
\begin{Th}\label{bicovUniversal}
    Let $H$ be a Hopf algebra. The universal FOCC $(\Upsilon^U_H,\delta^U)$ is bicovariant provided we define a bimodule structure as
     \begin{eqnarray}\label{triangle-actions}
    	x\triangleright[a\otimes b]\doteq [x_{(1)}a\otimes x_{(2)}b]\,,\quad
    	{}[a\otimes b]\triangleleft x\doteq [ax_{(1)}\otimes bx_{(2)}]\,.
    \end{eqnarray}
    Moreover, one has Hopf bimodule isomorphisms 
 \begin{eqnarray}
        \Upsilon_H^U\cong \overline{H}_L\otimes H\cong H\otimes\overline{H}_R,
\end{eqnarray}
    where  $\overline{H}_L=\{\overline{H}, \blacktriangleright, \Xi_L \}\in {}^H_H\mathfrak{YD}$
    \begin{eqnarray} \label{LYD}
    	\Xi_L(\overline{a})\doteq a_{(1)}\otimes\overline{a_{(2)}}\,,\quad
    	x\blacktriangleright\overline{a}\doteq \mathrm{ad}^L_x a=\overline{x_{(1)}aS(x_{(2)})}\,,
    \end{eqnarray}
    and  $\overline{H}_R=\{\overline{H}, \blacktriangleleft, \Xi_R \}\in \mathfrak{YD}^H_H$
     \begin{eqnarray}\label{RYD}
    	\Xi_R(\overline{a})\doteq\overline{a_{(1)}}\otimes a_{(2)}\,,\quad
    	\overline{a}\blacktriangleleft x\doteq\mathrm{ad}^R_x a=\overline{S(x_{(1)})ax_{(2)}}\,.
    \end{eqnarray}
 Here, the vector space $\overline{H}=H/(1\cdot\mathbb{K})$ inherits two factor Yetter-Drinfeld (Y-D) structures, $\overline{H}_L$ and $\overline{H}_R$, from the corresponding Y-D structures on the Hopf algebra $H$ itself, see Appendix B. We  denote by $\overline{a}=\overline{a+\lambda 1}\in \overline{H}$, $\lambda\in \mathbb{K}$, the image of $a\in H$ by the canonical projection $H\rightarrow \overline{H}$.
    \begin{proof}
        Firstly, it is easy to check by direct calculations that the actions \eqref{triangle-actions} and coactions $(\Delta^U_L, \Delta^U_R)$, cf. \eqref{DeltaU}, do satisfy the conditions \eqref{b1}-\eqref{b2} in Appendix B. This means that $\Upsilon^U_H$ is a bicovariant bimodule. Also the condition \eqref{bicovariant-coderivation} can be easily verified. 
     Now $P_R([a\otimes b])=[a\otimes b_{(1)}]\triangleleft S(b_{(2)})=[aS(b_{(2)})_{(1)}\otimes b_{(1)}S(b_{(2)})_{(2)}]=[aS(b_{(3)})\otimes b_{(1)}S(b_{(2)})]=[aS(b)\otimes 1]$, i.e. 
      $P_R(\Upsilon^U_H)=\mathrm{Span}\{aS(b)\}$.\footnote{One sees that $\mathrm{Span}\{ aS(b)| a,b\in H\}\subseteq H$. From the other hand for $H\ni c=c_{(1)}c_{(2)}S(c_{(3)})\in \mathrm{Span}\{ aS(b)| a,b\in H\}$, i.e., equality holds.} Moreover, we have linear isomorphism  between subspaces $\overline{H}\otimes 1\ni \overline{a}\otimes 1\mapsto \Phi_R(\overline{a}\otimes 1)= [a\otimes 1]=[(a+\lambda 1)\otimes 1]\in P_R(\Upsilon^U_H)$. It is not difficult to check that this isomorphism can be used to induce left-left Y-D structure \eqref{LYD} on $\overline{H}_L\equiv \overline{H}\otimes 1$.
       By similar arguments we can obtain  an isomorpism 
       \begin{equation}\label{PsiR}
       	\Phi_L: \overline{H}_R\equiv 1\otimes\overline{H}\rightarrow P_L(\Upsilon^U_H)\subset \Upsilon^U_H\quad \mbox{by}\quad \Phi_L(1\otimes\overline{a})=[1\otimes a]
       \end{equation}
        and the right-right Y-D structure $\overline{H}_R\in \mathfrak{YD}^H_H$ \eqref{RYD} induced from $P_R(\Upsilon^U_H)$:
        \begin{eqnarray}\label{RYDb}
        	\Xi_R([1\otimes a])\doteq [1\otimes a_{(1)}]\otimes a_{(2)}\,,\quad
        	[1\otimes a]\blacktriangleleft x\doteq [1\otimes S(x_{(1)})ax_{(2)}]\,.
        \end{eqnarray}
         This ends the proof.  
    \end{proof}
\end{Th}

\begin{rrm}\label{ker-bar}
	There is a canonical vector space isomorphism $\overline H\ni\bar a\leftrightarrow \delta^U([a\otimes 1])\doteq \delta(\bar a)=a-\varepsilon(a)1\in\ker\varepsilon_H$ between $\bar H$ and $\ker\varepsilon_H$. Therefore, all structures arranged on $\overline H$ can be translated to those on $\ker\varepsilon_H$ and vice versa. For example,  $(\overline{H}, \Xi_L,\Xi_R)\in {}^{H}\mathfrak{M}^{H}$ is a factor bicomodule over $H$. The map $\Phi_L$ (resp. $\Phi_R$) gives only right (resp. left) comodule embedding into    $\Upsilon^U_H$. Therefore, $\bar a\mapsto \delta(\bar a)$ is not a coderivation of $(\overline{H}, \Xi_L,\Xi_R)$.

\end{rrm}


\begin{rrm}\label{PhiR} One finds that  Hopf bimodule isomorpfism $\Phi_R: \overline{H}_L\otimes H\to\Upsilon_H^U$  and its inverse (see  Appendix B) can be written as 
    \begin{eqnarray}
        \Phi_R(\overline{a}\otimes b)&=& [a\otimes 1]\triangleleft b=[r(a\otimes b)]\,,\quad  \Phi_R(\overline{a}\otimes bc)= \Phi_R(\overline{a}\otimes b)\triangleleft c\,,\nonumber\\
          	\Phi^{-1}_R([a\otimes b])&=&\overline{aS(b_{(1)})}\otimes b_{(2)} \,,\quad\mbox{and}\quad  \mathrm{Im} (\mathrm{id}\otimes \varepsilon)\circ\Phi^{-1}_R=\overline{H}_L\,,\nonumber
    \end{eqnarray}
    where $r(a\otimes b)\doteq ab_{(1)}\otimes b_{(2)}$ is invertible mapping introduced by Woronowicz \cite{Woronowicz}. The corresponding Hopf bimodule structure on $\overline{H}_L\otimes H$ making the map $\Phi_R $ a Hopf bimodule morphism is as follows
     \begin{eqnarray}
    	c\triangleright(\overline{a}\otimes b)\triangleleft d&=&  	c_{(1)}\blacktriangleright\overline{a}\otimes 	c_{(2)}bd\,,\nonumber\\
     \Delta_R (\overline{a}\otimes b)=(\overline{a}\otimes b_{(1)})\otimes b_{(2)} \,,&\mbox{and}&  \Delta_L (\overline{a}\otimes b)=  a_{(1)}b_{(1)}\otimes (\overline{a_{(2)}}\otimes b_{(2)}) \,.\nonumber
    \end{eqnarray}
    We also find $$\delta(\overline{a}\otimes b)\doteq \delta^U(\Phi_R(\overline{a}\otimes b))= (a-\varepsilon(a)1)b\,.$$
    Similary, for  $\Phi_L: H\otimes\overline{H}_R\to\Upsilon^U$  one gets
     \begin{eqnarray}
     	\Phi_L(a\otimes \overline{b})&=&a\triangleright [1\otimes b]= [s'(a\otimes b)]\,,\quad 	\Phi_L(ab\otimes \overline{c})=	a\triangleright\Phi_L(b\otimes \overline{c})\,,\nonumber\\
    	\Phi^{-1}_L([a\otimes  b])&=& a_{(1)}\otimes \overline{S(a_{(2)})b}\,,\quad\mbox{and}\quad \mathrm{Im} ( \varepsilon\otimes\mathrm{id})\circ\Phi^{-1}_L=\overline{H}_R\,.\nonumber
    \end{eqnarray}
   where $s'(a\otimes b)\doteq a_{(1)}\otimes a_{(2)}b$  is another Woronowicz map. 
\end{rrm}
    
    We recall that two other  Woronowicz mappings: $r'(a\otimes b)\doteq a_{(1)}b\otimes a_{(2)}$ and $s(a\otimes b)\doteq b_{(1)}\otimes a b_{(2)}$ in order to be invertible require a bijective (i.e. invertible) antipod. The last condition is always satisfied for  finite-dimensional Hopf algebras. For commutative or cocomutative Hopf algebras, regardless of a dimension,  $S^2=\mathrm{id}$, i.e. $S^{-1}=S$. The map $S^{-1}$ being an antipod, provides a Hopf algebra structure to $H_{op}$, i.e, a bialgebra with the opposite multiplication as well as for $H^{coop}$,  a bialgebra with the opposite comultiplication.\footnote{$S$ is also an antipod for $H^{coop}_{op}$.} Coming back to our case one finds that the map
    $\Phi(m)\doteq S(m_{(-1)})\triangleright m_{<0>}\triangleleft  S(m_{(1)})$ has the inverse 
    $\Phi^{-1}(m)\doteq S^{-1}(m_{(1)})\triangleright m_{<0>}\triangleleft  S^{-1}(m_{(-1)})$ \cite{Klimyk}. It give rise a vector space isomorphism between $P_R(\Upsilon^U_H)$ and $P_L(\Upsilon^U_H)$:
    \begin{equation}
    	\Phi([a\otimes 1])=[1\otimes S(a)]\,,\quad\mbox{and}\quad \Phi([1\otimes a])= [S(a)\otimes 1]\,.
    \end{equation}

\begin{pr}[Classification theorem]\label{ClasBicov}
    Let $H$ be a Hopf algebra. There is one-to-one correspondence between bicovariant FOCC  $\Upsilon$ over $H$ and left-left Y-D submodules $\mathcal{L}\subseteq\overline{H}_L$ (or right-right Y-D submodules $\mathcal{R}\subset\overline{H}_R$). 
    Equivalently, the subspace $\mathcal{L}\subset \overline{H}$ has to satisfy two conditions
    \begin{equation}\label{LLYDcondition}
    	\Xi_L(\mathcal{L})\subseteq H\otimes \mathcal{L}\,,\quad\mbox{and}\quad  H\blacktriangleright \mathcal{L}\subseteq \mathcal{L}\,.
    \end{equation}   
    where $(\Xi_L, \blacktriangleright)$ are defined by \eqref{LYD}.
    Further, we should identified those submodules which are isomorphic by a Hopf algebra automorphism.
\end{pr}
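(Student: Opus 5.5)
Every FOCC is isomorphic to the restriction of $\delta^U$ to a subbicomodule of $\Upsilon^U_H$ (Proposition \ref{Doi} together with Definition \ref{def_cdiff}). So I will first show that a bicovariant FOCC $(\Upsilon,\delta)$ corresponds exactly to a subbicomodule of $\Upsilon^U_H$ that is also closed under the actions \eqref{triangle-actions}, i.e. a Hopf subbimodule. Given a bicovariant $\delta$, I will check directly from \eqref{hatdelta} that $\hat\delta$ intertwines the actions:
\[
\hat\delta(x\triangleright m\triangleleft y)=[\delta(x_{(1)}\triangleright m_{<0>}\triangleleft y_{(1)})\otimes x_{(2)}m_{(1)}y_{(2)}]=[x_{(1)}\delta(m_{<0>})y_{(1)}\otimes x_{(2)}m_{(1)}y_{(2)}],
\]
and the right-hand side equals $x\triangleright\hat\delta(m)\triangleleft y$. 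Here I use \eqref{bicovariant-coderivation} and the compatibility \eqref{b1}--\eqref{b2} of the coactions with the actions. Since $\hat\delta$ is injective, $\hat\delta(\Upsilon)$ is then a Hopf subbimodule of $\Upsilon^U_H$. Conversely, any Hopf subbimodule of $\Upsilon^U_H$ carries a bicovariant FOCC, namely the restriction of $\delta^U$, which is bicovariant by Theorem \ref{bicovUniversal}.

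Next I transport the problem to $\overline{H}_L\otimes H$ via the Hopf bimodule isomorphism $\Phi_R$ of Theorem \ref{bicovUniversal} and Remark \ref{PhiR}. There I invoke the fundamental theorem on Hopf bimodules recalled in Appendix B (Woronowicz; Schauenburg): the functor $M\mapsto P_R(M)$, i.e. taking right coinvariants, is an equivalence between ${}^H_H\mathfrak{M}^H_H$ and ${}^H_H\mathfrak{YD}$, with inverse $V\mapsto V\otimes H$. I will make the correspondence on subobjects explicit. For a Hopf subbimodule $N\subseteq\Upsilon^U_H$, set $\mathcal{L}=\Phi_R^{-1}(P_R(N))$, where $P_R(m)=m_{<0>}\triangleleft S(m_{(1)})$ is the projection onto right coinvariants. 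Since $P_R$ maps $N$ into itself, $\mathcal{L}$ is stable under $\Xi_L$ and under $\blacktriangleright$, which are the two conditions \eqref{LLYDcondition}. Conversely, for a Y-D submodule $\mathcal{L}$, the image $\Phi_R(\mathcal{L}\otimes H)$ is a Hopf subbimodule, by the explicit structure formulas in Remark \ref{PhiR}.

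The main obstacle is proving that these two assignments are mutually inverse, i.e. that $N=\Phi_R(\mathcal{L}\otimes H)$. The inclusion $\supseteq$ is clear, since $N$ is closed under the right action. For $\subseteq$ I will use the identity $m=P_R(m_{<0>})\triangleleft m_{(1)}$. It gives $N\subseteq P_R(N)\triangleleft H$, and I will verify it in Sweedler notation using $S(x_{(1)})x_{(2)}=\varepsilon(x)$. Injectivity of $\mathcal{L}\mapsto\Phi_R(\mathcal{L}\otimes H)$ follows by applying $(\mathrm{id}\otimes\varepsilon)\circ\Phi_R^{-1}$, which recovers $\mathcal{L}$.

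The right-right version, with $\mathcal{R}\subseteq\overline{H}_R$, follows symmetrically using $\Phi_L$ and $P_L$. Finally, any Hopf algebra automorphism $\phi$ induces $[\phi\otimes\phi]$ (Remark \ref{rrm_morphism}), which preserves the actions and maps $\mathcal{L}$ to $\overline{\phi}(\mathcal{L})$. This justifies identifying Y-D submodules related by such automorphisms.
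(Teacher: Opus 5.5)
Your proposal is correct and follows the paper's route: you transport along the Hopf bimodule isomorphism $\Phi_R$, send $\mathcal{L}\mapsto\Phi_R(\mathcal{L}\otimes H)$, and go back via right coinvariants. Your $\Phi_R^{-1}\circ P_R$ coincides with the paper's $(\mathrm{id}\otimes\varepsilon)\circ\Phi_R^{-1}$, since $\Phi_R^{-1}(m)=P_R(m_{<0>})\otimes m_{(1)}$. You also spell out two steps the paper leaves implicit: first, that $\hat\delta$ intertwines the actions, so bicovariant FOCCs are exactly Hopf subbimodules of $\Upsilon^U_H$; second, that the two assignments are mutually inverse, via $m=P_R(m_{<0>})\triangleleft m_{(1)}$.
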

    \begin{proof}
    	$\mathcal{L}$ has to be at the same time subcomodule and submodule of  $\overline{H}_L$, therefore, the left-left Y-D subcomodule condition \eqref{LLYDcondition} is obvious. Since $\Phi_R$ is Hopf bimodule morphism, it is also clear that if $\mathcal{L}$ is Y-D subcomodule of $\overline{H}_L$ then $\Phi_R(\mathcal{L}\otimes H)\in {}^H_H\mathfrak{M}^H_H$ is a bicovariant subbimodule of $\Upsilon^U_H$. 
        Conversely, if  $\Upsilon\subset\Upsilon^U_H$ is a Hopf subbimodule then 
        $\mathcal{L}\doteq  (\mathrm{id}\otimes \varepsilon)\circ\Phi^{-1}_R(\Upsilon)$ is required left-left Y-D submodule in $\overline{H}_L$.
\end{proof}
\begin{rrm}\label{right_covariant}
There is a one-to-one correspondence   between subspaces $\mathcal{L}\subset \overline{H}$  being  left subcomodules, i.e.  satisfying $ \Xi_L(\mathcal{L})\subseteq H\otimes \mathcal{L}$ and their images $\Phi_R(\mathcal{L})\in {}^H\mathfrak{M}^H_H$  being a right covariant bicomodules providing right covariant FOCC: $\delta^U([a\otimes b]\triangleleft c)=\delta^U([a\otimes b])c$. We should notice that subspace $\mathcal{L}$ is a left subcomodule if and only if it is right submodule over the algebra $H^*$ with the action $\bar a\star\alpha= \alpha(a_{(1)})\overline{a_{(2)}}$. For example, any group-like element $g\in H$ (except the unit) generates a one-dimensional right-covariant FOCC on  $<\overline{g}>\otimes H$ with the codifferential $\delta^U([ga_{(1)}\otimes a_{(2)}])=(g-1)a$. Similarly, having a primitive element $x\in H$, one finds a one-dimensional 
right covariant FOCC on $<\overline{x}>\otimes H$ with the codifferential $\delta^U([xa_{(1)}\otimes a_{(2)}])= xa$. In the last case, it is a cocommutative right covariant cocalculus (see below).
\end{rrm}
Finally, we are in a position to analyze the case of cocommutative Hopf algebras.
 \begin{pr}[Cocommutative case, see Subsection 2.2]\label{ClasBicovCocom}
Let $H$ be a cocommutative Hopf algebra. Then
\begin{itemize}
    \item[a)]$H=H^\natural$ and $\overline{H}=\overline{H}^\natural$ as bicomodules.\footnote{It does not imply that its image $\Phi_R(\overline{H}\otimes 1)$ (or  $\Phi_L(1\otimes\overline{H})$ ) shares this property.};
    \item[b)]the subspace $\mathcal{L}\subset \overline{H}$ is a left-left Y-D submodule of $\overline{H}_L$ if and only if it is a right-right Y-D submodule of $\overline{H}_R$;
    \item[c)]left-left Y-D submodule $\mathcal{L}\subset \overline{H}_L$ generates cocomutative bicovariant FOCC if and only if $\Phi_R(\mathcal{L}\otimes H)\subset \Upsilon_H^{U\natural}\subset \Upsilon_H^{U\wedge}$ (c.f. Proposition \ref{natural}).
    \item[d)]every element of left-left Y-D submodule $\overline{x}\in\mathcal{L}\subset\overline{H}$ that generates cocommutative FOCC is primitive as an element of the coalgebra $x\in H$.  
\end{itemize}
    \begin{proof}
    	a) The first property is obvious. The second follows from the fact that $H$ is factorized by the cocommutative subbicomodule $\{ \mathbb{K}1\}$.
    	
    	b) We have $\Xi_L(\mathcal{L})= H\otimes \mathcal{L}$ and $\Xi_R(\mathcal{L})=\tau\circ\Xi_L(\mathcal{L})$, so  $\mathcal{L}$ is a right subbicomodule in $\overline{H}=\overline{H}^\natural$. Next, since $S^{-1}=S$ and $(S\otimes S)\circ\Delta=\Delta\circ S$ one checks
    	$$h\blacktriangleright\overline{l}=\overline{h_{(1)}lS(h_{(2)})}= \overline{S^2(h_{(2)})lS(h_{(1)})}
    	= \overline{S(S(h)_{(1)})lS(h)_{(2)})}= \overline{l}\blacktriangleleft S(h)\\,,$$
    	i.e. $H\blacktriangleright \mathcal{L}=\mathcal{L}\blacktriangleleft S(H) \subset \mathcal{L}$. Since $S$ is bijective the proof is done.
    	
    	c) easily follows from Proposition \ref{ClasBicov}, see also Remark \ref{PsiR}.
    
        d) if $\overline{x}\in\mathcal{L}$ generates bicomodule $\Upsilon$, then $[x\otimes 1]\in\Upsilon$ (we can choose $x\in\mathrm{Ker}\varepsilon$).Thus $\delta^U[x\otimes 1]=x$. Making use of co-Leibniz  rules one gets
        \begin{eqnarray}
            \Delta(x)&=&(\mathrm{id}\otimes\delta^U)\circ\Delta_R^\tau ([x\otimes 1])+(\delta^U\otimes\mathrm{id})\circ\Delta_R([x\otimes 1])=1\otimes x+x\otimes 1\,.
        \end{eqnarray}
    \end{proof}
\end{pr}
\subsection{Universal quantum Lie algebras over $H$}

As we indicate in Appendix C there are two types of left-left (as well right-right) Y-D structures on an arbitrary Hopf algebra. The first one i) with an adjoint action, is related to FOCC and turns out to be also related with Woronowicz idea of quantum tangent space. Therefore, we shell call it a quantum Lie algebra type (qLA-type). In fact, we are going to show that any Y-D submodule in $\overline{H}_L$ can be treated as a left bicovariant quantum Lie algebra in a sense of abstract definition proposed in \cite[Definition 2.7.1 p. 170]{BeggsMajid}. For this it is enough to show the following (we skip right handed version):
\begin{Th}\label{can_qLA}
	For any Hopf algebra $H$ the Y-D module $\overline{H}_L$ \eqref{LYD}, equipped with the canonical Y-D braiding $\tau_q: \overline{H}_L\otimes\overline{H}_L\rightarrow \overline{H}_L\otimes\overline{H}_L$ \footnote{For $M,N\in  {}^H_H\mathfrak{YD}$ we have the canonical braiding $\tau_{M,N}: M\otimes N\rightarrow N\otimes M$ given by $\tau_{M,N}(m\otimes n)\doteq m_{(-1)}\blacktriangleright n\otimes m_{<0>}$ satisfying the braid relation \eqref{braid}.}
	\begin{equation}\label{qBraid}
		 \tau_q(\overline{X}\otimes\overline{Y})\doteq  \mathrm{ad}^L_{X_{(1)}}\overline{Y}\otimes\overline{X_{(2)}}=
		 \overline{X_{(1)}Y S(X_{(2)})}\otimes\overline{X_{(3)} }
	\end{equation}
	and the (quantum) bracket $[\,,]_q:\overline{H}_L\otimes\overline{H}_L\rightarrow \overline{H}_L $
	\begin{equation}\label{qLie}
		 [\overline{X}, \overline{Y}]_q\doteq \mathrm{ad}^L_{\delta(\overline{X})}\overline{Y} 
		 =\overline{X_{(1)}YS(X_{(2)})}-\varepsilon(X)\overline{Y}
	\end{equation}
where $\delta(\overline{X})\doteq \delta^U([X\otimes 1])=X-\varepsilon(X)1\in \ker\varepsilon$\footnote{The Y-D structures on $\overline{H}_L$ and $[H\otimes 1]$ are isomorphic, see also Remark \ref{ker-bar}.}. The following properties are fulfilled:\\
\begin{equation}\label{qLie2}
	 [\overline{X},\overline{Y}]_q= \overline{\delta(\overline{ X})\delta(\overline{ Y}) - \delta( X_{(1)}\overline{Y}S(X_{(2)}))\delta(X_{(3)})}\,,
\end{equation}
which can be rewritten as 
\begin{equation}\label{qLie3}
 [\overline{X},\overline{Y}]_q=\overline{\mu\circ(\delta\otimes \delta)\circ(\mathrm{id} -\tau_q)(\overline{X}\otimes\overline{Y})}\,,
\end{equation}
implying braided anticommutativity: $[\,,]_q$ vanishes on $\ker(\mathrm{id}-\tau_q)$;\\
Further, three braided Jacobi identities on $\overline{H}_L\otimes\overline{H}_L\otimes \overline{H}_L$
are also satisfied
\begin{eqnarray}
[\,,]_q\circ([\,,]_q\otimes \mathrm{id})&=&	[\,,]_q\circ(\mathrm{id}\otimes [\,,]_q)\circ((\mathrm{id}-\tau_q)\otimes\mathrm{id})\,,\label{qJ1}\\
\tau_q\circ (\mathrm{id}\otimes [\,,]_q)&=&([\,,]_q\otimes\mathrm{id})\circ(\mathrm{id}\otimes\tau_q)\circ(\tau_q\otimes\mathrm{id})\,,\label{qJ2}\\
\tau_q\circ([\,,]_q\otimes\mathrm{id})&-&(\mathrm{id}\otimes[\,,]_q)\circ (\tau_q\otimes\mathrm{id})\circ(\mathrm{id}\otimes\tau_q)\nonumber\\
&=& ([\,,]_q\otimes\mathrm{id})\circ(\mathrm{id}\otimes\tau_q)\circ((\mathrm{id}-\tau_q^2)\otimes\mathrm{id})\,.\label{qJ3}
\end{eqnarray}
\end{Th}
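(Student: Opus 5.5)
The plan is to reduce everything to the fact that $\mathrm{ad}^L$ makes $H$ a left $H$-module algebra, with $\mathrm{ad}^L_{hk}=\mathrm{ad}^L_h\circ\mathrm{ad}^L_k$ and $\mathrm{ad}^L_1=\mathrm{id}$. The Y-D structure of $\overline{H}_L$ (Theorem \ref{bicovUniversal}, \eqref{LYD}) then enters only through the coaction $\Xi_L$. Here is the order I would follow.

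\textbf{Well-definedness.} First I check that $\tau_q$ and $[\,,]_q$ are well defined on $\overline{H}=H/\mathbb{K}1$. In the second slot, $\mathrm{ad}^L_X1=\varepsilon(X)1$, which is zero in $\overline{H}$. In the first slot, replacing $X$ by $X+\lambda 1$ changes $\tau_q$ by $\lambda\,\overline{Y}\otimes\overline{1}=0$. It leaves $[\,,]_q$ unchanged, because $\mathrm{ad}^L_1\overline{Y}-\varepsilon(1)\overline{Y}=0$. That $\tau_q$ is the canonical Y-D braiding, and hence satisfies \eqref{braid}, is standard once $\overline{H}_L\in{}^H_H\mathfrak{YD}$.

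\textbf{Formula \eqref{qLie2}.} Next I prove \eqref{qLie2} by a direct Sweedler computation, and I will do it before passing to the quotient. Write $\delta(h)=h-\varepsilon(h)1$. Using $\varepsilon(X_{(1)}YS(X_{(2)}))=\varepsilon(X)\varepsilon(Y)$ and $X_{(1)}YS(X_{(2)})X_{(3)}=XY$, one gets
\begin{equation*}
\delta(X)\delta(Y)-\delta(X_{(1)}YS(X_{(2)}))\delta(X_{(3)})=X_{(1)}YS(X_{(2)})-\varepsilon(X)Y .
\end{equation*}
This identity holds already in $H$, not just modulo $1$, and its right-hand side lies in $\ker\varepsilon$. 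Projecting to $\overline{H}$ gives \eqref{qLie2}. Formula \eqref{qLie3} is the same statement read through the definition of $\tau_q$, since $\mu\circ(\delta\otimes\delta)\circ\tau_q(\overline X\otimes\overline Y)=\delta(X_{(1)}YS(X_{(2)}))\delta(X_{(3)})$. Braided anticommutativity is then immediate from \eqref{qLie3}.

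\textbf{First Jacobi identity \eqref{qJ1}.} I rewrite the bracket as $[\overline X,\overline Z]_q=\mathrm{ad}^L_{\delta(X)}\overline Z$, where the scalar part of $\delta(X)$ vanishes. The right-hand side of \eqref{qJ1} is then $\mathrm{ad}^L_{\delta(X)\delta(Y)}\overline Z-\mathrm{ad}^L_{\delta(X_{(1)}YS(X_{(2)}))\delta(X_{(3)})}\overline Z$, using multiplicativity of $\mathrm{ad}^L$. By the exact (unprojected) identity above, the element acting on $\overline Z$ is $\mathrm{ad}^L_XY-\varepsilon(X)Y\in\ker\varepsilon$. This element equals $\delta$ of a representative of $[\overline X,\overline Y]_q$. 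So the right-hand side is $\mathrm{ad}^L_{\delta([\overline X,\overline Y]_q)}\overline Z=[[\overline X,\overline Y]_q,\overline Z]_q$, which is the left-hand side. Keeping the identity exact rather than modulo $1$ matters here, because a stray scalar $c1$ would act as $c\,\mathrm{id}$.

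\textbf{Identities \eqref{qJ2} and \eqref{qJ3}.} For these I compute the Y-D structure of the bracket. Equivariance $h\blacktriangleright[\overline Y,\overline Z]_q=[h_{(1)}\blacktriangleright\overline Y,h_{(2)}\blacktriangleright\overline Z]_q$ follows from the module-algebra property. Identity \eqref{qJ2} then becomes a naturality statement: $\tau_q\circ(\mathrm{id}\otimes[\,,]_q)$ acts on the second factor by $X_{(1)}$, and this can be pushed through the bracket via equivariance. I would verify it by expanding both sides in Sweedler notation; each side reduces to $\mathrm{ad}^L_{X_{(1)}}[\overline Y,\overline Z]_q\otimes\overline{X_{(2)}}$.

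I expect \eqref{qJ3} to be the main obstacle. The bracket is not a comodule map: its coaction is $\Xi_L([\overline X,\overline Y]_q)=X_{(1)}Y_{(1)}S(X_{(3)})\otimes[\overline{X_{(2)}},\overline{Y_{(2)}}]_q$ plus a correction term from $-\varepsilon(X)\overline Y$ and from the projection. The defect from colinearity is exactly what produces the term with $(\mathrm{id}-\tau_q^2)$. My first attempt would be to compute the left-hand side of \eqref{qJ3} on $\overline X\otimes\overline Y\otimes\overline Z$ directly, expressing every term as $\mathrm{ad}^L$ of products of Sweedler components. I would then expand $\tau_q^2(\overline X\otimes\overline Y)=\overline{X_{(1)}Y_{(1)}S(X_{(3)})}\otimes\mathrm{ad}^L_{\ldots}\overline{X_{(2)}}$ on the right-hand side. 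Finally I would match the terms pairwise using $S(h_{(1)})h_{(2)}=\varepsilon(h)$ and the exact identity from \eqref{qLie2}.
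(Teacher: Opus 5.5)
Your arguments for well-definedness, \eqref{qLie2}--\eqref{qLie3}, braided anticommutativity, \eqref{qJ1} and \eqref{qJ2} are correct, and they are essentially the paper's Appendix E computation. Both you and the paper reduce \eqref{qJ1} to $\mathrm{ad}^L_{\mathrm{ad}^L_XY-\varepsilon(X)Y}\overline Z$, and both sides of \eqref{qJ2} to $\mathrm{ad}^L_{X_{(1)}\delta(Y)}\overline Z\otimes\overline{X_{(2)}}$. The equivariance you invoke is true, but it comes from $\delta(\mathrm{ad}^L_{h_{(1)}}Y)\,h_{(2)}=h\,\delta(Y)$ together with $\mathrm{ad}^L_{ab}=\mathrm{ad}^L_a\mathrm{ad}^L_b$, not from the module-algebra property.

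The gap is \eqref{qJ3}: you describe a strategy but prove nothing, and the explicit formula you propose for $\tau_q^2$ is wrong. In fact $\tau_q^2(\overline X\otimes\overline Y)=\overline{\mathrm{ad}^L_{X_{(1)}Y_{(1)}}S^2(X_{(4)})}\otimes\overline{X_{(2)}Y_{(2)}S(X_{(3)})}$. The factor $\overline{X_{(2)}Y_{(2)}S(X_{(3)})}$ sits in the second leg, and the first leg is $\mathrm{ad}^L_{X_{(1)}Y_{(1)}}$ applied to $S^2(X_{(4)})$, not to $\overline{X_{(2)}}$. You can check this on group-likes: $\tau_q^2(\overline g\otimes\overline h)=\overline{ghgh^{-1}g^{-1}}\otimes\overline{ghg^{-1}}$. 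The $S^2$ that appears cancels in the relevant term only through $S^2(h_{(2)})S(h_{(1)})=\varepsilon(h)$, which is not among the tools you list. So the plan as written would not close. The claim that the colinearity defect of the bracket ``exactly produces'' the $(\mathrm{id}-\tau_q^2)$ term is a heuristic, not an argument.

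The simplest repair avoids $\tau_q^2$ altogether. Precomposing \eqref{qJ2} with $\tau_q\otimes\mathrm{id}$ gives
\[
([\,,]_q\otimes\mathrm{id})(\mathrm{id}\otimes\tau_q)(\tau_q^2\otimes\mathrm{id})=\tau_q(\mathrm{id}\otimes[\,,]_q)(\tau_q\otimes\mathrm{id}).
\]
Now put
\begin{align*}
T_1&=\mathrm{ad}^L_{X_{(1)}Y_{(1)}S(X_{(4)})}\overline Z\otimes\overline{X_{(2)}Y_{(2)}S(X_{(3)})}\,, & T_2&=\varepsilon(X)\,\mathrm{ad}^L_{Y_{(1)}}\overline Z\otimes\overline{Y_{(2)}}\,,\\
T_3&=\mathrm{ad}^L_{X_{(1)}Y_{(1)}}\overline Z\otimes\overline{X_{(2)}Y_{(2)}S(X_{(3)})}\,, & T_4&=\mathrm{ad}^L_{XY_{(1)}}\overline Z\otimes\overline{Y_{(2)}}\,.
\end{align*}
On $\overline X\otimes\overline Y\otimes\overline Z$ the four composites evaluate as follows:
\begin{itemize}
\item $\tau_q([\,,]_q\otimes\mathrm{id})=T_1-T_2$, by applying $\Delta$ to $\mathrm{ad}^L_XY-\varepsilon(X)Y$;
\item $(\mathrm{id}\otimes[\,,]_q)(\tau_q\otimes\mathrm{id})(\mathrm{id}\otimes\tau_q)=T_3-T_4$;
\item $([\,,]_q\otimes\mathrm{id})(\mathrm{id}\otimes\tau_q)=T_4-T_2$;
\item $\tau_q(\mathrm{id}\otimes[\,,]_q)(\tau_q\otimes\mathrm{id})=T_3-T_1$. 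This composite equals $\mathrm{ad}^L_{X_{(1)}Y_{(1)}S(X_{(4)})\delta(X_{(5)})}\overline Z\otimes\overline{X_{(2)}Y_{(2)}S(X_{(3)})}$, and $S(X_{(4)})\delta(X_{(5)})=\varepsilon(X_{(4)})1-S(X_{(4)})$.
\end{itemize}
Hence both sides of \eqref{qJ3} equal $T_1-T_2-T_3+T_4$. The paper's Appendix E arrives at the same four terms, but it computes the $\tau_q^2$ term directly, carrying the $S^2$ factor through.
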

\begin{proof}
	 Firstly, we have to check that all formulas are well defined, i.e., they do not depend on a choice of representing elements $X\in \overline{X}, Y\in\overline{Y}$ and the multiplication is performed in the algebra $H$. To show \eqref{qLie2} one calculates
\begin{eqnarray}
	 \overline{\delta(\overline{X})\delta(\overline{Y})-\delta(\mathrm{ad}^L_{X_{(1)}}\overline{Y})\delta(\overline{X_{(2)}})}
	&=&\overline{\delta(\overline{X})Y-\mathrm{ad}^L_{X_{(1)}}Y\delta(\overline{X_{(2)}})}\nonumber\\
	\overline{\delta(\overline{X})Y-XY+\mathrm{ad}^L_XY}
	&=&\overline{\mathrm{ad}^L_XY-Y\varepsilon(X)}=\mathrm{ad}^L_{\delta(\overline{X})}\overline{Y}\,.
\end{eqnarray}
Braided anticommutativity follows directly from  \eqref{qLie3}. More technical Jacobi identity are postponed to Appendix E. 
\end{proof}
In particular, any Y-D submodule  $\mathcal{L}\subset \overline{H}_L$ becomes automatically left quantum Lie subalgebra over $H$. Therefore, it is justified to call  $\overline{H}_L$ a universal left quantum Lie algebra over~$H$.

For a right-right Y-D module $\overline{H}_R$ one gets \\
$[\overline{X},\overline{Y}]^R_q\doteq 
\mathrm{ad}^R_{\delta(Y)}\overline{X}= \overline{S(Y_{(1)})X Y_{(2)}}-\varepsilon(Y)\overline{X}$ \\
 $\tau^R_q(\overline{X}\otimes\overline{Y})\doteq \overline{Y_{(1)}}\otimes\mathrm{ad}^R_{Y_{(2)}}\overline{X}=
  \overline{Y_{(1)} }\otimes\overline{S(Y_{(2)})X Y_{(3)}}$
 
 The second Y-D structure indicated in Appendix C ii) introduced in the inspiring   Woronowicz' paper, which we call Woronowicz type (W-type),  found application in a  bicovariant differential calculus. Both types are related by the duality (c.f. Appendix D for finite dimensional  Hopf algebras). Here, we extend this result to an arbitrary  Hopf algebra. 

\subsection{Canonical pairing between universal FOCC and FODC}

We know \cite{Sweedler69,Montgomery_book,BrzezinskiWisbauer,Radford} that for every  Hopf algebra $(H,\mu,\Delta, S, 1,\varepsilon)$ there exists a dual Hopf algebra 
\begin{equation}\label{vf6}
(H^\circ,\mu^\circ=\Delta^*\mid_{H^\circ\otimes H^\circ}=\star,\Delta^\circ=\mu^*\mid_{H^\circ}, S^\circ=S^*\mid_{H^\circ}, 1^\circ=\varepsilon,\epsilon^\circ=1)\,,	 
\end{equation}
where $H^\circ$ is the biggest subalgebra of $(H^*, \Delta^*=\star, 1)$ which has well defined coproduct $\Delta^\circ=\mu^*: H^\circ\rightarrow H^\circ\otimes H^\circ$
 Multiplication $\mu^\circ$ is given by a restriction of  the convolution product (c.f. Appendix A,D) which is defined on $H^*$.  Consider a universal FODC $d^U:H^\circ\rightarrow\Omega^U_1(H^\circ)\equiv\ker\mu^\circ\subset \ker\Delta^*$, which is now a bicovariant FODC.  Therefore, $ (\ker\mu^\circ)^\dagger\cong  \mathrm{End}^0_\mathbb{K}(H^\circ)\subseteq \mathrm{Hom}^0_\mathbb{K}(H^\circ, H^*)$. Equality holds only in the finite-dimensional case, i.e.  when  $H^\circ=H^*$.  
 There is  a canonical duality between Hopf algebras $H, H^\circ$ given by the evaluation map
 \begin{equation}\label{vf7}
 	 <X|\alpha> \doteq\alpha(X)\,, \quad 
 \end{equation}
 such that
  \begin{equation}\label{vf8}
 	<S(X)|\alpha>=<X|S^\circ(\alpha)>\,, <XY|\alpha>=\alpha_{(1)}(X)\alpha_{(2)}(Y), <X|\alpha\star\beta>=\alpha(X_{(1)})\beta(X_{(2)})\,,
 \end{equation}
 where $\Delta^\circ(\alpha)=\alpha_{(1)}\otimes \alpha_{(2)}$. Of course, we also have $\epsilon^\circ(\alpha)=<1|\alpha>$ and $\varepsilon(X)=<X|\varepsilon>$.
  \begin{pr}
  		The pairing  \eqref{vf7} extends naturally to the pairing between $H\otimes H$ and  $H^\circ\otimes H^\circ$ as well as to the pairing between  $\Upsilon^U_H$ and $\ker\mu^\circ= \{\omega=\sum_i \alpha^i\otimes\beta^i\in H^\circ\otimes H^\circ | \sum_i(\alpha^i\star\beta^i)(X)=0 \,\ \mbox{for}\,\ X\in H\}$, where 		
  	\begin{equation}\label{cd1}
  	<[X\otimes Y] |\omega> =\sum_i \alpha^i(X)\beta^i(Y)\,.
  \end{equation}		
  The following, preserving the bicovariance,  properties are thus satisfied  (c.f. Appendix C)
  	\begin{equation}\label{cd2a}
  	< Z\triangleright[X\otimes Y]|\omega> = <Z\otimes [X\otimes Y]|\Delta^\circ_L(\omega)>\,
  \end{equation}
  where  $\Delta^\circ_L(\omega)= \alpha_{(1)}\star \beta_{(1)} \otimes (\alpha_{(2)}\otimes \beta_{(2)})$, c.f. Appendix C.
  	\begin{equation}\label{cd2}
  	< [X\otimes Y]\triangleleft Z|\omega> = < [X\otimes Y]\otimes Z|\Delta^\circ_R(\omega)>\,
  \end{equation}
  where  $\Delta^\circ_R(\omega)= (\alpha_{(1)}\otimes \beta_{(1)}) \otimes \alpha_{(2)}\star \beta_{(2)}$.
  	\begin{equation}\label{cd3}
  	< \Delta^U_L([X\otimes Y])|\gamma\otimes\omega> = < [X\otimes Y]|\gamma\triangleright\omega>\,
  \end{equation}
  where $\gamma\triangleright\omega= \sum_i\gamma\star\alpha^i\otimes\beta^i $.
  \begin{equation}\label{cd4}
  	< \Delta^U_R([X\otimes Y])|\omega\otimes\gamma> = < [X\otimes Y]|\omega\triangleleft\gamma>\,
  \end{equation}
  where $\omega\triangleleft\gamma= \sum_i\alpha^i\otimes\beta^i\star\gamma $.
  Moreover
  	\begin{equation}\label{cd5}
  	<\delta^U[X\otimes Y] |\alpha> =<[X\otimes Y] |d^U\alpha> =\alpha(X)\varepsilon(Y)-\alpha(Y)\varepsilon(X)\,.
  \end{equation}
  Finally,
  	\begin{equation}\label{cd6}
  <[X\otimes 1]\triangleleft Y|\omega>=	<r(\overline{X}\otimes Y) |\omega> =<\overline{X}\otimes Y |s'(\omega)>= <[X\otimes 1]\otimes Y| \Delta^\circ(\omega)>\,,  
  \end{equation}
  where $r(\overline{X}\otimes Y) =[XY_{(1)}\otimes Y_{(2)}]$	and $s'(\omega)=\sum_i \alpha^i_{(1)}\otimes\alpha^i_{(2)}\star\beta^i$.
  \begin{proof}
  	All these properties can be checked by direct computations using \eqref{vf8}, see  also Appendix C.  For example,  to  show  \eqref{cd2} one computes:
  	$$	< Z\triangleright[X\otimes Y]|\omega> =\sum_i \alpha^i(Z_{(1)}X)\beta^i (Z_{(2)}y)=
  	\sum_i \alpha^i_{(1)}(Z_{(1)})\alpha^i_{(2)}(X)\beta^i_{(1)}(Z_{(2)})\beta^i_{(2)}(Y)$$
  	$$=\sum_i (\alpha^i_{(1)}\star\beta^i_{(1)})(Z)\alpha^i_{(2)}(X)\beta^i_{(2)}(Y)=\omega_{<0>}(Z)<[X\otimes Y] | \omega_{(1)}>\,.$$
  \end{proof}	
  \end{pr}
  Using  isomorpisms of bicovariant bimoduules $\Upsilon^U_H\cong \overline{H}_L\otimes H$ and 
  $\ker\mu^\circ\cong (\ker\epsilon^\circ)_L \otimes H^\circ$ one gets
  \begin{lema}
  	The pairing \eqref{vf7} reduces to the pairing  between left-left Y-D modules  $\overline{H}_L$ and $(\ker\epsilon^\circ)_L  = \{\alpha\in H^\circ | \alpha(1)=0\}$ 
  	\begin{equation}\label{vf8b}
  		<\overline{X}|\alpha> \doteq\alpha(X)\,,\quad \mbox{for}\quad \alpha\in\ker\epsilon^\circ
  	\end{equation}
  	In particular, there is a duality between left-left Y-D module  structures  on  $\overline{H}_L\in {}^{H}_{H}\mathfrak{YD}$ and  $(\ker\epsilon^\circ)_L\in {}^{H^\circ}_{H^\circ}\mathfrak{YD}$  
  	\begin{equation}
  		 \langle \mathrm{ad}^L_Y\overline{X} | \alpha\rangle =\langle Y\otimes \overline{X} |  \widetilde\Xi_L(\alpha) \rangle\,,
  	\end{equation} 
  	where $\widetilde\Xi_L(\alpha)=\alpha_{(1)} \star S^\circ(\alpha_{(3)})\otimes \alpha_{(2)} $. Also (c.f. Appendix C)
  	\begin{equation}
  		\langle \overline{X} | \beta\succ \alpha\rangle =\langle   \Xi_L(\overline{X}) | \beta\otimes \alpha \rangle\,,
  	\end{equation}
  	where $\beta\succ\alpha= \beta\star\alpha $, $\beta\in H^\circ$.
  	
  	\end{lema}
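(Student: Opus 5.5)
The plan is to verify each claim by direct evaluation with the rules \eqref{vf8}. The first task is to see that \eqref{vf8b} is well defined and is exactly what the pairing \eqref{cd1} becomes on the left-coinvariant generating subspaces. Well-definedness is immediate: for $\alpha\in\ker\epsilon^\circ$ one has $\alpha(X+\lambda 1)=\alpha(X)+\lambda\alpha(1)=\alpha(X)$, so $\langle\overline{X}|\alpha\rangle$ depends only on the class $\overline{X}$.

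To see that this is the reduction of \eqref{cd1}, I would use Remark \ref{PhiR}, which identifies $\overline{H}_L$ with $[H\otimes 1]=\Phi_R(\overline{H}_L\otimes 1)$. For $\omega=\sum_i\alpha^i\otimes\beta^i\in\ker\mu^\circ$ one computes
\[
\langle[X\otimes 1]|\omega\rangle=\sum_i\alpha^i(X)\beta^i(1)=\alpha_\omega(X),\qquad \alpha_\omega\doteq(\mathrm{id}\otimes\epsilon^\circ)(\omega).
\]
Moreover $\alpha_\omega(1)=\sum_i(\alpha^i\star\beta^i)(1)=0$, so $\alpha_\omega\in\ker\epsilon^\circ$.

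On the dual side, I would take the analogous factorisation $\ker\mu^\circ\cong(\ker\epsilon^\circ)_L\otimes H^\circ$, the $s'$-type map of \eqref{cd6}. Combined with \eqref{cd6} and \eqref{cd2}, it shows that the full pairing is determined by its restriction to $[H\otimes 1]\times(\ker\epsilon^\circ)_L$, tensored with the canonical pairing of $H$ and $H^\circ$. I expect this step to be the main obstacle. The difficulty is not computational. It is matching conventions, so that the dual isomorphism is the transpose of $\Phi_R$ and not a twisted version of it. I would settle this by checking it on elements $[X\otimes 1]\triangleleft Y$ using \eqref{cd6}.

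For the duality of the Y-D structures, I would compute directly using \eqref{vf8} and $\langle S(Z)|\gamma\rangle=\langle Z|S^\circ(\gamma)\rangle$:
\[
\langle\mathrm{ad}^L_Y\overline{X}|\alpha\rangle=\alpha\big(Y_{(1)}XS(Y_{(2)})\big)=\alpha_{(1)}(Y_{(1)})\,\alpha_{(2)}(X)\,S^\circ(\alpha_{(3)})(Y_{(2)})=\big(\alpha_{(1)}\star S^\circ(\alpha_{(3)})\big)(Y)\,\alpha_{(2)}(X).
\]
This is $\langle Y\otimes\overline{X}|\widetilde\Xi_L(\alpha)\rangle$. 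I would also check that the right-hand side is independent of the representative $X$. Setting $X=1$ gives $(\alpha_{(1)}\star S^\circ(\alpha_{(2)}))(Y)=\varepsilon(Y)\alpha(1)=0$, so the formula descends to $\overline{H}$ even though $\alpha_{(2)}$ need not lie in $\ker\epsilon^\circ$.

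The coaction–action duality is similar:
\[
\langle\Xi_L(\overline{X})|\beta\otimes\alpha\rangle=\beta(X_{(1)})\alpha(X_{(2)})=(\beta\star\alpha)(X)=\langle\overline{X}|\beta\succ\alpha\rangle.
\]
Here $(\beta\star\alpha)(1)=\beta(1)\alpha(1)=0$, so the action $\succ$ preserves $\ker\epsilon^\circ$. Putting $X=1$ on the left gives $\beta(1)\alpha(1)=0$, which confirms independence of the representative.

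Finally, since $\widetilde\Xi_L$ and $\succ$ are the transposes of a left-left Y-D action and coaction, I would conclude that $\big((\ker\epsilon^\circ)_L,\succ,\widetilde\Xi_L\big)$ satisfies the Y-D compatibility of Appendix C. The argument is dualisation: the compatibility identity holds on $\overline{H}_L$, and it transfers to $(\ker\epsilon^\circ)_L$ wherever the pairing separates points, which it does on $H^\circ$ because $H^\circ\subset H^*$.
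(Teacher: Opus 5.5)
Your proposal is correct and is essentially the paper's own (implicit) argument: the reduction is read off from \eqref{cd6} via the identifications $\Upsilon^U_H\cong\overline{H}_L\otimes H$ and $\ker\mu^\circ\cong(\ker\epsilon^\circ)_L\otimes H^\circ$, and the two duality formulas are direct evaluations with \eqref{vf8}, as in your computations. The only difference is in the last step: the paper takes the Y-D property of $(\ker\epsilon^\circ)_L$ from the Woronowicz-type structure ii) of Appendix C on $H^\circ$ (left multiplication together with the coadjoint coaction $\widetilde\Xi_L$), whereas you obtain it by transposing the Y-D identity of $\overline{H}_L$, which is equally valid because $H^\circ\otimes H^\circ\subset(H\otimes H)^*$ is separated by $H\otimes H$.
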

 Of course, the similar results are valid if we set a universal bicovariant FODC on $H$ and a universal FOCC on $H^\circ$. For more concrete forms of duality between matrix and Lie algebra type quantum groups see, e.g. \cite{Gavarini24} and references therein.
  
 \subsection{Special universal covariant vector fields}
Another interesting relationship, being a counterpart of the classical situation, is between quantum
tangent vectors and quantum vector fields, noticed in \cite{BeggsMajid,Weber2025} and earlier \cite{Zumino,Delius,Pflaum}. We mention that general idea of nocommutative (i.e. quantum) vector fields appeared in \cite{Borowiec96} under the name of (right/left) Cartan pairs, see also \cite{AB23} for more recent presentation and historical perspective. They have been used and developed later for a constructing quantum  Riemannian geometry in \cite{BeggsMajid}. 
\begin{rrm} (FODC and vector fields \cite{Borowiec96,AB23})
	Let  $(\mathcal{A}, \mu, 1)$ be a unital associative algebra.  Consider a  FODC $d:\mathcal{A}\rightarrow \Omega$, where $\Omega$ is a bimodule 
	of one-forms. We define its right dual  $\Omega^\dagger\doteq \mathrm{Hom}_{(-,\mathcal{A})}(\Omega, \mathcal{A})$ as the space of right module morphisms, i.e. $\mathcal{A}$-linear maps together with a bimodule structure defined on it \footnote{It should be noticed that $\Omega\cong V\otimes \mathcal{A}$ is a right free module. Therefore, as a vector space $\Omega^\dagger\cong \mathrm{Hom}_\mathbb{K}(V, \mathcal{A})$, where $V=d(\mathcal{A})$.}
	\begin{equation}\label{cvf1}
		(f.X.g)(\omega h) \doteq f X(g\omega)h
	\end{equation}
	where $X\in\Omega^\dagger, f,g,h\in \mathcal{A}, \omega\in\Omega$. Further, with any element $X\in\Omega^\dagger $ one can associate the endomorphism $X^\triangleright\in \mathrm{End}^0_\mathbb{K}(\mathcal{A})\doteq \{A\in \mathrm{End}_\mathbb{K}(\mathcal{A}) | A(1)=0\}$
	\begin{equation}\label{cvf2}
		X^\triangleright (f)\doteq X(df)
	\end{equation}
	via the quantum contraction.
	Thus, the following Leibniz rule  is satisfied
	\begin{equation}\label{cvf3}
		X^\triangleright (fg)= X^\triangleright (f)g + (X.f)^\triangleright (g)\,.
	\end{equation}
	Particularly, in the case of universal  FODC $\Omega=\ker\mu$; $d^U f=f\otimes 1-1\otimes f$ one gets 
	\begin{equation}\label{cvf4}
		X^\triangleright (f) \doteq X(f\otimes 1-1\otimes f)\,,
	\end{equation}
	and $(\ker\mu)^\dagger \cong \mathrm{Hom}_\mathbb{K}(\overline{\mathcal{A}},\mathcal{A})\cong
	\mathrm{End}^0_\mathbb{K}(\mathcal{A})$.
\end{rrm}
Let now $H$ be a Hopf algebra. Consider a universal differential calculus over dual Hopf algebra 
$d: H^\circ\rightarrow  \ker\mu^\circ$, where $d\alpha= \alpha\otimes\varepsilon-\varepsilon\otimes\alpha$, c.f. \eqref{vf6}. Notice that the  bicovariant bimodule $\ker\mu^\circ\cong \ker\epsilon^\circ\otimes H^\circ$ can be considered as a right (or left) free module and comodule. Due to Lemma \ref{lemma1} the following vector spaces are isomorphic
\begin{equation}\label{cvf5}
	 (\ker\mu^\circ)^* \cong \mathrm{Com}_{(-,H^\circ)}(\ker\mu^\circ,H^\circ)\,
	  \subset \mathrm{Hom}_\mathbb{K}(\ker\mu^\circ,H^\circ)\,,
\end{equation}
by $(\ker\mu^\circ)^*\ni v\leftrightarrow \tilde v\in \mathrm{Com}^{(-,H^\circ)}(\ker\mu^\circ,H^\circ)$, where $\tilde v(d\alpha\star\beta)= (v\otimes\mathrm{id})\Delta_R^\circ(d\alpha\star\beta)$ is a right comodule map and $v=\epsilon^\circ \circ \tilde v$. More explicit
\begin{equation}\label{cvf6}
	 \tilde v(d\alpha\star\beta)\doteq v(d\alpha_{(1)}\star\beta_{(1)}) \alpha_{(2)}\star\beta_{(2)}\,.
\end{equation}
One also defines a map to the (right) universal vector fields $(\ker\mu^\circ)^*\ni v\rightarrow \hat v\in (\ker\mu^\circ)^\dagger\cong 
\mathrm{Mod}_{(-,H^\circ)}(\ker\mu^\circ,H^\circ)\subset \mathrm{Hom}_\mathbb{K}(\ker\mu^\circ,H^\circ)$, 
 where  the right module map $\hat v$ is defined as 
\begin{equation}\label{cvf7}
\hat v(d\alpha\star\beta)\doteq \tilde v(d\alpha)\star\beta=v(d\alpha_{(1)})\alpha_{(2)}\star\beta\,.
\end{equation}
Both maps agrees on differential $d\alpha$.
Consequently, an action on $H^\circ$ is given by the map $(\ker\mu^\circ)^* \ni \hat v\rightarrow  v^\triangleright\in \mathrm{End}^0_\mathbb{K}(H^\circ)\doteq \{A\in \mathrm{End}_\mathbb{K}(H^\circ) | A(\varepsilon)=0\}$ reads (c.f. \eqref{cvf2})
\begin{equation}\label{cvf8}
	 v^\triangleright (\alpha)\doteq\hat v(d\alpha)=\tilde v(d\alpha)=v(d\alpha_{(1)})\alpha_{(2)}\,,
\end{equation}
and satisfies the relation: 
\begin{equation}\label{cvf12}
	\Delta^\circ\circ v^\triangleright =(v^\triangleright\otimes \mathrm{id})\circ\Delta^\circ\,.
\end{equation}
\begin{defi} We recall that a vector space (c.f.\cite{Woronowicz,Klimyk,BeggsMajid,Weber2025})
	\footnote{$\alpha(1)=\epsilon^\circ(\alpha)$ and $X(1^\circ)=\varepsilon(X)$.}
	\begin{equation}\label{cvf9}
		 \mathcal{T}_\circ =\{ v\in (\ker\mu^\circ)^*\ |\ v(d\alpha\star\beta)=v(d\alpha)\beta(1)\} 
		 	\end{equation}
will be called  a universal quantum tangent  space for $H^\circ$.
\end{defi}
Thus restricting all three maps to the subdomain $\mathcal{T}_\circ$ give rise to the following:
\begin{lema}\label{cvf10}
	$v\in \mathcal{T} _\circ\ \Longleftrightarrow \hat v=\tilde v\in \mathrm{Hom}_\mathbb{K}(\ker\mu^\circ,H^\circ)$, i.e. 
	$\tilde v(d\alpha\star\beta)=\hat v(d\alpha\star\beta)=\tilde v(d\alpha)\star\beta$.   
 Therefore, we call the corresponding vector field $\hat v$ right covariant \footnote{Classically, they correspond the so-called fundamental, or Killing, or invariant vector fields on a group manifold.}. 
	  where $v^\triangleright(\alpha)= v(d\alpha_{(1)})\alpha_{(2)}$ for $v\in \mathcal{T} _\circ$,
	\begin{proof}
Applying the rule \eqref{cvf9} to \eqref{cvf6} one easily gets \eqref{cvf7}.
Since the map $v\mapsto \tilde v$ is injective, then two remaining ones are injective too. 	Due to the fact that $\ker\mu^\circ\cong  \ker\epsilon^\circ\otimes H^\circ$ is free right module we conclude that $\mathcal{T} _\circ\cong(\ker\epsilon^\circ)^*\cong (\overline{H^\circ})^*$.  
	\end{proof}
\end{lema}
Consider now $\overline{H}_L$  as a left-left Y-D  module. 
 We are going to show how this is related to the notion of covariant vector fields for the universal FODC on $H^\circ$ studied above.

Firstly, we replace the left coaction $\Xi^L(\overline{X})=X_{(1)}\otimes\overline{X_{(2)}} $  by a right $H^\circ$ action $\overline{X}\star\alpha  \doteq \alpha(X_{(1)}) \overline{X_{(2)}} $. 
Next,  to any element $\overline{X}\in H$ one can associate  
\begin{equation}\label{svf1}
	v(\overline{X})(d\alpha\star\beta)\doteq \alpha(\delta(X))\beta(1)=(\alpha(X)-\varepsilon(X)\alpha(1))	\beta(1)
\end{equation}
an element of the universal quantum  tangent space   $v(\overline{X})\in \mathcal{T}_\circ$. Therefore, the image $v(\overline{H}_L)\subset \mathcal{T}_\circ$ one can call a special universal quantum tangent space. This image can be further extend to the images $\tilde v(\overline{H}_L)\subset  \mathrm{Com}_{(-,H^\circ)}(\ker\mu^\circ,H^\circ)$, or 
$\hat v(\overline{H}_L)\subset (\ker\mu^\circ)^\dagger$, or $v^\triangleright(\overline{H}_L)\subset \mathrm{End}^0_{\mathbb{K}}(H^\circ)$, which we would like to call a special covariant quantum vector fields. However, it should be mentioned that the first map $\overline{X}\mapsto v(\overline{X})$ does not need, in general, to be injective. This is because a canonical map
$H\rightarrow (H^\circ)^*$ is not always injective \footnote{The assumption that an algebra $H$ is proper, guarantees injectivity \cite{Sweedler69,Montgomery_book,Radford}.}. We close this section with the following proposition which can be checked by simple computations  (c.f. \eqref{vf8}).  
\begin{pr}\label{svf2}
	 The map $v: \overline{H}_L\rightarrow\mathcal{T}_\circ$ preserves Y-D structure. More exactly,
	 this map is compatible with the right module action  $\overline{X}\star\beta=\beta(X_{(1)})\overline{X_{(2)}}$ via the Leibniz rule, c.f. \eqref{cvf3}
	 \begin{eqnarray}\label{svf3}
	 	v(\overline{X}\star\beta)( d\alpha) = v(\overline{X}) (\beta\star d\alpha)  =
	   v(\overline{X})(d(\beta\star\alpha)-d\beta\star\alpha )=
	 	\beta(X_{(1)})v(\overline{X_{(2)}}) (d\alpha)\,.
	 \end{eqnarray}
	 For left adjoint it provides:
\begin{equation}\label{svf4}
 v(\mathrm{ad}^L_X \overline{Y})(d\alpha) =(\alpha_{(1)}\star\alpha_{(3)})(X)\,v(\overline{Y})(d\alpha_{(2)})\,.
 \end{equation} 
  Cosequently:	
  \begin{equation}\label{svf5}
  	v([\overline{X},\overline{Y}]_q)(d\alpha) =(\alpha_{(1)}\star\alpha_{(3)})(X)\,v(\overline{Y})(d\alpha_{(2)})- \varepsilon(X)v(\overline{Y})(d\alpha)\,.
  \end{equation}  
\end{pr}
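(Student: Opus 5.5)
The plan is to reduce all three identities to the pairing \eqref{cd1}. The key observation is that for every $\omega=\sum_i\alpha^i\otimes\beta^i\in\ker\mu^\circ$ one has
\begin{equation*}
v(\overline{X})(\omega)=\sum_i\alpha^i(X)\beta^i(1)=\langle [X\otimes 1]\,|\,\omega\rangle .
\end{equation*}
I would check this first on the generators $d\alpha\star\beta=\alpha\otimes\beta-\varepsilon\otimes\alpha\star\beta$, using the right action of $H^\circ$ on the second tensor factor. The right-hand side then gives $\alpha(X)\beta(1)-\varepsilon(X)\alpha(1)\beta(1)$, which is exactly \eqref{svf1}.

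This reformulation settles well-definedness on $\overline{H}$ at once, because $[1\otimes 1]=[\Delta(1)]=0$. It also shows that $v(\overline{X})$ is linear on all of $\ker\mu^\circ$, not only on elements of the form $d\alpha\star\beta$. The membership $v(\overline{X})\in\mathcal{T}_\circ$ is then immediate from the factor $\beta(1)$.

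For \eqref{svf3} I would use the left action $\beta\star d\alpha=\beta\star\alpha\otimes\varepsilon-\beta\otimes\alpha$ together with the Leibniz rule $d(\beta\star\alpha)=d\beta\star\alpha+\beta\star d\alpha$, which gives the middle equality. Pairing with $[X\otimes1]$ yields
\begin{equation*}
(\beta\star\alpha)(X)-\beta(X)\alpha(1)=\beta(X_{(1)})\alpha(X_{(2)})-\beta(X_{(1)})\varepsilon(X_{(2)})\alpha(1).
\end{equation*}
The right-hand expression equals $\beta(X_{(1)})\,v(\overline{X_{(2)}})(d\alpha)$, which is $v(\overline{X}\star\beta)(d\alpha)$ by linearity of $v$.

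For \eqref{svf4} I would compute $v(\mathrm{ad}^L_X\overline{Y})(d\alpha)=\alpha(X_{(1)}YS(X_{(2)}))-\varepsilon(X)\varepsilon(Y)\alpha(1)$. Applying \eqref{vf8} twice, the first term becomes $\alpha_{(1)}(X_{(1)})\alpha_{(2)}(Y)\alpha_{(3)}(S(X_{(2)}))=(\alpha_{(1)}\star S^\circ(\alpha_{(3)}))(X)\,\alpha_{(2)}(Y)$. I then need to absorb the second term. Inserting $\alpha_{(2)}(1)$ and using the antipode identity $\alpha_{(1)}\star S^\circ(\alpha_{(2)})=\epsilon^\circ(\alpha)\varepsilon$ shows
\begin{equation*}
(\alpha_{(1)}\star S^\circ(\alpha_{(3)}))(X)\,\alpha_{(2)}(1)\varepsilon(Y)=\varepsilon(X)\alpha(1)\varepsilon(Y).
\end{equation*}
Subtracting, the result is $(\alpha_{(1)}\star S^\circ(\alpha_{(3)}))(X)\,v(\overline{Y})(d\alpha_{(2)})$, which agrees with the coaction $\widetilde\Xi_L$ of the preceding Lemma. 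Accordingly I read the factor $\alpha_{(3)}$ in \eqref{svf4} as carrying the antipode $S^\circ$. Formula \eqref{svf5} then follows by linearity from \eqref{qLie}, since $[\overline{X},\overline{Y}]_q=\mathrm{ad}^L_X\overline{Y}-\varepsilon(X)\overline{Y}$.

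I expect the main obstacle to be bookkeeping rather than ideas. One has to fix the conventions for the left and right $H^\circ$-actions on $\ker\mu^\circ$ consistently with \eqref{cd3}–\eqref{cd4}, and to place $S^\circ$ correctly in \eqref{svf4}. Once the identification of $v(\overline{X})$ with the pairing against $[X\otimes1]$ is in hand, each identity is a two-line Sweedler computation.
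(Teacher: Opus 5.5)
Your proposal is correct and follows the paper's own route, which is left as a direct Sweedler computation with \eqref{vf8}; identifying $v(\overline{X})$ with $\langle [X\otimes 1]\,|\,\cdot\,\rangle$ via \eqref{cd1} is just a tidy way to organize that computation and to get well-definedness for free. Putting $S^\circ$ on $\alpha_{(3)}$ in \eqref{svf4}--\eqref{svf5} is in fact necessary, not merely a choice of reading: for group-like $X=g$, $Y=h$ the left side is $\alpha(ghg^{-1})-\alpha(1)$, while the printed right side gives $\alpha(ghg)-\alpha(g^2)$, and your corrected form agrees with $\widetilde\Xi_L$ in the preceding Lemma.
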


%

\subsection{Some remarks on classification of bicovariant FOCCs}
Pushing forward the methods of generating comodules out of vector subspaces, $V\mapsto 	\mathcal{L}(V)$,  presented earlier, we can state now as follows:
 \begin{pr}Let $V\subset \overline{H}_L$ be a non-trivial vector subspace. We denote by 
 	\begin{equation}
 		V_L\doteq \mathrm{Span}\{  v\star\alpha \,| v\in V, \alpha\in H^* \}\,, 
 	\end{equation}
 	the smallest subcomodule $V_L\subset \overline{H}_L$ containing $V$.   Then one defines
	\begin{eqnarray}\label{spanBimoduleL}
	\mathcal{L}(V)\doteq	H\blacktriangleright V_L=\mathrm{Span}\{ h\blacktriangleright v\, | v\in V_L,  h\in H\}\,    		
	\end{eqnarray}
	as the smallest Y-D submodule containing $V$ and $V_L$. Alternatively, one can consider first  
		\begin{eqnarray}\label{spanBimoduleL2}
		V^\blacktriangleright\doteq	H\blacktriangleright V=\mathrm{Span}\{ h\blacktriangleright v\, | v\in V,  h\in H\}\, .  		
	\end{eqnarray}
	Then define
		\begin{equation}\label{spnComoduleL2}
			\mathcal{L}(V)\doteq  \mathrm{Span}\{  v\star\alpha \,| v\in V^\blacktriangleright, \alpha\in H^* \}\,. 
	\end{equation}
	\begin{proof}
	The equivalence of \eqref{spanBimoduleL} and \eqref{spnComoduleL2}	comes from the fact that 	$\mathcal{L}(V)$ is unique.
	We need to show that $\Xi_L(H\blacktriangleright V_L)\subset H\otimes (H\blacktriangleright V_L)$. But this follows directly from formula	  \eqref{b9}. Similar arguments applies to  \eqref{spnComoduleL2}.
	\end{proof}
\end{pr}
Notice that the algebra $H^*$ can be replaced  by  the  subalgebra $H^\circ$ if it is a dense subalgebra in $H^*$, i.e., $H$ is a proper algebra \cite{Sweedler69, Montgomery_book,Radford}.

Again, one-dimensional subspaces become very important since the properties described in Lemma \ref{lem_gen_space} are still valid for $\mathcal{L}(V)$.

  We already know that for a set coalgebra  $\mathbb{K}(\mathcal{O})$, two FOCCs are isomorphic if, and only if they generate isomorphic graphs \ref{th_set_iso}. 
If $\mathcal{O}=G$ is a group, then we have a canonical Hopf algebra structure, and we may ask about bicovariant FOCCs.
\begin{lema}
	The bicovariant FOCCs for a group algebra $\mathbb{K}(G)$ is generated by non-trivial normal subgroups $H\subset G$:
	\begin{eqnarray}
        \mathcal{L}(\overline{\mathbb{K}(H)})&=&\overline{\mathbb{K}(H)}.
	\end{eqnarray}
	\begin{proof}
		We already know that one deals with a left covariant FOCC. Adinvariance of $\overline{\mathbb{K}(H)}$ is a direct result of $H$ being the normal subgroup.
	\end{proof}
\end{lema}

Now, we consider the universal enveloping algebra $U(\mathfrak{g})$ for some Lie algebra ~~$\mathfrak{g}=\mathrm{span}\{X_i;i\in I\}$.
\begin{lema}\label{pr_ex_borrel}
    The partial classification of bicovariant FOCCs for universal enveloping algebra $U(\mathfrak{g})$: The Y-D modules are given by non-trivial Lie-ideal $\mathfrak{g}\supseteq\mathfrak{h}=\mathrm{span}\{X_i;i\in J^\mathfrak{h}\}$\footnote{First we choose a basis in $\mathfrak{h}$, then we extend it to a basis in $\mathfrak{g}$ and construct the corresponding Poincare-Birkhoff-Witt basis in $U(\mathfrak{g})$. We can also take $\mathfrak{h}=\mathfrak{g}$.}, and a number $M\in\mathbb{N}_+$:
    \begin{eqnarray}
     \mathcal{L}^M({\overline{\mathfrak{h}}})&:=&\overline{U(\mathfrak{h})^M}=\mathrm{span}\left\{\overline{\prod_{i\in J^\mathfrak{h}}X_i^{n_i}};n_i\in\mathbb{N}_0,0<\sum_{i\in J^\mathfrak{h}}n_i\leq M\right\}\,.
    \end{eqnarray}
    is a Yetter-Drinfeld module, and it generates the bicovariant FOCC over Hopf algebra $U(\mathfrak{g})$. Moreover, for $M=1$ the corresponding cocalculi are cocommutative.
    \begin{proof}
       Again we know that one deals with a left covariant FOCC. Moreover, for $M=1$ it is cocommutative since it is generated by primitive elements. An adjoint action in the universal enveloping algebra reduces to Lie brackets. This shows that Y-D modules related to Lie ideals are indeed adinvariant. The additional index $M$ appears because neither adjoint action nor coaction can increase the order of an element.
    \end{proof}
\end{lema}
\begin{rrm}
  This partial classification is valid for every universal enveloping algebra. However, there can be more, as every non-unital central subset $C\subset U(\mathfrak{g})$ also generates some Y-D submodule. 
  We will show it in the following example.    
\end{rrm}

\begin{ex}[$U(\mathfrak{b}_+)$]
    Consider the universal enveloping algebra with two generators $H,~E$, and commutator $[H,E]=E$.

    We see that the commutator cannot raise the power of $H$. So we can define the families of Y-D modules with two indices ($m\leq N\geq 1$):
    \begin{eqnarray}
        \mathcal{L}<\overline{H^mE^N}>&=&\mathrm{Span}\{\overline{H^iE^j};i\leq m,i+j\leq N\},\\
        \dim \mathcal{L}<\overline{H^mE^N}>&=&(N-\frac{m}{2})(m+1)+m.
    \end{eqnarray}
    Notice that if $m$ is odd, then $(m+1)$ is even, so the dimension is a natural number. What is more, for $m=0$ we obtain $\mathcal{L}(<\overline{E^N}>)$ and for $m=N$ we are obtaining $\mathcal{L}(<\overline{H^N}>)$. In all other cases, they cannot be presented in the form of lemma \ref{pr_ex_borrel}.

    In a case of enveloping algebra $U(\mathfrak{b}_+)$, we have two non-trivial cocommutative and bicovariant cocalculi:
    \begin{eqnarray}
        \mathcal{L}<\overline{E}>&=&<\overline{E}>\,,\\
        \mathcal{L}<\overline{H}>&=&\mathrm{Span}\{\overline{H},\overline{E}\}\,.
    \end{eqnarray}
\end{ex}
 
\begin{ex}(Sweedler Hopf algebra)
Let us go back to the example~\ref{ex_sweedler_coalgebra} and remind that this is a Hopf algebra $H=\mathrm{Span}\{1,g,X,Xg\}$ with the relations:
\begin{eqnarray}
    g^2=1,\qquad\qquad X^2=0,\qquad\qquad Xg=-gX,
\end{eqnarray}
and the antipod $S(g)=g^{-1}\equiv g$, $S(X)=-gX$.
In the previous classification, we did not search for bicovariant FOCCs. Now we find them and check how they are related to the FOCCs we found earlier.
There are two left-left Y-D submodules in $\overline{H}$:
\begin{itemize}
    \item One-dimensional: $\mathcal{L}<\overline{X}>=<\overline{X}>$,
    \item Two-dimensional: $\mathcal{L}<\overline{g}>=\mathrm{Span}\{\overline{g},\overline{Xg}\}$.
\end{itemize}
Moreover, bicovariant bimodules:
\begin{eqnarray}
    \Phi_R(\mathcal{L}<\overline{X}>)=\Upsilon<[Xg\otimes X]>+\Upsilon<[\phi(Xg)\otimes \phi(X)]>=\nonumber\\
    \mathrm{Span}\{[X\otimes 1],[Xg\otimes g],[Xg\otimes X],[X\otimes Xg]\}\,,
    \end{eqnarray}
 \begin{eqnarray}   
    \Phi_R(\mathcal{L}<\overline{g}>)=\Upsilon<[X\otimes X]>\oplus\Upsilon<[\phi(X)\otimes \phi(X)]>=\nonumber\\
    \mathrm{Span}\{[g\otimes 1],[1\otimes g], [1\otimes X], [g\otimes Xg],\\ {}[Xg\otimes Xg],[X\otimes X], [Xg\otimes 1],[X\otimes 1]\}\nonumber\,,
\end{eqnarray}
decompose into the sum/direct sum of noncovariant calculi and their images under the mapping $\phi$ that is an automorphis of the coalgebra but not the Hopf algebra.
The first four terms in the last formula span the right covariant FOCC. The concluding remark is that the direct sum of the two bicovariant bimodules yields the universal bicomodule in this case. 
\end{ex}

  \section{Examples of bicovariant FOCC on quantum groups}

\subsection{$U_Q(\mathfrak{b}_+)$, $Q\neq 0, 1\neq Q^n\in\mathbb{K}, n\in\mathbb{N}$}\label{ex_Uqb+}

In this section we will consider deformed universal enveloping algebra for $ax+b$ group. We will classify all bicovariant FOCCs and compare it to similar classification of bicovariant FODC in \cite{Oeckl}.

We have three generators $X,g,g^{-1}$ with commutation relations $Xg=QgX$ and $gg^{-1}=1$. Elements $g^n$ ($n\in \mathbb{Z} $) are group-like, and $X$ is skew-primitive one:
\begin{eqnarray}
    \Delta(X)=X\otimes 1+g\otimes X,&\quad&S(X)=-g^{-1}X\,, \quad S(g^n)=g^{-n}.
\end{eqnarray}
Thus ($\varepsilon(g)=1, \varepsilon(X)=0$)
\begin{eqnarray}
	  U_Q(\mathfrak{b}_+)&=&\mathrm{Span}\left\{X^ng^m|(n,m)\in(\mathbb{N}_0\times\mathbb{Z}) \right\}.
\end{eqnarray}

We are looking for left-left Y-D submodules:
\begin{eqnarray}
    \mathcal{L}\subset \overline{U_Q(\mathfrak{b}_+)}_L&=&\mathrm{Span}\left\{\overline{X^ng^m}|(n,m)\in(\mathbb{N}_0\times\mathbb{Z}) \setminus (0,0) \right\}.
\end{eqnarray}
For the classification we used natural parameter $n\in\mathbb{N}_+$. The finite-dimensional Y-D submodulus are:
\begin{eqnarray}
    \mathcal{L}<\overline{g^{-n}}>&=&\mathrm{span}\{\overline{X^ig^{-n}};0\leq i\leq n\}\nonumber\\
    \dim\mathcal{L}<\overline{g^{-n}}>&=&n+1
\end{eqnarray}
There are also infinite-dimensional Y-D submodules:
\begin{eqnarray}
    \mathcal{L}<\overline{g^{n}}>&=&\mathrm{Span}\{\overline{X^ig^{n}};0\leq i\}\\
    \mathcal{L}<\overline{X}>&=&\mathrm{Span}\{\overline{X^i};0<i\}\\
    \mathcal{L}<\overline{X^{n+1}g^{-n}}>&=&\mathrm{Span}\{\overline{X^ig^{-n}};0\leq i\}
\end{eqnarray}
This is classification of all possible non-decomposable Y-D sumbodules. Let us note, that $\mathcal{L}<\overline{g^{-n}}>\subset\mathcal{L}<\overline{X^{n+1}g^{-n}}>$.

The most important piece of calculation to check which singleton generate finite dimensional Y-D modulus is adjoint action with $X$:
\begin{eqnarray}
    \mathrm{Ad}_X\overline{X^mg^n}&=&(1-Q^{-n-m})\overline{X^{m+1}g^n}.
\end{eqnarray}
If we start with element $\overline{X^mg^n}$, we will generate infinite tower with just this adjoint action, unless it will be cut at some point. This is possible only for $m+n=0$, so we can see, that for finite-dimensional cases, we need negative power of $g$.

Now, we compare our result with \cite{Oeckl}. We have one-to-one correspondence between finite-dimensional calculi. However, it is not the case with the infinite-dimensional ones. A Y-D modules in \cite{Oeckl} take the form:
\begin{eqnarray}
    \Theta_{\lambda\neq 1}&:=&\mathrm{span}\left\{(1-\lambda g)(1-g),X^i(1-\lambda g);i\in\mathbb{N}_+\right\},\\
    \Theta_{1}&:=&\mathrm{span}\left\{(1-g),X^i(1-g);i\in\mathbb{N}_+\right\}.
\end{eqnarray}
 Consequently, it can be checked that only $\Theta_{q^k}$ for $k\in\mathbb{Z}$ have a dual counterpart. 

\subsection{$U_q(\mathfrak{sl}(2))$, $q\neq 0, q^{2n}\neq 1,\ n\geq 1$}\label{ex_Uqsu2}

Consider the Drinfeld-Jimbo deformed enveloping algebra of $\mathfrak{sl}(2)$. There are four generators: $\left<E,F,K,K^{-1}\right>$ with the relations (c.f. \cite{Klimyk}):
\begin{eqnarray}
    KK^{-1}=K^{-1}K=1,&\qquad&KE=q^2EK,\\
    \left[E,F\right]=\frac{K-K^{-1}}{q-q^{-1}},&\qquad&KF=q^{-2}FK.
\end{eqnarray}
The coalgebra structure is:\\
\begin{center}
\begin{tabular}{ccc}
    $\Delta(K)=K\otimes K$ & $\varepsilon(K)=1$ & $S(K)=K^{-1}$\\
    $\Delta(E)=E\otimes K+1\otimes E$ & $\varepsilon(E)=0$ & $S(E)=-EK^{-1}$\\
    $\Delta(F)=F\otimes 1+K^{-1}\otimes F$ & $\varepsilon(F)=0$ & $S(F)=-KF$
\end{tabular}
\end{center}

The lowest-dimensional bicovariant FOCC is generated by four-dimensional Y-D module $\mathcal{L}<\overline{K}>$:
\begin{eqnarray}
    \upsilon_{00}=\overline{K},&\qquad\qquad&\upsilon_{10}=\overline{E},\\
    \upsilon_{01}=\overline{FK},&\qquad\qquad&\upsilon_{11}=\overline{EF-q^2FE}.
\end{eqnarray}
Quantum Lie algebra structure for this  FOCC is presented in Appendix F.

It is known that there exist two four-dimensional bicovariant FODC on $SL_q(2)$ called $D_-$ and $D_+$ (They were found by Stachura\cite{Stachura92}, see also  \cite{Woronowicz3} for left covariant three dimensional FODC, c.f. \cite{Klimyk}), which exist in the dual Hopf algebra to the one considered here\footnote{Dual pairing is defined in~\cite[section 4.4]{Klimyk} and \cite{Gavarini24} in more specific context.}. However, we have found only one dual four-dimensional bicovariant cocalculi on $U_q(\mathfrak{sl}(2))$. With some calculations, one finds out that cocalculus $\Phi_R(\mathcal{L}<\overline{K}>)$ is dual to $D_-$\footnote{One needs to be careful not compare left Y-D modules with the right ones.}. Because there is no other four-dimensional bicovariant cocalculi on $U_q(\mathfrak{sl}(2))$, we conclude that there is no FOCC dual to $D_+$ in $U_q(\mathfrak{su}(2))$. There is bigger Hopf algebra which is dual to $SU_q(2)$ \cite[chapter 3.1]{Klimyk}, and there exist dual cocalculi for both $D_+$ and $D_-$.

The second lowest-dimensional Y-D submodules $\mathcal{L}<\overline{K^2}>$ have nine elements.
\begin{eqnarray}
    \upsilon_{00}=\overline{K^2},&\qquad\upsilon_{10}=\overline{EK},&\qquad\upsilon_{20}=\overline{E^2},\\
    \upsilon_{01}=\overline{FK^2},&\qquad\upsilon_{11}=\overline{(EF-q^4FE)K},&\qquad\upsilon_{12}=\overline{E^2F-q^4FE^2},\\
    \upsilon_{02}=\overline{F^2K^2},&\qquad\upsilon_{21}=\overline{(EF^2-q^4F^2E)K},&\qquad\upsilon_{22}=\overline{E^2F^2-(q^2+q^4)EF^2E+q^6F^2E^2}.\nonumber\\
\end{eqnarray}
We did not calculate higher-dimensional ones, but we can propose a conjecture:
\begin{obs}[Conjecture]
    All finite-dimensional non-decomposable FOCC for $U_q(\mathfrak{sl}(2))$ are generated Yetter-Drinfeld modul $\mathcal{L}<\overline{K^n}>$ for $n>0$, and their dimension is:
    \begin{eqnarray}
        \dim\mathcal{L}<\overline{K^n}>&=&(n+1)^2.
    \end{eqnarray}
\end{obs}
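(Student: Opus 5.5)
The plan is to turn the conjecture into a statement about finite-dimensional ad-invariant subspaces of $U_q(\mathfrak{sl}(2))$, and then use the known structure of its locally finite part. By the Classification theorem (Proposition \ref{ClasBicov}), bicovariant FOCCs correspond to left-left Y-D submodules $\mathcal{L}\subset\overline{H}_L$ satisfying \eqref{LLYDcondition}. "Finite-dimensional non-decomposable" then means $\dim\mathcal{L}<\infty$ and $\mathcal{L}$ is not a sum of two proper Y-D submodules. I would work throughout in the PBW basis $\{E^aF^bK^m\}$. Since $\mathrm{ad}^L_K$ acts diagonally on this basis with eigenvalue $q^{2(a-b)}$, weight spaces are available, and $q^{2n}\neq1$ guarantees that they separate.

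\emph{Step 1: the ad-condition.} Here I would invoke the Joseph--Letzter description of the ad-locally finite part $F(U_q)$. With the present conventions for $\mathrm{ad}^L$ this should read $F(U_q)=\bigoplus_{n\ge0}\mathrm{ad}(U_q)K^{-n}$, possibly up to the replacement $K\mapsto K^{-1}$ or a restriction to even powers; I would check the signs first on $\mathcal{L}<\overline{K}>$ and $\mathcal{L}<\overline{K^2}>$ from the text. Each summand $\mathrm{ad}(U_q)K^{\pm n}$ is isomorphic to $V(n)\otimes V(n)^*$ and so has dimension $(n+1)^2$. Every finite-dimensional $\mathrm{ad}$-stable subspace lies in $F(U_q)$, so any finite-dimensional $\mathcal{L}$ lies in the image of $F(U_q)$ in $\overline{H}$. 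This step fixes the candidate dimensions. I would also check directly that the displayed bases for $n=1,2$ are exactly $\mathrm{ad}(U_q)\overline{K^n}$, as a consistency test of the conventions.

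\emph{Step 2: the comodule condition.} Next I would show that each $\mathrm{ad}(U_q)\overline{K^n}$ is a left subcomodule. Since $\Xi_L(\overline{K^n})=K^n\otimes\overline{K^n}$, this follows from the Y-D compatibility \eqref{b9}, exactly as in the proposition on $\mathcal{L}(V)$. Hence $\mathcal{L}<\overline{K^n}>=\mathrm{ad}(U_q)\overline{K^n}$, and its dimension is $(n+1)^2$.

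\emph{Step 3: no other indecomposables (the main obstacle).} It remains to prove that every nonzero finite-dimensional Y-D submodule is a sum of some of the $\mathcal{L}<\overline{K^n}>$. Indecomposability then forces it to equal a single one, and I would also verify separately that each $\mathcal{L}<\overline{K^n}>$ is itself non-decomposable. A subspace that is merely ad-invariant need not be such a sum: $V(n)\otimes V(n)^*$ decomposes under $\mathrm{ad}$ into spins $0,\dots,n$. So the coaction has to be what forbids proper pieces.

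My first attempt at Step 3 goes as follows. Take $0\neq v\in\mathcal{L}$ and expand it in the PBW basis. Act by functionals $\alpha\in H^\circ$ via $\bar v\star\alpha=\alpha(v_{(1)})\overline{v_{(2)}}$, choosing $\alpha$ to pick off the leftmost tensor factor of top $E$- and $F$-degree. Because $\Delta(E^aF^bK^m)$ contains the term $E^aF^bK^m\otimes K^{m'}$, suitable matrix-coefficient functionals applied to $v$ produce a nonzero multiple of $\overline{K^{m}}$ for some $m$. This would show that $\mathcal{L}\ni\overline{K^m}$, and hence $\mathcal{L}\supseteq\mathcal{L}<\overline{K^m}>$.

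Iterating this on the quotient $\mathcal{L}/\mathcal{L}<\overline{K^m}>$ (or on complements chosen inside $F(U_q)$) yields the sum decomposition. The delicate points are:
\begin{itemize}
\item ensuring that the extracted group-like term really survives modulo $\mathbb{K}1$, that is, $m\neq0$;
\item ensuring that the sum $\bigoplus_n\mathcal{L}<\overline{K^n}>$ is direct, which I would test by comparing $K$-weights together with the leading group-like parts under $\Xi_L$.
\end{itemize}
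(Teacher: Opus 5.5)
The paper gives no proof of this statement. It is left as a conjecture, supported only by the explicit bases for $n=1,2$ and the $\mathrm{Ad}_E,\mathrm{Ad}_F$ formulas, so your argument has to stand on its own.

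Steps 1--2 are sound:
\begin{itemize}
\item with these conventions the ad-finite generators are $K^n$, $n\ge 0$; the paper's formula $\mathrm{Ad}_E\overline{E^mK^n}=(1-q^{2n})\overline{E^{m+1}K^{n-1}}$ settles your sign question;
\item \eqref{b9} gives the comodule property;
\item the dimension $(n+1)^2$ survives in $\overline{H}$ because the Joseph--Letzter sum is direct with $\mathrm{ad}(U_q)K^0=\mathbb{K}1$.
\end{itemize}

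The genuine gap is Step 3, which is the whole content of the conjecture. Your functional, applied to a term $E^aF^bK^m$ with $(a,b)$ maximal, returns exactly $\overline{K^{a+m}}$, and you have no control over $a+m$. If it is $0$ you get $\overline{1}=0$. If it is nonzero you still cannot tell which $\mathcal{L}<\overline{K^n}>$ you have landed in. So neither the simplicity of $\mathcal{L}<\overline{K^n}>$ nor the decomposition of a general $\mathcal{L}$ follows. Simplicity is genuinely needed: as an ad-module $\mathcal{L}<\overline{K^n}>$ is $\bigoplus_{k=0}^nV(2k)$, and you only promise to check it. The iteration on $\mathcal{L}/\mathcal{L}<\overline{K^m}>$ also fails as stated. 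That quotient is not a subspace of $\overline{H}$, so there is no PBW expansion or coproduct to extract from, and Y-D complements inside $F(U_q)$ are not available a priori.

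What is missing is a leading-term lemma for $\mathrm{ad}(U_q)K^n$.
\begin{itemize}
\item Filter by total degree in $E,F$. Up to terms of lower degree, $\mathrm{ad}_E(E^aF^bK^m)=(1-q^{2m})E^{a+1}F^bK^{m-1}$ and $\mathrm{ad}_F(E^aF^bK^m)=(1-q^{2(b-a-m)})E^aF^{b+1}K^m$.
\item Hence the elements $\mathrm{ad}_F^b\mathrm{ad}_E^aK^n$, $0\le a,b\le n$, have pairwise distinct leading monomials $E^aF^bK^{n-a}$ with nonzero coefficients. Since $\dim\mathrm{ad}(U_q)K^n=(n+1)^2$, they form a basis.
\item Consequently every top-degree PBW term of a nonzero $x\in\mathrm{ad}(U_q)K^n$ has $a+m=n\ge 1$. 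Such a term is maximal in your sense: no other term has $a'\ge a$, $b'\ge b$.
\item Take the functional in $H^*$ dual to that term. Then $\bar x\star\alpha=c\,\overline{K^n}$ with $c\neq 0$, because $\Delta(E^aF^bK^m)$ contains $E^aF^bK^m\otimes K^{a+m}$ with coefficient $1$ and no other term of $x$ contributes.
\end{itemize}

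Then replace quotienting by subtraction. For $\bar x\in\mathcal{L}$, write $x=\sum_{n\ge 1}x_n$ along the Joseph--Letzter decomposition. Top-degree terms of different $x_n$ carry different $K$-exponents $n-a$ and cannot cancel. So you extract some $\overline{K^{n_0}}$, obtain $\mathcal{L}<\overline{K^{n_0}}>\subseteq\mathcal{L}$, subtract $x_{n_0}$ and induct. This shows at once that each $\mathcal{L}<\overline{K^n}>$ is simple and that every finite-dimensional Y-D submodule equals $\bigoplus_{n\in S}\mathcal{L}<\overline{K^n}>$, which is exactly what the conjecture requires.
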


Using the adjoint action of $E$ and $F$, one can show that all modules in a form $\mathcal{L}<\overline{K^n}>$ have infinite dimension for $n<0$.
\begin{eqnarray}
    \mathrm{Ad}_E\overline{E^mK^n}&=&(1-q^{2n})\overline{E^{m+1}K^{n-1}},\\
    \mathrm{Ad}_F\overline{F^mK^n}&=&(1-q^{2(m-n)})\overline{F^{m+1}K^n}.
\end{eqnarray}
One cannot treat these formulas as a proof that given submodules are finite-dimensional; however, it is enough to show that all Y-D submodules of a type $\mathcal{L}<\overline{K^{-n}}>$ have infinite dimension.

\subsection{$SL_q(2)\,,\ q\neq 0$}\label{ex_SUq2}
Now, we consider the deformed matrix group $SL_q(2)$ (quantum matrix group \cite{Woronowicz2}, see also \cite{Schmuedgen}) in more general context.  This is an associative, unital algebra with the following generating relations:
\begin{eqnarray}
    &ux=qxu,\qquad vx=qxv,\qquad uy=q^{-1}yu,\qquad vy=q^{-1}yv,&\\
    &uv=vu,\qquad xy=1+q^{-1}uv,\qquad yx=1+quv.&
\end{eqnarray}
The coproduct  and antipodes are:\newline
\begin{center}
\begin{tabular}{||c||c|c|c||}\hline
    & $\Delta$ & $\varepsilon$ & $S$ \\\hline
    $x$ & $x\otimes x+u\otimes v$ & $1$ & $y$ \\
    $y$ & $y\otimes y+v\otimes u$ & $1$ & $x$ \\
    $u$ & $x\otimes u+u\otimes y$ & $0$ & $-qu$ \\
    $v$ & $y\otimes v+v\otimes x$ & $0$ & $-q^{-1}v$ \\\hline
\end{tabular}
\end{center}
providing the Hopf algebra structure, see also, e.g. \cite{Klimyk}). 

As we believe, FOCCs are more suited to quantized universal enveloping Lie algebras; this is the only deformed matrix group one considers in this paper. It is still a Hopf algebra, so we can define codifferential calculus, and it can be informative. 

First, we classify the lowest-dimensional right-covariant FOCCs with respect to left subcomodules $M^L\in\overline{SL_q(2)}$. There are no dimension one cocalculi, and for dimension two:
\begin{eqnarray}
	M^L<\overline{x}>&=&\mathrm{span}\{\overline{x},\overline{v}\},\\
	M^L<\overline{y}>&=&\mathrm{span}\{\overline{y},\overline{u}\}.
\end{eqnarray}
Dimension three left comodules:
\begin{eqnarray}
	M^L<\overline{x^2}>&=&\mathrm{span}\{\overline{x^2},\overline{xv},\overline{v^2}\},\\
	M^L<\overline{y^2}>&=&\mathrm{span}\{\overline{y^2},\overline{yu},\overline{u^2}\},\\
	M^L<\overline{xy}>&=&\mathrm{span}\{\overline{xy},\overline{xu},\overline{yv}\}.
\end{eqnarray}

One also finds that all non-decomposable bicovariant FOCCs are infinite-dimensional and can be classified with an integer number $M\in\mathbb{Z}$:
\begin{eqnarray}
	\mathcal{L}^M&=&\mathrm{span}\{\overline{v^mx^{t}y^{s}u^n};m,n,t,s\in\mathbb{N}; m+t-n-s=M\}.
\end{eqnarray}
For particular cases, one can find generating singletons:
\begin{eqnarray}
	\mathcal{L}^{M>0}=\mathcal{L}<\overline{x^M}>,&\qquad\mathcal{L}^{M=0}=\mathcal{L}<\overline{xy}>,&\qquad\mathcal{L}^{M<0}=\mathcal{L}<\overline{y^M}>.
\end{eqnarray}
If we take into account the automorphism $\phi$ (c.f. Exercise \ref{ex_M2x2_coalgebra}), we can consider only $M\in\mathbb{N}_0$.
By duality arguments (Appendix D), this suggests that there are no finite-dimensional FODCs on the Drinfeld-Jimbo quantized Lie algebra $sl_q(2)$ from the previous subsection. In fact, we did not find any in the literature.

\subsection{$\kappa$-Poincar\'e Hopf algebra}
 We  use a classical basis making $\kappa$-Poincar\'e Hopf algebra a Drinfeld-Jimbo type deformation, as introduced in \cite{Pachol}. There are 11 generators\footnote{This example would work for $D+1$-dimensional Poincar\'e algebra as well.}:
\begin{eqnarray}
    H&:=&\mathrm{gen}\left<\Pi_0,\Pi_0^{-1},P_j,N_j,M_j;j=1,2,3\right>.
\end{eqnarray}
All non-zero commutation relation are (here $\kappa\in \mathbb{R}$ is the deformation parameter):  
\begin{eqnarray}
    {}[N_j,\Pi_0]=-\frac{i}{\kappa}P_j,&\quad[N_j,M_k]=-i\epsilon_{jkl}N_l,&\quad[N_j,N_k]=-i\epsilon_{jkl}M_l,\\
    {}[M_j,P_k]=i\epsilon_{jkl}P_l,&\quad[M_j,M_l]=i\epsilon_{jkl}M_l,&\quad[N_j,P_k]=-\frac{i}{2}\delta_{jk}G_\kappa.
\end{eqnarray}
where $G_\kappa:=\kappa(\Pi_0-\Pi_0^{-1})+\frac{1}{\kappa}\overrightarrow{P}^2\Pi_0^{-1}$ and $\overrightarrow{P}^2=\sum_jP_jP_j$. To have full Hopf algebra, we need coproducts:
\begin{eqnarray}
    \Delta\Pi_0=\Pi_0\otimes\Pi_0,\quad\Delta P_j=P_j\otimes\Pi_0+1\otimes P_j,\qquad\Delta M_j=M_j\otimes 1+1\otimes M_j,\\
    \Delta N_j=N_j\otimes 1+\Pi_0^{-1}\otimes N_j-\frac{1}{\kappa}\epsilon_{jkl}P_k\Pi_0^{-1}\otimes M_l,
\end{eqnarray}
with $\varepsilon\Pi_0=1$ and $\varepsilon P_j=\varepsilon M_j=\varepsilon N_j=0$ and antipods:
\begin{eqnarray}
    S\Pi_0=\Pi_0^{-1},\quad S P_j=-P_j\Pi_0^{-1},\quad S M_j=-M_j,\quad S N_j=-\Pi_0N_j-\frac{1}{\kappa}\epsilon_{jkl}P_kM_l.
\end{eqnarray}

For this example we will write only lowest-dimensional bicovariant FOCC, which is generated by five-dimensional Y-D module:
\begin{eqnarray}
    \mathcal{L}<\overline{\Pi_0}>&=&\mathrm{span}\{\upsilon_0=\overline{\Pi_0},\upsilon_i=\overline{P_i},\upsilon_C=\overline{C_\kappa}\},
\end{eqnarray}
where $C_\kappa=\kappa^2(\Pi_0+\Pi_0^{-1}-2)-\overrightarrow{P}^2\Pi_0^{-1}$.

At the end, let's write explicitly Yetter-Drinfeld structure:
\begin{eqnarray}
    &\Xi_L\upsilon_0=\Pi_0\otimes\upsilon_0,\qquad\qquad\Xi_L\upsilon_i=1\otimes\upsilon_i+P_i\otimes\upsilon_0,\\
    &\Xi_L\upsilon_C=\Pi_0^{-1}\otimes\upsilon_C+\big(\kappa(\Pi_0-\Pi_0^{-1})-\overrightarrow{P}^2\Pi_0^{-1}\big)\otimes\upsilon_0-2\sum_jP_j\Pi_0^{-1}\otimes\upsilon_j,
\end{eqnarray}
and all non-trivial ($a\triangleright\upsilon\neq\upsilon\varepsilon a$) actions take a form:
\begin{eqnarray}
   M_j\blacktriangleright\upsilon_k=i\epsilon_{jkl}\upsilon_l,\quad N_j\blacktriangleright\upsilon_0=-\frac{i}{\kappa}\upsilon_j,\quad N_j\blacktriangleright\upsilon_k=\delta_{jk}\left(\frac{i}{2\kappa}\upsilon_C-i\kappa\upsilon_0\right).
\end{eqnarray}
\begin{rrm}
	Let's note that for undeformed Poincer\'e Hopf we have four-dimensional Y-D modules, generated by Lie ideal of translations, however, in a limit we would five-dimensional Y-D module generated by mass Cassimir $\mathcal{L}<\overline{C}>=\mathrm{span}\{\overline{P_0},\overline{P_i},\overline{C}\}$.
\end{rrm}

\begin{rrm}
	Alongside one-dimensional right-covariant calculi, we have three-dimensional calculi generated by $\mathrm{Span}\{\overline{P_i\Pi_0^{-1}}\}$ and $\mathrm{Span}\{\overline{M_i}\}$.
\end{rrm}

\begin{rrm}
	One could see that the additional element in the 5-dimensional  cocalculus is the Casimir operator connected to the mass. There is a question whether the addition of a second Casimir\footnote{It is a square of Pauli-Lubański pseudovector.} will result with 12-dimensional cocalculus. Quantum version of the second Casimir has a rather complicated form  (c.f. \cite{Tolstoi}), and we didn't find a minimal Y-D module with this operator. However, there is only one term proportional to $N^2P^2$, and we can use the left coaction to prove that such a Y-D module has to contain the element $\overline{\Pi_0^2}$. We can find the minimal cocalculus with $\overline{\Pi_0^2}$ and we know that it would be Y-D submodule of the one with the second Casimir.
	\begin{eqnarray}
		\mathcal{L}(\overline{\Pi_0^2})&=&\mathrm{Span}\{\overline{\Pi_0^2},\,\overline{\Pi_0P_i},\,\overline{P_iP_j},\,\overline{C_\kappa P_i},\,\overline{C_\kappa G_\kappa}\}.
	\end{eqnarray}
	The $\dim\mathcal{L}(\overline{\Pi_0^2})=14$, so the minimal cocalculus that contain the second Casimir will have even higher dimensionality.
\end{rrm}
 We underline that $\kappa$-deformations \cite{kappa} are one of the most studied in the current literature  and have rich applications in physical models built upon deformed symmetries, see e.g. \cite{QGphenomenology,Lukierski2017,Gulia,Fermi}.
FODCs on the matrix $\kappa$-Poincar\'e Hopf algebra have been previously studied in \cite{Kosinski,Przanowski}, and in more general context in \cite{Aschieri}.

 \subsection*{Acknowledgements}
This article is based upon work from COST Action 
CaLISTA CA21109 supported by COST (European Cooperation in Science and Technology) www.cost.eu.\\
Both authors are supported by the project UMO-2022/45/B/ST2/01067 from the Polish National Science Center (NCN). AB appreciate the discussion with R. O. Buachalla  and J. Benner during the Corfu Cost Action CaLISTA General Meeting 2025   
 \textit{ Cartan, Generalised and Noncommutative Geometries, Lie Theory and Integrable Systems Meet Vision and Physical Models} (14-22 Sept. 2025) \cite{Corfu25}. \\ 

 \appendix
\numberwithin{equation}{section} 
\setcounter{section}{0} 

\section{Canonical dualities: coalgebra-algebra and comodule-module correspondence }
\label{appendix:algebra}

For more details, see e.g.,  \cite[Ch. 2]{Sweedler69}, or \cite{BrzezinskiWisbauer,Radford} for a more general setting.  Let $(\mathcal{C},\Delta,\varepsilon)$ be a coalgebra and $M$ right or left comodule. The fundamental theorem on coalgebras tells us that every element $m\in M$ generates some finite-dimensional subcoalgebra. The dual vector space  $(\mathcal{C}^*,\Delta^*,\varepsilon)$, with a multiplication map being the transpose map $\Delta^*$ restricted to the subspace
$\mathcal{C}^*\otimes\mathcal{C}^*$
 \footnote{In fact, $\Delta^*: (\mathcal{C}\otimes \mathcal{C})^*\rightarrow \mathcal{C}^* $, but one can define the multiplication as its restriction to the dense subspace $\mathcal{C}^*\otimes\mathcal{C}^*\subset  (\mathcal{C}\otimes \mathcal{C})^*$.  Of course, this is not needed in a finite dimensional case,  when $\mathcal{C}^*\otimes\mathcal{C}^*\,=\,  (\mathcal{C}\otimes \mathcal{C})^*$. For similar reasons dual of multiplication map is not a comultiplication, except a finite-dimensional case.}, bears a natural unital algebra structure with the so-called convolution product $(\alpha\star\beta) (c)=\alpha(c_{(1)})\beta(c_{(2)})$, and with epsilon being a unit element $\alpha\star\varepsilon=\varepsilon\star\alpha=\alpha$. $\mathcal{C}^*$ is commutative if and only if $\mathcal{C}$ is cocommutive.
 Every $\mathcal{C}$-left (resp. right) comodule $M\in {}^\mathcal{C} \mathfrak{M}$  (resp. $M\in\mathfrak{M}^\mathcal{C}$ ) becomes automatically right (resp. left) module over the algebra $\mathcal{C}^*$, i.e. $M\in \mathfrak{M}_{\mathcal{C}^*}$ (resp. $M\in {}_{\mathcal{C}^*}\mathfrak{M}$),  by
  $m\star\alpha\doteq m_{<0>}\alpha(m_{(-1)})$ (resp. $\alpha\star m\doteq m_{<0>}\alpha(m_{(1)})$). 
  Even more, the subspace $M_1\subset M$ is a subcomodule  if and only if it is a right $\mathcal{C}^*$-submodule. Similarly, all (sub)bicomodules  become equivalent to (sub)bimodules over $\mathcal{C}^*$ by
  \begin{equation}\label{a1}
  	\alpha\star m\star\beta\doteq(\beta\otimes\mathrm{id}\otimes\alpha)\circ {}_L \Delta_{R}(m)=m_{<0>}\alpha(m_{(1)})\beta(m_{(-1)})\,.
  \end{equation}
In particular, the coalgebra itself $ \mathcal{C}\in 
{}_{\mathcal{C}^*} \mathfrak{M}_{\mathcal{C}^*}$  
by $\alpha\star c\star\beta= \beta(c_{(1)})\,c_{(2)}\,\alpha(c_{(3)})$, and 
$$\Delta(\alpha\star c\star\beta)= ( c_{(1)}\star\beta )\,\otimes\, (\alpha\star c_{(2)})\,.$$
However, it should be noted that this correspondence does not work in the opposite direction: not all $\mathcal{C}^*$-modules correspond to  $\mathcal{C}$-comodules, in general. If this is the case, they are called rational modules.

There is a canonical pairing $(\mathcal{C}, \mathcal{C}^*, <\cdot,\cdot>)$.
\begin{equation}\label{a2}
	\left<c, \alpha\right>\doteq \alpha(c)
\end{equation}
given by the evaluation map with the properties
\begin{equation}\label{a3}
	\left<c,\alpha\star\beta\right>  =	\left<\Delta(c),\alpha\otimes\beta\right> = \left<\beta\star c,\alpha\right>=\left<c\star\alpha,\beta\right>\,.
\end{equation}
For any coalgebra morphism $\phi: \mathcal{C}\rightarrow\mathcal{D}$ the transpose map $\phi^*:\mathcal{D}^*\rightarrow\mathcal{C}^*$ provides an algebra map  
\begin{equation}\label{a4}
\phi^*(\omega)(c)\doteq\omega(\phi(c))\,,\quad \phi^*(\omega_1\star\omega_2) = \phi^*(\omega_1)\star\phi^*(\omega_2)\,,
\end{equation}
for $\omega,\omega_1,\omega_2\in \mathcal{D}^*$, $c\in\mathcal{C}$. In particular, $\phi\in \mathrm{Aut}(\mathcal{C})$ implies $\phi^*\in \mathrm{Aut}(\mathcal{C}^*)$ and $(\phi^*)^{-1}=(\phi^{-1})^*$.

Further, for any $\mathcal{C}$-left comodule $M\in {}^\mathcal{C} \mathfrak{M}$ (resp. right $M\in \mathfrak{M}^\mathcal{C}$ or bicomodule $M\in
{}^\mathcal{C} \mathfrak{M}^\mathcal{C}$) its dual vector space bears a canonical left $\mathcal{C}^*$-module structure, i.e.  $M^*\in {}_{\mathcal{C}^*} \mathfrak{M}$ (resp. right $M^*\in \mathfrak{M}_{\mathcal{C}^*}$ or bicomodule $M^*\in
{}_{\mathcal{C}^*} \mathfrak{M}_{\mathcal{C}^*}$), where $(\alpha\star\Gamma)(m)=\alpha(m_{(-1)})\Gamma(m_{<0>})=\Gamma(m\star\alpha)$, for any $m\in M, \alpha\in \mathcal{C}^*, \Gamma\in M^*$, etc..

As already mentioned in the footnote, a revers duality: algebra-coalgebra and module-comodule, except the finite-dimensional case, requires some special assumptions, e.g. topological ones, see e.g. \cite{Klimyk}. In fact for, given multiplication map $\mu: \mathcal{A}\otimes \mathcal{A}\rightarrow \mathcal{A}$ one can always choose a maximal subspace $\mathcal{A}^\circ\subset\mathcal{A}^*$ such that $\mu^*:\mathcal{A}^\circ\rightarrow  \mathcal{A}^\circ\otimes \mathcal{A}^\circ$ . In practice, it is convenient to consider dual (nondegenerate) pairing $(\mathcal{C}, \mathcal{A}, <\cdot,\cdot>)$ between an algebra $\mathcal{A}$ and the coalgebra $\mathcal{C}$.   
This  is equivalent to the existence of an injective unital algebras morphism 
 \begin{equation}\label{a5}
 	w: \mathcal{A}\rightarrow \mathcal{C}^*
 \end{equation}
 such that $\left<c,x\right>= w(x)(c)$. In addition, for each $x\in\mathcal{A}$ there is $c\in \mathcal{C}$ such that $\left<c,x\right>\neq 0$. One may also consider an associated dual pairing $(\Upsilon,M, <|\cdot,\cdot |> )$, where  $\Upsilon\in {}^\mathcal{C} \mathfrak{M}$ and $M\in {}_{\mathcal{A}} \mathfrak{M}$. Equivalently, one may assume that there exists an injective unital modules morphism
 \begin{equation}\label{a6}
 	\hat{w}: M\rightarrow \Upsilon^*
 \end{equation}
 such that
 \begin{equation}\label{a7}
  <|\upsilon,m |> =\hat{w}(m)(\upsilon)\,,\quad  <|\upsilon,\alpha\star m |> = 
   <\upsilon_{(-1)},\alpha><| \upsilon_{<0>},m |>\,  
 \end{equation}
as well as the appropriate nondegeneracy condition has to be satisfied. 
It is clear that for any finite family of nondegenerate dual pairing of vector spaces $(V_i,W_i, <,>_i)$, $i\in I=\{1,\ldots n\}$ their corresponding tensor products
$(\otimes_{i\in I}V_i, \otimes_{i\in I}W_i, <,>)$ form a nondegenerate dual pair by
\begin{equation}\label{a8   }
	<v_1\otimes\ldots\otimes v_n,w_1\otimes\ldots\otimes w_n>\doteq <v_1,w_1>_1\ldots <v_n,w_n>_n\,.
\end{equation}

 \section{Hopf modules and (bicovariant) bimodules }

Following \cite{Woronowicz,Klimyk,Schauenburg} we recall basic facts about Hopf bimodules (known also as bicovariant bimodules after Woronowicz \cite{Woronowicz}) . 
Let $H=(H, \Delta, \mu=\cdot, 1=1_H, \varepsilon_H=\varepsilon, S=S_H)$ be a Hopf algebra (we do not assume here that the antipod $S$ is bijective).
\begin{defi} \footnote{We can also define a left covariant bicomodule as an object $(M,\triangleright,\Delta_L,\Delta_R)\in{}^H_H\mathfrak{M}^H$.}
	Bicovariant bimodule over $H$   is an object $(M,\triangleright,\triangleleft,\Delta_L,\Delta_R)\in{}^H_H\mathfrak{M}^H_H$ such that:
	
	\begin{itemize}
		 
		\item $(M,\Delta_L,\Delta_R)\in{}^H\mathfrak{M}^H$ is a bicomodule,  $(M,\triangleright,\triangleleft)\in{}_H\mathfrak{M}_H$ is a bimodule,
		and the following compatibility conditions are satisfied
		\begin{equation}\label{b1}
				\Delta_L(a\triangleright m\triangleleft b)= (a_{(1)}\cdot m_{(-1)}\cdot b_{(1)})\otimes(a_{(2)}\triangleright m_{<o>}\triangleleft b_{(2)})\,,
			\end{equation}
	and ($1\triangleright m=m\triangleleft 1=m$)
	\begin{equation}\label{b2}
			\Delta_R(a\triangleright m\triangleleft b)= (a_{(1)}\triangleright m_{<o>}\triangleleft b_{(1)})\otimes (a_{(2)}\cdot m_{(1)}\cdot b_{(2)})\,.
		\end{equation}

	\end{itemize}
\end{defi}
The structure theorem for such objects indicates many important properties:
 \begin{itemize}
	\item $(M,\triangleright,\Delta_L)\in{}^H_H\mathfrak{M}$ is a left Hopf module. Therefore, it is left free and left cofree, i.e. 
	$M\cong H\otimes P_L(M)$, where the vector space $P_L(M)\doteq \mathrm{Im}P_L$, $P_L^2=P_L$ is a projector operator defined by $P_L(m)=S(m_{(-1)})\triangleright m_{<o>}$. Its image coincides with the vector space of left coinvariants $\mathrm{Im}P_L={}^{\mathrm{coH}}M\doteq\{ v\in M|\Delta_L(v)=1\otimes v\}\subset M$. Thus $P_L(a\triangleright m)=\varepsilon(a)P_L(m)$. The maps $\Phi_L: H\otimes \mathrm{Im}P_L \rightarrow M\,, \Phi_L (a\otimes v)=a\triangleright v$ and its inverse $\Phi_L^{-1}(m)= m_{(-1)}\otimes P_L(m_{<0>})= m_{(-2)}\otimes S( m_{(-1)})\triangleright  m_{<0>}$ are left Hopf modules  isomorphisms. 
	
	\item Since $(M\simeq H\otimes \mathrm{Im}P_L, \triangleleft, \Delta_R) \in\mathfrak{M}^H_H$ is also right Hopf module, this causes that the vector space $\mathrm{Im}P_L$ possesses induced a right comodule and a right module structures, i.e. $(\mathrm{Im}P_L, \Xi_R, \blacktriangleleft)$:
	\begin{equation}\label{b3}
		v	\blacktriangleleft a\doteq P_L(v\triangleleft a)=S(a_{(1)})\triangleright v\triangleleft a_{(2)}=
		\mathrm{Ad}^{\triangleright\triangleleft}_{R\,a} (v)\,,
	\end{equation}
	and
	\begin{equation}\label{b4}
		\Xi_R(v)\equiv v_{<0>}\otimes v_{(1)}\doteq (\varepsilon\otimes \mathrm{id}\otimes  \mathrm{id})\circ \Delta_R (1\otimes v)\,,
	\end{equation}
	 which have to satisfy the so-called Yetter-Drinfeld (or crossed module) compatibility condition \cite{Yetter}:
	 \begin{equation}\label{b5}
	  v_{<0>}\blacktriangleleft a_{(1)}\otimes v_{(1)} a_{(2)}=(v\blacktriangleleft a_{(2)})_{<0>}\otimes  a_{(1)}(v\blacktriangleleft a_{(2)})_{(1)} \,.
	 \end{equation}
	 We say that $(\mathrm{Im}P_L,\Xi_R, \blacktriangleleft )\in \mathfrak{YD}^H_H$  is a right-right Yetter-Drinfeld (or crossed) module. \footnote{This indicates that Yetter-Drinfeld modules could be defined in more general settings over bialgebras.}. However, in the case of Hopf algebras the last condition can be equivalently replaced by the more convenient one
	 \begin{equation}\label{b6}
	 	\Xi_R(v\blacktriangleleft a)= v_{<0>}\blacktriangleleft a_{(2)}\otimes S(a_{(1)})v_{(1)}a_{(3)}\,. 
	 \end{equation}
	\item Conversely, assuming above condition, i.e., $(\mathcal{R},\Xi_R, \blacktriangleleft )\in \mathfrak{YD}^H_H$ then $M=H\otimes \mathcal{R}$ becomes automatically Hopf bimodule if we define
	\begin{equation}\label{b7}
		\Delta_L(a\otimes v)=a_{(1)}\otimes a_{(2)}\otimes v\,, \quad \Delta_R(a\otimes v)=a_{(1)}\otimes v_{<0>}\otimes a_{(2)}v_{(1)}\,,
	\end{equation}
	and
	\begin{equation}\label{b8}
		a\triangleright (b\otimes v)\triangleleft c= abc_{(1)}\otimes (v\blacktriangleleft c_{(2)})\,.\end{equation}
We should noticed that the left coaction and  the left action are now free.
Of course, $\mathcal{R}$ is unique up to right-right Yetter-Drinfeld modules isomorphism. In particular $\mathrm{Im}P_L\simeq \mathcal{R}$ in $\mathfrak{YD}^H_H$. Therefore $\Phi_L: H\otimes \mathrm{Im}P_L \rightarrow H\otimes \mathcal{R}$ becomes a Hopf bimodules isomorphism.
	
	\item Similarly, one can introduce the map $\Phi_R: \mathrm{Im}P_R\otimes H\rightarrow M\simeq \mathcal{L}\otimes H\,, \Phi_R (u\otimes a)=u\triangleleft a$ and its inverse $\Phi_R^{-1}(m)=  P_R(m_{<0>})\otimes m_{(1)}= m_{<0>}\triangleleft  S( m_{(1)})\otimes m_{(2)}$, are left module morphisms, where  $\mathcal{L}\simeq\mathrm{Im}P_R= M^\mathrm{coH}\doteq\{ u\in M|\Delta_R(u)=u\otimes 1\}$ can be equipped in induced left-left Yetter-Drinfeld structure, i.e., $(\mathcal{L}, \Xi_L, \blacktriangleright)\in {}^H_H\mathfrak{YD}$, satisfying  \footnote{ Or on bialgebra level: $\ 
		 a_{(1)}u_{(-1)} \otimes 	a_{(2)} \blacktriangleright   u_{<0>}=
		  (a_{(1)}\blacktriangleright  u)_{(-1)}a_{(2)}\otimes (a_{(1)}\blacktriangleright  u)_{<0>} \,.
	$}
	\begin{equation}\label{b9}
		\Xi_L(a\blacktriangleright u)= a_{(1)}u_{(-1)}S(a_{(3)})\otimes a_{(2)}\blacktriangleright u_{<o>}
	\end{equation}
	where
	\begin{equation}\label{b10}
		a\blacktriangleright u\doteq P_R(a\triangleright u)=a_{(1)}\triangleright u\triangleleft S(a_{(2)})=
		\mathrm{Ad}^{\triangleright\triangleleft}_{L\,a} (u)\,.
	\end{equation}
	Thus $M\simeq \mathcal{L}\otimes H$ has the following Hopf bimodule structure
	\begin{equation}\label{b11}
		\Delta_L(u\otimes a)= u_{(-1)}a_{(1)}\otimes u_{<0>}\otimes a_{(2)}\,, \quad \Delta_R(u\otimes a)=u\otimes a_{(1)}\otimes  a_{(2)}\,,
	\end{equation}
	and
	\begin{equation}\label{b12}
		a\triangleright (u\otimes b)\triangleleft c=   (a_{(1)}\blacktriangleright u)\otimes  a_{(2)}bc \,.
	\end{equation}
	We should noticed that now the right coaction and  the right action are free.

\end{itemize}

\section{Woronowicz's bicovariant FODC on Hopf algebras}
Any Hopf algebra $H$, can be consider as a right-right (or left-left) Yetter-Drinfeld module in two nonisomorphic ways:

i) with the adjoint action and the standard coaction
\begin{equation}\label{c1}
	b	\blacktriangleleft a\doteq S(a_{(1)}) b a_{(2)}\,,\quad \Xi_R(a)\doteq\Delta(a)=a_{(1)}\otimes a_{(2)}\,;
\end{equation}
(or 	$a	\blacktriangleright b\doteq a_{(1)}) b S(a_{(2)})\,,\quad \Xi_L(a)\doteq\Delta(a)=a_{(1)}\otimes a_{(2)}$)\\
ii) with standard multiplication and a coadjoint action
\begin{equation}\label{c2}
	b\blacktriangleleft a\doteq b a\,,\quad \Xi_R(a)\doteq a_{(2)} \otimes S(a_{(1)}) a_{(3)}\,.
\end{equation}
(or $a\blacktriangleright b\doteq a b\,,\quad \Xi_L(a)\doteq a_{(1)}S( a_{(3)})\otimes  a_{(2)}$)\\
 In the first case 
$\bar H=H/\mathbb{K}1_H$ becomes a Y-D factor module by the trivial Y-D submodule $\mathbb{K}1_H$. Its Y-D submodules $\Upsilon\subset \bar H$ generate bicovariant FOCCs considered in Section 3.  In fact, the universal bicomodule $\Upsilon^U_H\simeq H\otimes \bar H\supset H\otimes\Upsilon$ is bicovariant.

In the second case, being a cornerstone of Woronowicz theory \cite{Woronowicz2},  $\ker\varepsilon_H\subset H\in \mathfrak{YD}^H_H$ is a Y-D submodule of $H$. Consequently, $\ker\varepsilon_H\otimes H\simeq \ker\mu\simeq H\otimes\ker\varepsilon\in {}^H_H\mathfrak{M}^H_H$ is a bicovariant bimodule of the universal one-forms. Therefore, any other bicovariant bimodule of one-forms can be be obtained as quotient $\ker\mu/ N$, where $N$ is a bicovariant subbimodule of $\ker\mu$. More precisely, according Woronowicz prescription, firstly one chooses a Y-D submodule $R\subset \ker\varepsilon$ and then we set $N=r^{-1}(H\otimes R)$. Thus $\ker\mu/ N\simeq H\otimes \ker\varepsilon/R$. The isomorphism is realized by Woronowicz map $r(a\otimes b)=a b_{(1)}\otimes b_{(2)}$.  Moreover, any bicovariant FODC on $H$ can be obtained in this way.    
 Obviously, there is also left-handed version of this provided by the Woronowicz map  $s'(a\otimes b)= a_{(1)}\otimes a_{(2)}b$; $N=s'^{-1}(L\otimes H)$. 
 \begin{rrm}
 Consequently, the vector space $H\otimes H$ enjoys two nonisomorphic Hopf bimodule structures which are dual each other (c.f. Section 4.2):
 
 i) with the actions $a\triangleright (b\otimes c)\triangleleft d=a_{(1)}bd_{(1)}\otimes a_{(2)}cd_{(2)}$ and the coactions\\
 $\Delta_L (a\otimes b) =a_{(1)}\otimes (a_{(2)}\otimes b)\,,\quad \Delta_R (a\otimes b)= (a\otimes   b_{(1)})\otimes  b_{(2)}\,$ (c.f. \eqref{DeltaU}, \eqref{triangle-actions} and Section 3).
 
 ii)  with the actions $a\triangleright (b\otimes c)\triangleleft d=ab\otimes cd$ and the coactions\\
 $\Delta_L (a\otimes b) =a_{(1)}b_{(1)}\otimes (a_{(2)}\otimes b_{(2)})\,,\quad \Delta_R (a\otimes b)= (a_{(1)}\otimes   b_{(1)})\otimes a_{(2)} b_{(2)}\,$ (c.f. Woronowicz \cite{Woronowicz}). 
 In this case $\ker\mu$ is a bicovariant subbimodule of $H\otimes H$.
 
 \end{rrm}
%

%
 \section{Proof of the formulas \eqref{E1}-\eqref{E2}}
\begin{proof}
          One calculates  :
         \begin{eqnarray}
             \Delta^U_L(\upsilon^n)&=&\sum_{i=0}^{n-1}\left(1-\frac{i}{n}\right)\sum_{j=0}^{n-i}X^j\otimes[X^{n-i-j}\otimes X^i]\nonumber\\
             &=&\sum_{j=0}^{n}X^j\otimes\sum_{i=0}^{n-j}\left(1-\frac{i}{n}\right)[X^{n-i-j}\otimes X^i]-\sum_{j=0}^n\frac{j}{n}X^j\otimes[\Delta (X^{n-j})]\nonumber\\
             &=&\sum_{j=0}^{n}X^j\otimes\sum_{i=0}^{n-j}\left(1-\frac{i+j}{n}\right)[X^{n-i-j}\otimes X^i]\nonumber\\
             &=&\sum_{j=0}^{n}\left(1-\frac{j}{n}\right)X^j\otimes\sum_{i=0}^{n-j-1}\left(1-\frac{i}{n-j}\right)[X^{n-i-j}\otimes X^i]\nonumber\\
             &=&\sum_{j=0}^{n}\left(1-\frac{j}{n}\right)X^j\otimes\upsilon^{n-j}\,,
               \end{eqnarray}
             \begin{eqnarray}
             \Delta^U_R(\upsilon^n)&=&\sum_{i=0}^{n-1}\left(1-\frac{i}{n}\right)\sum_{j=0}^{i}[X^{n-i}\otimes X^{i-j}]\otimes X^{j}\nonumber\\
             &=&\sum_{j=0}^{n}\left(\sum_{i=j}^{n}\left(1-\frac{i}{n}\right)[X^{n-i}\otimes X^{i-j}]\right)\otimes X^j\nonumber\\
             &=&\sum_{j=0}^{n}\left(\sum_{i=0}^{n-j}\left(1-\frac{i+j}{n}\right)[X^{n-i-j}\otimes X^{i}]\right)\otimes X^j\nonumber\\
             &=&\sum_{j=0}^{n}\left(1-\frac{j}{n}\right)\upsilon^{n-j}\otimes X^j\,.
         \end{eqnarray}
         In particular, it shows that $\Delta^U_L(v^n) =\Delta^{U\tau}_R(v^n)$ for any $n\geq 1$.
     \end{proof}
\section{The proof of Theorem 3}
 \begin{proof}
 	It remains to prove three generalized Jacobi identities, \eqref{qJ1}, \eqref{qJ2}, \eqref{qJ3}.  For better presentation, we temporarily simplify the notation:
\begin{eqnarray}
	 \mathrm{ad}^L\equiv \mathrm{ad}\,,\quad \tau_q\equiv\tau\,,\quad [\,,]_q\equiv C\,.
\end{eqnarray}

The first deformed Jacobi identity, c.f. \eqref{qJ1}:
\begin{eqnarray}
	(C\otimes\mathrm{id})=C(\mathrm{id}\otimes C)((\mathrm{id}-\tau)\otimes\mathrm{id}).
\end{eqnarray}
We compute:
\begin{eqnarray}
	C(C\otimes\mathrm{id})(\overline{X}\otimes\overline{Y}\otimes\overline{Z})&=&C(\mathrm{ad}_{\delta(\overline{X})}\overline{Y}\otimes\overline{Z})=\mathrm{ad}_{\delta(\mathrm{ad}_{\delta(\overline{X})}\overline{Y})}\overline{Z}\nonumber\\
	&=&\mathrm{ad}_{\mathrm{ad}_XY-Y\varepsilon(X)}\overline{Z},\\
	C(\mathrm{id}\otimes C)(\overline{X}\otimes\overline{Y}\otimes\overline{Z})&=&C(\overline{X}\otimes\mathrm{ad}_{\delta(\overline{Y})}\overline{Z})=\mathrm{ad}_{\delta(\overline{X})\delta(\overline{Y})}\overline{Z}\nonumber\\
	&=&\mathrm{ad}_{XY-X\varepsilon(Y)-Y\varepsilon(X)-\varepsilon(XY)},\\
	C(\mathrm{id}\otimes C)(\mathrm{ad}_{X_{(1)}}\overline{Y}\otimes\overline{X_{(2)}}\otimes\overline{Z})&=&C(\mathrm{ad}_{X_{(1)}}\overline{Y}\otimes\mathrm{ad}_{\delta(\overline{X}_{(2)})}\overline{Z})\nonumber\\
	=\mathrm{ad}_{\delta(\mathrm{ad}_{X_{(1)}}\overline{Y})\delta(\overline{X_{(2)}})}\overline{Z}
	&=&\mathrm{ad}_{XY-\mathrm{ad}_XY-X\varepsilon(Y)-\varepsilon(XY)}\overline{Z},\\
	C(\mathrm{id}\otimes C)((\mathrm{id}-\tau)\otimes\mathrm{id})(\overline{X}\otimes\overline{Y}\otimes\overline{Z})&=&\mathrm{ad}_{\mathrm{ad}XY-Y\varepsilon(X)} Z
\end{eqnarray} 
The second deformed Jacobi identity, c.f. \eqref{qJ2}:
\begin{eqnarray}
	\tau(\mathrm{id}\otimes C)=(C\otimes\mathrm{id})(\mathrm{id}\otimes\tau)(\tau\otimes\mathrm{id})
\end{eqnarray}
One calculates:
\begin{eqnarray}
	\tau(\mathrm{id}\otimes C)(\overline{X}\otimes\overline{Y}\otimes\overline{Z})&=&\tau(\overline{X}\otimes\mathrm{ad}_{\delta(\overline{Y})}\overline{Z})=\mathrm{ad}_{X_{(1)}\delta(\overline{Y})}\overline{Z}\otimes\overline{X_{(2)}},\\
	(C\otimes\mathrm{id})(\mathrm{id}\otimes\tau)(\tau\otimes\mathrm{id})(\overline{X}\otimes\overline{Y}\otimes\overline{Z})&=&(C\otimes\mathrm{id})(\mathrm{id}\otimes\tau)(\mathrm{ad}_{X_{(1)}}\overline{Y}\otimes\overline{X_{(2)}}\otimes\overline{Z})\nonumber\\
	=(C\otimes\mathrm{id})(\mathrm{ad}_{X_{(1)}}\overline{Y}\otimes\mathrm{ad}_{X_{(2)}}\overline{Z}\otimes\overline{X}_{(3)})
	&=&\mathrm{ad}_{\delta(\mathrm{ad}_{X_{(1)}}\overline{Y})X_{(2)}}\overline{Z}\otimes\overline{X_{(3)}}\nonumber\\
	&=&\mathrm{ad}_{X_{(1)}\delta(\overline{Y})}\overline{Z}\otimes\overline{X_{(2)}}.
\end{eqnarray}

The last deformed Jacobi identity, c.f. \eqref{qJ3}:
\begin{eqnarray}
	\tau(C\otimes\mathrm{id})-(\mathrm{id}\otimes C)(\tau\otimes\mathrm{id})(\mathrm{id}\otimes\tau)=(C\otimes\mathrm{id})(\mathrm{id}\otimes\tau)((\mathrm{id}-\tau^2)\otimes\mathrm{id})
\end{eqnarray}
We compute:
\begin{eqnarray}
	\tau(C\otimes\mathrm{id})(\overline{X}\otimes\overline{Y}\otimes\overline{Z})&=&\tau(\mathrm{ad}_{\delta(\overline{X})}\overline{Y}\otimes\overline{Z})\nonumber\\
	&=&\mathrm{ad}_{X_{(1)}Y_{(1)}S(X_{(3)})}\overline{Z}\otimes\mathrm{ad}_{X_{(2)}}\overline{Y_{(2)}}-\mathrm{ad}_{Y_{(1)}}\overline{Z}\otimes\overline{Y_{(2)}}\varepsilon(X)\\
	(\mathrm{id}\otimes C)(\tau\otimes\mathrm{id})(\mathrm{id}\otimes\tau)(\overline{X}\otimes\overline{Y}\otimes\overline{Z})&=&(\mathrm{id}\otimes C)(\tau\otimes\mathrm{id})(\overline{X}\otimes\mathrm{ad}_{Y_{(1)}}\overline{Z}\otimes\overline{Y_{(2)}})\nonumber\\
	=(\mathrm{id}\otimes C)(\mathrm{ad}_{X_{(1)}Y_{(1)}}\overline{Z}\otimes\overline{X_{(2)}}\otimes\overline{Y_{(2)}})
	&=&\mathrm{ad}_{X_{(1)}Y_{(1)}}\overline{Z}\otimes\mathrm{ad}_{\delta(\overline{X_{(2)}})}\overline{Y_{(2)}}\nonumber\\
	&=&\mathrm{ad}_{X_{(1)}Y_{(1)}}\overline{Z}\otimes\mathrm{ad}_{X_{(2)}}\overline{Y_{(2)}}-\mathrm{ad}_{XY_{(1)}}\overline{Z}\otimes\overline{Y_{(2)}},\\
	(C\otimes\mathrm{id})(\mathrm{id}\otimes\tau)(\overline{X}\otimes\overline{Y}\otimes\overline{Z})&=&(C\otimes\mathrm{id})(\overline{X}\otimes\mathrm{ad}_{Y_{(1)}}\overline{Z}\otimes\overline{Y_{(2)}})\nonumber\\
	=\mathrm{ad}_{\delta(\overline{X})Y_{(1)}}\overline{Z}\otimes\overline{Y_{(2)}}
	&=&\mathrm{ad}_{XY_{(1)}}\overline{Z}\otimes\overline{Y_{(2)}}-\mathrm{ad}_{Y_{(1)}}\overline{Z}\otimes\overline{Y_{(2)}}\varepsilon(X),\\
	(C\otimes\mathrm{id})(\mathrm{id}\otimes\tau)(\tau^2\otimes\mathrm{id})(\overline{X}\otimes\overline{Y}\otimes\overline{Z})&=&(C\otimes\mathrm{id})(\mathrm{id}\otimes\tau)(\tau\otimes\mathrm{id})(\mathrm{ad}_{X_{(1)}}\overline{Y}\otimes\overline{X_{(2)}}\otimes\overline{Z})\nonumber\\
	&=&(C\otimes\mathrm{id})(\mathrm{id}\otimes\tau)(\mathrm{ad}_{X_{(1)}Y_{(1)}}\overline{S^2(X_{(3)})}\otimes\mathrm{ad}_{X_(2)}\overline{Y}\otimes\overline{Z}\nonumber\\
	&=&(C\otimes\mathrm{id})(\mathrm{ad}_{X_{(1)}Y_{(1)}}\overline{S^2}(X_{(5)})\otimes\mathrm{ad}_{X_{(2)}Y_{(2)}S(X_{(4)})}\overline{Z}\otimes\mathrm{ad}_{X_{(3)}}\overline{Y_{(3)}}\nonumber\\
	&=&\mathrm{ad}_{\delta(\mathrm{ad}_{X_{(1)}Y_{(1)}}\overline{S^2}(X_{(5)}))X_{(2)}Y_{(2)}S(X_{(4)})}\overline{Z}\otimes\mathrm{ad}_{X_{(3)}}\overline{Y_{(3)}}\nonumber\\
	&=&\mathrm{ad}_{X_{(1)}Y_{(1)}}\overline{Z}\otimes\mathrm{ad}_{X_{(2)}}\overline{Y_{(2)}}-\mathrm{ad}_{X_{(1)}Y_{(1)}S(X_{(3)})}\overline{Z}\otimes\mathrm{ad}_{X_{(2)}}\overline{Y_{(2)}},\nonumber\\
\end{eqnarray}
This calculations  allow us to conclude  that  $E11-E12=E13-E14$, i.e. the identity is shown.
\end{proof}


\section{Quantum Lie algebra structure for four-dimensional FOCC over $U_q(\mathfrak{sl}(2))$}
\begin{eqnarray}
    \tau_q(\upsilon_{00}\otimes\upsilon_{00})=\upsilon_{00}\otimes\upsilon_{00} &\quad& \tau_q(\upsilon_{00}\otimes\upsilon_{10})=q^2\upsilon_{10}\otimes\upsilon_{00}\nonumber\\
    \tau_q(\upsilon_{00}\otimes\upsilon_{11})=\upsilon_{11}\otimes\upsilon_{00} && \tau_q(\upsilon_{00}\otimes\upsilon_{01})=q^{-2}\upsilon_{01}\otimes\upsilon_{00}
\end{eqnarray}
\begin{eqnarray}
    \tau_q(\upsilon_{10}\otimes\upsilon_{00})=\upsilon_{00}\otimes\upsilon_{10}+(1-q^2)\upsilon_{10}\otimes\upsilon_{00} &\quad& \tau_q(\upsilon_{10}\otimes\upsilon_{10})=\upsilon_{10}\otimes\upsilon_{10}\nonumber\\
    \tau_q(\upsilon_{10}\otimes\upsilon_{11})=\upsilon_{11}\otimes\upsilon_{10}-(q^3+q)\upsilon_{10}\otimes\upsilon_{00}&&\tau_q(\upsilon_{10}\otimes\upsilon_{01})=\upsilon_{01}\otimes\upsilon_{10}+\upsilon_{11}\otimes\upsilon_{00}\nonumber\\
\end{eqnarray}
\begin{eqnarray}
    \tau_q(\upsilon_{01}\otimes\upsilon_{00})=\upsilon_{00}\otimes\upsilon_{01}+(1-q^{-2})\upsilon_{01}\otimes\upsilon_{00} &\quad& \tau_q(\upsilon_{01}\otimes\upsilon_{10})=\upsilon_{10}\otimes\upsilon_{01}-\upsilon_{11}\otimes\upsilon_{00}\nonumber\\
    \tau_q(\upsilon_{01}\otimes\upsilon_{11})=\upsilon_{11}\otimes\upsilon_{01}+(q+q^{-1})\upsilon_{01}\otimes\upsilon_{00}&&\tau_q(\upsilon_{01}\otimes\upsilon_{01})=\upsilon_{01}\otimes\upsilon_{01}
\end{eqnarray}
\begin{eqnarray}
    \tau_q(\upsilon_{11}\otimes\upsilon_{00})&=&\upsilon_{00}\otimes\upsilon_{11}+(2-q^2-q^{-2})(\upsilon_{11}\otimes\upsilon_{00}+\upsilon_{01}\otimes\upsilon_{10}-\upsilon_{10}\otimes\upsilon_{01})\nonumber\\
    \tau_q(\upsilon_{11}\otimes\upsilon_{11})&=&\upsilon_{11}\otimes\upsilon_{11}+(q^{-1}-q^3)(\upsilon_{11}\otimes\upsilon_{00}+\upsilon_{01}\otimes\upsilon_{10}-\upsilon_{10}\otimes\upsilon_{01})\nonumber\\
    \tau_q(\upsilon_{11}\otimes\upsilon_{10})&=&q^{-2}\upsilon_{10}\otimes\upsilon_{11}+(1-q^{-2})\upsilon_{11}\otimes\upsilon_{10}+(q+q^{-1})\upsilon_{10}\otimes\upsilon_{00}\nonumber\\
    \tau_q(\upsilon_{11}\otimes\upsilon_{01})&=&q^2\upsilon_{01}\otimes\upsilon_{11}+(1-q^2)\upsilon_{11}\otimes\upsilon_{01}-(q^3+q)\upsilon_{01}\otimes\upsilon_{00}
\end{eqnarray}
\begin{eqnarray}
    &{}[\upsilon_{00},\upsilon_{00}]_q=0\qquad[\upsilon_{00},\upsilon_{11}]_q=0\qquad[\upsilon_{00},\upsilon_{10}]_q=(q^2-1)\upsilon_{10}\qquad[\upsilon_{00},\upsilon_{01}]_q=(q^{-2}-1)\upsilon_{01}\nonumber\\
    &{}[\upsilon_{10},\upsilon_{00}]_q=(1-q^2)\upsilon_{10}\qquad[\upsilon_{10},\upsilon_{11}]_q=-(q^3-q)\upsilon_{10}\qquad[\upsilon_{10},\upsilon_{10}]_q=0\qquad[\upsilon_{10},\upsilon_{01}]_q=\upsilon_{11}\nonumber\\
    &{}[\upsilon_{01},\upsilon_{00}]_q=(1-q^{-2})\upsilon_{01}\qquad[\upsilon_{01},\upsilon_{11}]_q=(q-q^{-1})\upsilon_{10}\qquad[\upsilon_{01},\upsilon_{10}]_q=-\upsilon_{11}\qquad[\upsilon_{01},\upsilon_{01}]_q=0\nonumber\\
    &{}[\upsilon_{11},\upsilon_{00}]_q=(2-q^2-q^{-2})\upsilon_{11}\qquad[\upsilon_{11},\upsilon_{11}]_q=(q-q^3)\upsilon_{11}\nonumber\\
    &{}[\upsilon_{11},\upsilon_{10}]_q=(q+q^{-1})\upsilon_{10}\qquad[\upsilon_{11},\upsilon_{01}]_q=-(q+q^3)\upsilon_{11}
\end{eqnarray}
In the classical limit $q\mapsto 1$, $\upsilon_{00}=\overline{K}\mapsto 0$, $\upsilon_{10} \mapsto E$, $\upsilon_{01} \mapsto F$ and $\upsilon_{11} \mapsto [E,F]=H$ one can recover $\mathfrak{sl}(2)$ relations as well as flip braiding $\tau_1(\upsilon_1\otimes\upsilon_2)= \upsilon_2\otimes\upsilon_1$.


\end{document}